%% file: main.tex
\documentclass[11pt]{article}
\usepackage[margin=1.05in]{geometry}

\usepackage[english]{babel}
\usepackage{mathtools,amssymb,amsthm}
\usepackage{booktabs}
\usepackage{placeins}
\usepackage{tikz}
\usepackage[authoryear,square]{natbib}
\setcitestyle{aysep={},notesep={, },semicolon}
\definecolor{linkblue}{RGB}{0,2,134}
\usepackage[
  colorlinks=true,
  linkcolor=linkblue,
  citecolor=linkblue,
  urlcolor=linkblue,
  pdftitle={A Complex Structure on S2 times S4},
  pdfauthor={Shengtao Guo; Ethan X. Fang; Junwei Lu},
  pdfsubject={Complex threefolds and six-manifold topology}
]{hyperref}

\title{\normalfont\LARGE A Complex Structure on $S^2\times S^4$}
\author{%
  Shengtao Guo \qquad
  Ethan X. Fang \qquad
  Junwei Lu
}
\date{}

\newtheorem{theorem}{Theorem}[section]
\newtheorem{proposition}[theorem]{Proposition}
\newtheorem{lemma}[theorem]{Lemma}
\theoremstyle{definition}

\theoremstyle{remark}

\numberwithin{equation}{section}

\newcommand{\PP}{\mathbb P}
\newcommand{\CC}{\mathbb C}
\newcommand{\RR}{\mathbb R}
\newcommand{\ZZ}{\mathbb Z}
\newcommand{\cO}{\mathcal O}
\newcommand{\dP}{\operatorname{dP}_6}

\begin{document}

\maketitle

\begin{abstract}
We show that $S^2\times S^4$ admits a complex structure. Starting from a modular family of complex
two-tori associated with the $(3,4,\infty)$ triangle group and the
compactification constructed in \citep{Alpoge2026}, we replace the period
lattice by its unique monodromy-invariant index-two superlattice and
compactify the resulting family.  In the integral Mayer--Vietoris
calculation, a primitive local class whose double is the class of a cusp
component and the unique nonzero torsion class contributed by the
multiplicity-four fibre restrict to the same class of order two on the
common boundary.  We then prove that the resulting compact complex
threefold is diffeomorphic to $S^2\times S^4$.
The proof is discovered by the Odin Automatic AI Research Agent.
\end{abstract}

\section{Introduction}\label{sec:introduction}

Calabi \citep{Calabi1958} observed that $V^2\times S^4$ is almost complex for every
closed orientable surface $V^2$, using an embedding in $\RR^7$, and
asked whether it admits a complex structure.  He contrasted this question
with the complex structures already known on certain products of
odd-dimensional spheres, including the Calabi--Eckmann examples
\citep{CalabiEckmann1953}.  The simplest instance of Calabi's question is
$V^2=S^2$.

The known almost complex structures on $S^2\times S^4$ do not by themselves
resolve the integrability problem.  As shown in
\citep[Example~5.4(2)]{GranjaMilivojevic2022}, its almost complex structures
occur in infinitely many homotopy classes distinguished by their even first
Chern classes.  The Cayley structure on the product has a nonzero
Nijenhuis tensor \citep[Section~6.2]{DaurtsevaSmolentsev2018}; no
almost complex structure orthogonal to the product of round metrics is
integrable \citep[Corollary~B]{EuhSekigawa2013}.  A cohomogeneity-one
quasi-K\"ahler structure on the product was subsequently constructed in
\citep[Theorem, p.~766]{Daurtseva2020}.
Other structured examples include an exotic almost complex circle action
\citep[Corollary~1.2]{KonstantisLindsay2025} and a torus-equivariant GKM
realization \citep[Section~8.2]{JangEtAl2026}.  Both concern
almost-complex geometry rather than integrability.  Our main result gives an
affirmative answer to Calabi's question when $V^2=S^2$.

\begin{theorem}\label{thm:main}
The smooth manifold $S^2\times S^4$ admits a complex structure.
\end{theorem}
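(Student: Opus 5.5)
The plan follows the route the abstract indicates: build a compact complex threefold as a degenerating family of complex two-tori over a compact curve, and then identify its diffeomorphism type by computing invariants and invoking a classification of simply connected six-manifolds.

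\textbf{Construction.} We start from the modular family of complex two-tori over the quotient of the upper half-plane by the $(3,4,\infty)$ triangle group, with the compactification constructed in \citep{Alpoge2026}, and take the base to be $\PP^1$. We then replace the period lattice $\Lambda$ by its index-two superlattice $\Lambda'\supset\Lambda$. This is legitimate because the monodromy representation fixes exactly one such superlattice; we check this by reducing the monodromy generators modulo $2$ and finding a unique invariant line in $\Lambda/2\Lambda$. Fibrewise quotienting by $\Lambda'/\Lambda$ is then a free holomorphic involution away from the singular fibres. We compactify over the elliptic points of orders $3$ and $4$ and over the cusp by local models: finite quotients of a product for the elliptic points, which produce multiple fibres (one of multiplicity four), and the \citep{Alpoge2026} cusp model pushed through the same quotient. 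Smoothness of the total space has to be checked in each local chart. This gives a compact complex threefold $X$.

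\textbf{Topology.} By the Barden--Wall--Jupp classification, a closed simply connected smooth $6$-manifold with torsion-free homology is determined by $H^2$, $H^3$, the cubic form on $H^2$, $w_2$, and $p_1$. For $S^2\times S^4$ these are $H^2=\ZZ x$, $H^3=0$, $x^3=0$, $w_2=0$ and $p_1=0$. The steps are as follows.
\begin{itemize}
\item[(i)] Show $\pi_1(X)=1$ by van Kampen. The fibre $\pi_1$ is killed by the vanishing cycles at the cusp together with the multiple fibres, and the base $\PP^1$ contributes nothing.
\item[(ii)] Compute $H_*(X;\ZZ)$ by Mayer--Vietoris, decomposing $X$ into the smooth part over a disc complement and the neighbourhoods of the singular fibres.
\item[(iii)] Read off the cubic form, $w_2$ and $p_1$.
\end{itemize}
For (iii), $x$ can be taken to be the primitive generator coming from the cusp: a class whose double is a cusp component. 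Then $x^3=0$ holds because the classes are pulled back or supported on fibres. Next, $p_1$ is computed from $c_1^2-2c_2$ using the canonical bundle formula for the fibration, and $w_2\equiv c_1 \pmod 2$ is evaluated on $x$.

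\textbf{Main obstacle.} I expect the integral Mayer--Vietoris step to be the hardest. Rationally, $X$ should already look like $S^2\times S^4$. Integrally, however, the multiplicity-four fibre contributes a $\ZZ/2$ torsion class, and $H^2$ might a priori be generated only up to index two by fibre components. The abstract says these two issues cancel: on the common boundary, the primitive local class (half a cusp component) and the order-two torsion class from the multiplicity-four fibre restrict to the same element. I would verify this by writing explicit bases for $H_1$ and $H_2$ of the boundary $T^2$-bundle over circles, computing the images of both classes there, and showing that they agree. Agreement glues them into one integral class $x$, removes the torsion from $H_2(X)$, and makes $H_2(X)\cong\ZZ$ with $H_3=0$. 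After that, checking $w_2=0$, i.e.\ that $c_1$ is even, reduces to a parity computation of $c_1(X)\cdot x$ via adjunction on the cusp component.
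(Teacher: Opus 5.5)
Your overall strategy (enlarge the lattice, compactify, then compute Wall's invariants) is the paper's. There is a genuine gap, however: the construction breaks down at the cusp, and the rest of your argument depends on that step.

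\textbf{The cusp model.} The fibrewise involution is translation by $d=\widehat\delta/2$. Since $\Pi(\widehat\delta)=(0,1)^{\mathsf T}$, in the toric coordinates $x=e^{2\pi i\zeta}$ of the cusp model for $\Lambda$ it is multiplication by the $2$-torsion point $\widehat\delta\otimes(-1)$ of the torus with cocharacter lattice $K_\Lambda=\ZZ\widehat w\oplus\ZZ\widehat\delta$. A torus element preserves every stratum. On the stratum of an edge with primitive direction $n$ it acts through $(K_\Lambda/\ZZ n)\otimes\CC^*$.
\begin{itemize}
\item It therefore fixes pointwise every edge curve with $n\equiv\widehat\delta\pmod{2K_\Lambda}$; every unimodular triangle has exactly one such edge.
\item It fixes every triple point. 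In a triangle chart $t=z_0z_1z_2$ it reads $(z_0,z_1,z_2)\mapsto(-z_0,-z_1,z_2)$ after relabelling.
\item These fixed curves descend to the cusp model for $\Lambda$, on which the deck group acts freely.
\end{itemize}
Hence ``the cusp model pushed through the same quotient'' has transversal $A_1$ singularities along curves of the central fibre. Your smoothness check would fail there. Moreover, that fibre is the image of an irreducible one, so it is still irreducible, and there is no cusp component whose half could be your $x$. (At the multiple fibres the quotient is harmless, since $\gamma(d)=0$ keeps the combined affine action free.)

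The missing idea is to rerun Mumford's construction with three changes:
\begin{itemize}
\item the finer cocharacter lattice $K=\ZZ\widehat w\oplus\ZZ d$;
\item a $K$-periodic unimodular triangulation;
\item the deck lattice $L=B_0^+\ZZ^2=\ZZ\widehat w\oplus\ZZ(2d)$, which now has index two in $K$.
\end{itemize}
Freeness, proper discontinuity and properness over the disc must then be re-proved for this non-saturated $L$ (Lemma~\ref{lem:an-finite-index-cusp}). The two $L$-orbits of vertices give the components $D_0,D_1$, each with normalization $\dP$.

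\textbf{Step (iii).} Several parts of it also fail as written.
\begin{itemize}
\item \emph{The cube.} A class supported on a fibre component need not have zero cube. Since $D_0+D_1$ is a fibre, $D_0^3$ is the self-intersection, on the normalization, of the curve $\nu^{-1}(D_0\cap D_1)$, and this is nonzero in general. Here it vanishes by Lemma~\ref{lem:cusp-double-curve}: $\nu^*(\cO_Y(D_0)|_{D_0})=\cO_S(-C)$, where $C=C_2+C_3+C_5+C_6$ consists of two disjoint chains of two $(-1)$-curves on $S=\dP$. So $8x^3=D_0^3=C^2=0$.
\item \emph{The Pontryagin class.} A canonical bundle formula gives at most $c_1$. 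It says nothing about $c_2$, yet $p_1$ is an independent Wall invariant. The paper obtains $c_2$ from Hirzebruch--Riemann--Roch for $\cO_{D_0}$. The normalization sequence gives $\chi(\cO_{D_0})=1-1=0$. With $c_1=0$ and $D_0^3=0$ this forces $D_0\cdot c_2=0$, hence $c_2=p_1=0$.
\item \emph{The class $w_2$.} Here $c_1\cdot x$ lies in $H^4$ and is not a number. You must pair $c_1$ with a generator of $H_2$, and the double curve is one: $D_0\cdot C_{\mathrm{dbl}}=-2$ gives $x(C_{\mathrm{dbl}})=-1$. Adjunction with $\deg N_{C_{\mathrm{dbl}}/Y}=-2$ then gives $c_1\cdot C_{\mathrm{dbl}}=0$.
\item \emph{The vanishing of $H^3$.} The equality $\kappa|_M=\alpha|_M$ settles only the torsion part of the degree-two kernel. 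Proving $H^3=0$ also needs the free boundary coordinates $[w^\vee],[\Delta]$, torsion-freeness of $H^3(Y_{\mathrm{fin}})$, and $e(Y)=4$.
\end{itemize}
Finally, $M$ is a $T^4$-bundle over $S^1$, not a $T^2$-bundle.
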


Theorem~\ref{thm:main} is closely related to the Hopf problem for $S^6$.
\citet{BorelSerre1953} proved that, among spheres of positive dimension, only
$S^2$ and $S^6$ admit almost complex structures.  The standard octonionic almost complex structure on
$S^6$ is not integrable, and whether $S^6$ admits any integrable complex
structure is the classical Hopf problem
\citep{Hopf1948,Yau1992,AgricolaEtAl2018}.  Prior work ruled out a complex
structure orthogonal to the round metric \citep{LeBrun1987} and showed that a
hypothetical complex structure on $S^6$ could not be almost homogeneous
\citep{HuckleberryEtAl2000}.  In a recent manuscript,
\citet{Alpoge2026} constructs a compact complex threefold from the
$(3,4,\infty)$ modular family of complex two-tori.  An equivariant period
map defines the family over the thrice-punctured sphere; logarithmic
transforms complete it at the two points of finite monodromy, while a
Mumford toric degeneration completes it at the cusp.  The three local models
are then glued to the smooth torus family.  A calculation of the fundamental
group and integral homology shows that the resulting threefold is a homotopy
six-sphere, and the classification of smooth homotopy six-spheres identifies
it diffeomorphically with $S^6$.

We retain this analytic framework but change the integral lattice to obtain
the topology of $S^2\times S^4$.  The
difference between $S^6$ and $S^2\times S^4$ is already visible in cohomology: the compactification formed
with $\Lambda$ has vanishing second cohomology, whereas
$H^2(S^2\times S^4;\mathbb Z)\cong\mathbb Z$.  Using the same lattice
therefore cannot yield $S^2\times S^4$.  To retain the analytic period functions
while introducing the required divisor class, we seek the smallest
monodromy-compatible enlargement that splits the cusp fibre.  This has index
two, and monodromy invariance singles out a unique such superlattice
$\Lambda^+$.  Passing from $\Lambda$ to $\Lambda^+$ induces an isogeny
of degree two on each smooth fibre, but the decisive topological change
occurs at the cusp.  For
$\Lambda$, the cusp translations act transitively on the vertices of the
periodic triangulation, so the central fibre is irreducible.  For
$\Lambda^+$, their image has index two and hence has two vertex orbits; the
central fibre therefore splits into two components
\citep[Remark~4.9]{Alpoge2026}.  The two components supply the additional
divisor class in second cohomology.  We construct the resulting
compactification and verify
that the multiplicity-three and multiplicity-four fibres still have smooth
reduced supports after the change of lattice.

Because the cusp translations generate a sublattice of index two in the
toric fibre lattice $K$, the standard toric construction must retain both
lattices.  We establish the required form of Mumford's construction
from periodic polyhedral data
\citep[Sections~2--3 and~6, especially Corollary~6.6]{Mumford1972}.
We use the polar-coordinate model of
\citet[Proposition~3.1]{NakayamaOgus2010}; their relative rounding theorems
describe the local topology
\citep[Theorems~3.7 and~5.1]{NakayamaOgus2010}.  Usui's monoid action and
shrinking map yield the required deformation retraction
\citep[Theorem~5.2]{Usui2001}.  At the two multiple fibres, the Cartan--Leray
spectral sequence computes the cohomology, while covering degrees and
explicit mapping-torus models determine the images of the restriction maps
in integral cohomology.

The lattice change also explains the integral cohomology.  On a neighbourhood
of the cusp, the class of either component is twice a primitive class
$\kappa$.  The restriction of $\kappa$ to the common boundary is a
nonzero class of order two, so $\kappa$ does not extend over the complement
by itself.  The multiplicity-four fibre contributes a class $\alpha$ of
order two with the same boundary restriction.  Hence
$(\kappa,\alpha)$ lies in the kernel of the Mayer--Vietoris difference map
and defines a primitive global class $x$.  The full topological calculation
then shows
that $Y$ is simply connected, with $H^2(Y;\mathbb Z)=\mathbb Zx$,
$H^3(Y;\mathbb Z)=0$, and torsion-free homology; it also shows that the two
cusp components represent $2x$ and $-2x$.  Thus passing to
$\Lambda^+$ creates exactly the free class in second cohomology required by
$S^2\times S^4$, without introducing torsion.

We identify the underlying smooth manifold using Wall's classification \citep{Wall1966}.
\citet[Theorem~5]{Wall1966} proved
that an oriented simply connected spin six-manifold with torsion-free
homology is determined up to oriented diffeomorphism by $H^2$, $H^3$, the
integral cubic form on $H^2$, and the first Pontryagin-class functional.  We further prove that $Y$ is spin and that its
cubic form and Pontryagin functional vanish.  Its Wall invariants therefore
agree with those of $S^2\times S^4$, and Wall's theorem identifies the two
smooth manifolds.

\vspace{3pt}

\noindent\textbf{The role of AI in this proof.} We use
Odin Automatic AI Research Agent to find the proof in this paper.

\vspace{3pt}

\noindent\textbf{Paper organization.}
Section~\ref{sec:key-inputs} assembles the proof of
Theorem~\ref{thm:main} from the results established in the next three
sections.  The analytic construction is carried out in
Section~\ref{sec:analytic-construction}, the fundamental group and integral
homology are determined
in Section~\ref{sec:topology}, and Section~\ref{sec:characteristic-classes}
computes the characteristic classes, completing the input to Wall's theorem.
Appendix~\ref{app:period-torsor} proves the existence and normalization of
the period functions used in the analytic construction.

\section{Proof sketch of the main theorem}\label{sec:key-inputs}

Let $B=\PP^1$, with two orbifold points $p_1,p_2$ of orders three and
four and a cusp $p_0$.  Choose the affine coordinate $t$ on
$B\setminus\{p_0\}$ such that $t(p_1)=0$ and $t(p_2)=1$, and put
$B^\circ=B\setminus\{p_0,p_1,p_2\}$.  Theorem~3.4 of
\citet{Alpoge2026} constructs functions $\tau,\mu,\beta$ that define a
family of complex two-tori over $B^\circ$.

The homology marking in \citet[Section~2.1]{Alpoge2026} is
$\Lambda=\ZZ\langle\widehat\gamma,\widehat u,\widehat w,
\widehat\delta\rangle$.  Put
$d=\widehat\delta/2\in\Lambda\otimes\mathbb Q$ and set
$\Lambda^+=\Lambda+\ZZ d
=\ZZ\langle\widehat\gamma,\widehat u,\widehat w,d\rangle$.
This is the unique monodromy-invariant index-two extension of $\Lambda$.
The affine actions of orders three and four remain free, and the resulting
multiple fibres have smooth reduced supports.  Put
$K=\ZZ\langle\widehat w,d\rangle$, let $t_c=t^{-1}$ be a local
coordinate at the cusp, and choose its logarithmic lift
$e^{2\pi i s}=t_c$.  The degenerating period block is
\[
 Z^+(s)=sB_0^++C^+(t_c),
 \qquad
 B_0^+=\begin{pmatrix}0&1\\-2&0\end{pmatrix},
\]
where $C^+(t_c)$ is holomorphic at the origin.  With respect to the ordered
bases $(\overline{\widehat\gamma},\overline{\widehat u})$ of
$\Lambda^+/K$ and $(\widehat w,d)$ of $K$, the image of
$B_0^+$ is $\ZZ\widehat w\oplus\ZZ(2d)$.  It has index two in
$K$, so the periodic triangulation has two vertex orbits.  Although
$B_0^+\mathbb Z^2$ has finite index in $K$, the corresponding toric
quotient is smooth and proper, and its central fibre has two components.

The construction and proof of the following compactification result are given
in Section~\ref{sec:analytic-construction}.

\begin{proposition}[Analytic compactification]\label{prop:analytic-model}
There is a compact connected smooth complex threefold $Y$ and a proper
holomorphic map $f:Y\to B$.  Over $B^\circ$, its fibres are the complex
two-tori determined by the period functions and the lattice $\Lambda^+$.  The
scheme-theoretic fibres over $p_1,p_2$ are $3S_1$ and $4S_2$, where
$S_1,S_2$ are smooth and reduced.  The cusp fibre is a reduced
normal-crossings divisor, but not a simple normal-crossings divisor.  It has
exactly two irreducible components $D_0,D_1$; each is nonnormal, and its two
local branches meet normally along a smooth rational curve.  The
normalization of either component is the toric del Pezzo surface
$\dP=\operatorname{Bl}_3\PP^2$.
\end{proposition}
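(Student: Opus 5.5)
We build $Y$ by gluing four pieces over a cover of $B$: the smooth torus family over $B^\circ$, two logarithmic-transform models over discs around $p_1,p_2$, and a toric (Mumford-type) model over a disc around $p_0$. The gluing maps are fibrewise isomorphisms of torus families over punctured discs, so $f$ is proper and $Y$ is a compact connected complex manifold once each local model is a smooth complex threefold with a proper map to its disc. We treat the pieces in the following order.

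\emph{Smooth part.} The period functions $\tau,\mu,\beta$ of \citet[Theorem~3.4]{Alpoge2026} give a holomorphic period map on the universal cover of $B^\circ$. It is equivariant for the monodromy representation on $\Lambda$. Since $\Lambda^+$ is monodromy invariant, the quotient of $\widetilde{B^\circ}\times\CC^2$ by $\Lambda^+$ (embedded via the periods), followed by the quotient by $\pi_1(B^\circ)$, is a smooth family of complex two-tori over $B^\circ$. Fibrewise, $\CC^2/\Lambda\to\CC^2/\Lambda^+$ is the degree-two isogeny.

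\emph{Orbifold points.} Near $p_j$ ($j=1,2$, orders $m_j=3,4$) we pull back to the $m_j$-fold cyclic cover, where the monodromy becomes trivial and the family extends to a smooth torus family $\mathcal A_j$ over a disc. We then take the quotient of $\mathcal A_j$ by the $\ZZ/m_j$ action that combines the deck rotation with a fibrewise affine automorphism (linear monodromy plus a translation), as in Alpoge's logarithmic transform. The key check is freeness: the translation part must be chosen in $\tfrac1{m_j}\Lambda^+$ so that no nontrivial element has a fixed point on the central fibre. For each $g\ne1$ we need $(1-\rho(g))x\equiv v_g \pmod{\Lambda^+}$ to have no solution. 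We verify this by computing $v_g$ modulo the image of $1-\rho(g)$ on $\Lambda^+\otimes\mathbb Q/\Lambda^+$ explicitly in the basis $(\widehat\gamma,\widehat u,\widehat w,d)$. Adding $d$ could in principle create a solution, and this is where the statement ``the affine actions remain free'' gets proved. Freeness gives a smooth quotient whose central fibre is $m_j$ times the smooth torus (or bielliptic-type quotient) $S_j=(\text{central fibre})/(\ZZ/m_j)$. This yields $3S_1$ and $4S_2$ with $S_j$ smooth and reduced.

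\emph{Cusp.} This is the main obstacle. After the logarithmic lift, $Z^+(s)=sB_0^++C^+(t_c)$. We realize the degeneration as Mumford's quotient of a relatively complete toric model over $\CC[[t_c]]$, convergent here, built from a $B_0^+\ZZ^2$-periodic polyhedral decomposition of $K_\RR=\RR^2$. The plan is:
\begin{enumerate}
\item Take the periodic unimodular triangulation of $K_\RR$ by the lattice $K$ compatible with the quadratic form determined by $B_0^+$. This yields a toric threefold $\widetilde{\mathcal X}$, locally of finite type and smooth since the triangulation is unimodular, with a $K$-grading and an action of $B_0^+\ZZ^2$.
\item Show that $B_0^+\ZZ^2=\ZZ\widehat w\oplus\ZZ 2d$ acts freely and properly discontinuously on $\widetilde{\mathcal X}$ over a small disc, following \citet[Sections~2--3, Corollary~6.6]{Mumford1972} adapted to the index-two sublattice. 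The quotient then has generic fibre $\CC^2/\Lambda^+$, compared via the polar-coordinate model of \citet{NakayamaOgus2010}.
\item Read off the central fibre. Vertices of the triangulation give toric surfaces whose fans are the stars of the vertices. Each star is a hexagon, giving $\dP$. The quotient identifies these according to the two vertex orbits, producing $D_0,D_1$.
\item Check the self-gluings. Within one orbit, opposite boundary curves of a hexagon are identified by translation. Each component is therefore nonnormal with normalization $\dP$, and its branches cross transversally along a smooth rational curve, namely the image of a torus-invariant $\PP^1$. The fibre is reduced and normal crossing because the model is toric, semistable, and unimodular. It fails to be simple normal crossing precisely because of these self-intersections.
\end{enumerate}
Making this periodic Mumford construction rigorous with two lattices, and matching the hexagon combinatorics modulo $B_0^+\ZZ^2$, is the step I expect to take the most work. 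I would first draw the triangulation explicitly and list the edge orbits.
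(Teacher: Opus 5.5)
Your architecture matches the paper's. You build the family $J$ for $\Lambda^+$, apply logarithmic transforms at $p_1,p_2$ with a freeness check for the enlarged lattice, construct a toric model at $p_0$ whose central fibre is read off from the hexagonal vertex stars modulo $L=B_0^+\ZZ^2$, and glue. Your cusp steps 3 and 4 also agree with the paper's analysis of $E_v\to D_{[v]}$. The gap is in cusp steps 1 and 2, which carry the actual content.

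There is no ``quadratic form determined by $B_0^+$'' to be compatible with, and Mumford's theorem cannot simply be quoted. That theorem builds polarized abelian schemes from a symmetric, positive period pairing. Here $B_0^+=\bigl(\begin{smallmatrix}0&1\\-2&0\end{smallmatrix}\bigr)$ is not symmetric, and the family has no monodromy-invariant polarization. Indeed, the invariant $2$-forms are the multiples of $Q=u^\vee w^\vee+3\gamma\Delta$. This form is of type $(1,1)$ on every fibre but indefinite; at the cusp, $Q(\cdot,(A_0-I)\cdot)=\operatorname{diag}(-6,1)$ on $\Lambda^+/K$. The index two is not what blocks Mumford either, since his setting already allows $[K:L]>1$, as for a Tate curve degenerating to an $n$-gon.

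What replaces this is a direct analytic argument that uses only $\det B_0^+\neq0$, as in Lemma~\ref{lem:an-finite-index-cusp}.
\begin{itemize}
\item \emph{The fan and the action.} Any $K$-periodic unimodular triangulation at height one works, with no convexity condition. Its fan is preserved by the shears $(y,r)\mapsto(y+rB_0^+\lambda,r)$. The element $\lambda\in\ZZ^2$ must act by such a shear composed with the torus translation $e^{2\pi iC^+(t_c)\lambda}$; call this map $\Psi_\lambda$. Your step 1 omits this twist, and without it the punctured family is not $J$.
\item \emph{The key estimate.} On the dense torus, the normalized logarithm $y=\log|x|/\log|t|$ moves by $B_t\lambda$, where $B_t=B_0^+-2\pi\Im C^+(t)/\log|t|\to B_0^+$. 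After shrinking the disc, this gives $\|B_t\lambda\|\ge a\|\lambda\|$.
\item \emph{Consequences.} Combine the estimate with $\log|z_i|=\log|t|\,\theta_i^\sigma(y)$ in a chart $t=z_0z_1z_2$. This yields freeness and proper discontinuity. Using finiteness of $K/L$ over $t=0$, it also yields properness over the disc. On the boundary, $\Psi_\lambda$ sends the orbit of a bounded face $P$ to that of $P+B_0^+\lambda$, so the action is free there too.
\end{itemize}

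The gluing also needs more than ``generic fibre $\CC^2/\Lambda^+$'', and Nakayama--Ogus plays no role at this step. You need three facts: $A_0|_K=I$, $A_0-I$ induces $B_0^+:\Lambda^+/K\to K$, and $R_{g_0}=I$. With these, the map $x=e^{2\pi i\zeta}$ kills exactly $K$ and $g_0$, and the residual $\Lambda^+/K$ acts by $\Psi_\lambda$.

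Likewise, at $p_j$ the affine quotient matches $J$ over the punctured disc only after translating by $\ell(z)\Pi^+(z)v_j$. Your fixed-point check there is sound. The paper shortens it with the invariant character $\gamma$, which has $\gamma(v_1)=1$ and $\gamma(v_2)=-1$.
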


For a Cartier divisor $D$ on $Y$, write
$[D]=c_1(\mathcal O_Y(D))\in H^2(Y;\mathbb Z)$, and use the same notation
for its restriction to either piece of the Mayer--Vietoris decomposition.

For the topological calculation, let $A$ be a closed cusp neighbourhood
and let $Y_{\mathrm{fin}}$ be the inverse image of a complementary closed
disc containing $p_1$ and $p_2$.  Then
$Y=A\cup_M Y_{\mathrm{fin}}$, where $M$ is the common boundary.  On
$A$ we construct a primitive class
$\kappa\in H^2(A;\ZZ)$ with $[D_0]=2\kappa$.  Its restriction to $M$ is
a nonzero class $\vartheta\in H^2(M;\ZZ)$ of order two.  Independently, the
multiplicity-four fibre contributes a nonzero class
$\alpha\in H^2(Y_{\mathrm{fin}};\ZZ)$ of order two, and the integral
restriction calculation gives
$\kappa|_M=\vartheta=\alpha|_M$.
This equality determines the torsion part of the Mayer--Vietoris difference
map.  The free part must also be computed; Section~\ref{sec:topology} carries
out the full boundary calculation and proves the following result.

\begin{proposition}[The Mayer--Vietoris map in cohomological degree two]
\label{prop:degree-two-mv}
For the difference map
\[
 H^2(A;\ZZ)\oplus H^2(Y_{\mathrm{fin}};\ZZ)
 \longrightarrow H^2(M;\ZZ),
 \qquad (\xi,\eta)\longmapsto \xi|_M-\eta|_M,
\]
the kernel is infinite cyclic, generated by
\begin{equation}\label{eq:x-generator}
 x=(\kappa,\alpha).
\end{equation}
Viewed as pairs of restrictions to $A$ and $Y_{\mathrm{fin}}$,
$[D_0]=2x$ and $[D_1]=-2x$.
\end{proposition}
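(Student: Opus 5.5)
We compute the three groups $H^2(A;\ZZ)$, $H^2(Y_{\mathrm{fin}};\ZZ)$ and $H^2(M;\ZZ)$ explicitly, together with both restriction maps, and then read off the kernel. First, $M$ is a $T^4$-bundle over a circle whose monodromy is the cusp monodromy acting on $\Lambda^+$. Its monodromy is unipotent, with logarithm induced by $B_0^+$. Using the Wang sequence, $H^2(M;\ZZ)$ is an extension of the coinvariants/invariants of $\bigwedge^2(\Lambda^+)^\vee$ by those of $(\Lambda^+)^\vee$. The factor $2$ in $B_0^+$ produces a $\ZZ/2$ summand in the coinvariants, and this is the class $\vartheta$.

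Second, we compute the cohomology of the two pieces.
\begin{itemize}
\item For $A$: $A$ deformation retracts onto the cusp fibre $D_0\cup D_1$ (Usui/Nakayama--Ogus). Its $H^2$ is then computed from the normalizations $\dP$, glued along the double curves, via the standard spectral sequence for a normal-crossings space. Equivalently, $A$ is the toric quotient of the Mumford model by $K/B_0^+\ZZ^2$ and by the monodromy. The divisor classes $[D_0],[D_1]$ satisfy $[D_0]+[D_1]=[f^{-1}(p)]=0$ on $A$. One checks that $[D_0]$ is divisible by exactly $2$, using a piecewise-linear function on the periodic triangulation that is $1$ on one vertex orbit and $0$ on the other, modulo the index-two translations. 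This defines $\kappa$.
\item For $Y_{\mathrm{fin}}$: use the Cartan--Leray spectral sequence for the orbifold fundamental group of the disc with the points $p_1,p_2$ acting on $T^4$-bundle data. The torsion in $H^2$ comes from $H^2$ of the cyclic groups $\ZZ/3$ and $\ZZ/4$ with coefficients in the fibre cohomology. The free part is the invariants of $H^2(T^4)$ under the global monodromy group, which we expect to vanish after accounting for the $(3,4,\infty)$ triangle group action, as in \citet{Alpoge2026}. The $\ZZ/4$ point gives an order-two class $\alpha$.
\end{itemize}

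Third, we compute the restriction maps. The free part of $H^2(A)$, generated by $\kappa$, maps to $M$ with image $\vartheta$: because $2\kappa=[D_0]$ restricts to the trivial bundle away from the fibre, $\kappa|_M$ is $2$-torsion, and it is nonzero by an explicit computation of the linear function on the lattice. The other classes of $H^2(A)$, namely those coming from $H^2$ of the fibre torus that are monodromy-invariant, restrict injectively to the free part of $H^2(M)$. Likewise, the classes of $Y_{\mathrm{fin}}$ restrict to the free part of $H^2(M)$, where their image meets the image from $A$ only in $0$, because the invariants under the whole triangle group vanish. On torsion, $\alpha|_M=\vartheta$; this is verified via the covering of degree $4$ and a mapping-torus model near $p_2$. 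The $\ZZ/3$ contribution restricts trivially to the $2$-primary $\vartheta$, and it must itself be checked to be zero or to restrict injectively, so that it does not enter the kernel.

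Assembling these, the kernel consists of the pairs $(a\kappa+\xi',\,b\alpha+\eta')$ with $\xi'|_M=\eta'|_M$ on the free parts and $a\vartheta=b\vartheta$. This forces $\xi'=\eta'=0$ and $a\equiv b \pmod 2$, and with $\alpha$ of order two the kernel is $\ZZ(\kappa,\alpha)$. Then $[D_0]$, viewed as a pair of restrictions, is $(2\kappa,0)=(2\kappa,2\alpha)=2x$. Since $D_0+D_1$ is a fibre, which is linearly trivial near the cusp and globally equivalent to $3S_1$ or $4S_2$ with torsion restrictions, we get $[D_1]=-[D_0]=-2x$ on $A$. On $Y_{\mathrm{fin}}$ the divisor $D_1$ is trivial, so its restriction there is $0=-2\alpha$.

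\textbf{Main obstacle.} The hardest step is the integral identification $\kappa|_M=\alpha|_M$ in $H^2(M;\ZZ)$, which requires matching two independently computed generators of the same $\ZZ/2$. I would attack it by showing that $H^2(M)_{\mathrm{tors}}\cong\ZZ/2$ exactly and that both restrictions are nonzero; the equality then follows automatically.
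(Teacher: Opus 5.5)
There is a genuine gap, in the free part of the calculation.

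Your treatment of the torsion is sound and agrees with the paper. $\operatorname{Tor}H^2(M;\ZZ)\cong\ZZ/2$ comes from the Wang cokernel $C_0=\ZZ[w^\vee]\oplus\ZZ[\Delta]\oplus(\ZZ/2)[\gamma]$. The class $\kappa|_M$ is $2$-torsion because $2\kappa=[D_0]$ is trivial away from the cusp fibre. Once $\kappa|_M$ and $\alpha|_M$ are both shown nonzero, they must coincide. So the step you single out as the main obstacle is actually the easy one.

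\textbf{The gap.} You claim the free part of $H^2(Y_{\mathrm{fin}};\ZZ)$ is the module of monodromy invariants in $\bigwedge^2V^+$, and that these invariants vanish. Both halves are wrong.

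Mayer--Vietoris for $Y_{\mathrm{fin}}\simeq S_1\cup_F S_2$ gives $H^2(Y_{\mathrm{fin}};\ZZ)=\ZZ b\oplus\ZZ Q_{\mathrm{fin}}\oplus(\ZZ/2)\alpha$.
\begin{itemize}
\item The class $b$ is the connecting class from $\operatorname{coker}\bigl(H^1(S_1)\oplus H^1(S_2)\to H^1(F)\bigr)\cong\ZZ[w^\vee]$. This is the $E_2^{1,1}=H^1(\ZZ/3*\ZZ/4;V^+)$ term, which your description omits.
\item The class $Q_{\mathrm{fin}}$ restricts on $F$ to $4Q$, where $Q=u^\vee w^\vee+3\gamma\Delta$ is invariant under both $T_1$ and $T_2$. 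Indeed, it is a basis vector of both invariant lattices $I_1$ and $I_2$.
\end{itemize}
So your reason why the images from $A$ and $Y_{\mathrm{fin}}$ meet only in zero, and why no free class of $Y_{\mathrm{fin}}$ enters the kernel, is false.

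\textbf{Why this matters.} This is the core of the proposition, not a side issue. Let $\widetilde Q\in H^2(A)$ lift $Q\in I_0$. The pair $(4\widetilde Q,Q_{\mathrm{fin}})$ has identical restrictions to a smooth fibre, and $b$ restricts to zero on the fibre. A rank count (free ranks $5+2$ mapping to free rank $6$) only shows the kernel has rank at least one. Information at the level of the fibre is compatible with a kernel containing $(\kappa,\alpha)$, $(0,b)$ and $(4\widetilde Q,Q_{\mathrm{fin}})$, i.e.\ of rank three.

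\textbf{What excludes these classes.} Their components in $C_0\subset H^2(M)$ do. The paper establishes three facts:
\begin{itemize}
\item The suspension tori $\xi_w,\xi_d$ bound embedded solid tori in $A$. Hence every restriction from $A$ pairs trivially with them, so lifts can be chosen with zero $[w^\vee]$- and $[\Delta]$-coordinates.
\item A central-extension commutator computation shows that $b|_M$ has $[w^\vee]$-coefficient $\pm1$ and $[\Delta]$-coefficient zero.
\item The slant product with the global $d$-circle action gives $Q_{\mathrm{fin}}|_M=4\widetilde Q|_M-[\Delta]$.
\end{itemize}
These unit coefficients make all Smith invariants of the free part of the difference map equal to one. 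The only pairs left in the kernel are $(n\kappa,\epsilon\alpha)$ with $n\equiv\epsilon\pmod 2$.

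Finally, you leave open a possible $3$-torsion class in $H^2(Y_{\mathrm{fin}})$. It is excluded by computing $H_1(Y_{\mathrm{fin}};\ZZ)\cong\ZZ\oplus\ZZ/2$ and applying universal coefficients, which also produces $\alpha$ directly.
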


The same section combines the boundary calculation with van Kampen and the
Euler characteristic of the cusp fibre.  This gives simple connectivity and,
including the integral torsion calculation, determines the remaining
homology groups, proving the following proposition.

\begin{proposition}[Fundamental group and integral homology]\label{prop:integral-topology}
The threefold $Y$ is simply connected and
\[
 H_i(Y;\ZZ)=
 \begin{cases}
  \ZZ,&i=0,2,4,6,\\
  0,&i=1,3,5.
 \end{cases}
\]
For a primitive generator $x\in H^2(Y;\ZZ)$, after choosing its sign,
$[D_0]=2x$ and $[D_1]=-2x$.
\end{proposition}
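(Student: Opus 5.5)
The plan is to compute $\pi_1(Y)$ by van Kampen over the decomposition $Y=A\cup_M Y_{\mathrm{fin}}$, and then the integral homology from the Mayer--Vietoris sequence. Proposition~\ref{prop:degree-two-mv} supplies the degree-two term, and an Euler characteristic count supplies the rest.

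\textbf{Fundamental group.} Here $M$ is a $T^4$-bundle over a circle $\partial\Delta$ whose monodromy is the cusp monodromy. Hence $\pi_1(M)=\Lambda^+\rtimes\ZZ$, where the $\ZZ$ factor is generated by a loop $\ell$ around $p_0$.

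For $A$: by the Usui/Nakayama--Ogus deformation retraction, $A$ retracts onto the cusp fibre $D_0\cup D_1$. Its fundamental group is then the quotient of $\pi_1(M)$ in which $\ell$ dies and the vanishing classes die. The vanishing classes are the classes in $K$, since the toric directions are contracted. What survives is a quotient of $\Lambda^+/K$ together with whatever the dual complex of the periodic triangulation contributes.

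For $Y_{\mathrm{fin}}$: the orbifold fundamental group of the disc containing $p_1,p_2$ acts on $\Lambda^+$, and $\pi_1(Y_{\mathrm{fin}})$ is $\Lambda^+\rtimes\pi_1^{\mathrm{orb}}$ modulo the relations that the lifts of the order-three and order-four loops satisfy. These relations come from the free affine actions, i.e.\ the multiple fibres $3S_1$ and $4S_2$.

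Amalgamating over $\pi_1(M)$ kills $\ell$, which is the product of the loops around $p_1$ and $p_2$. It also kills $K$, and it kills the images $(1-g_i)\Lambda^+$ together with the translation parts of the local monodromies $g_1,g_2$. I would then check, with the explicit matrices for the monodromies of orders three and four, that $K+(1-g_1)\Lambda^++(1-g_2)\Lambda^+=\Lambda^+$, and that the translation vectors of the affine actions generate the remaining cokernel. Together these give $\pi_1(Y)=1$.

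\textbf{Homology.} Simple connectivity gives $H_1=0$, and Poincaré duality gives $H_5=0$. Proposition~\ref{prop:degree-two-mv} gives the kernel of the difference map in degree two. To get $H^2(Y)\cong\ZZ x$ I also need the image of the connecting map $H^1(M)\to H^2(Y)$. Since $\pi_1(Y)=1$, that image vanishes exactly when $H^1(A)\oplus H^1(Y_{\mathrm{fin}})\to H^1(M)$ is surjective, and I would check surjectivity using the same degree-one restriction data as above. This gives $H^2(Y)=\ZZ x$, and by universal coefficients $H_2=\ZZ$ with $H_1$ torsion-free. Then $H^3(Y)\cong\operatorname{Hom}(H_3,\ZZ)\oplus\operatorname{Ext}(H_2,\ZZ)$ is free, and Poincaré duality forces $H_4\cong H^2=\ZZ$ and $H_3$ free.

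The rank of $H_3$ then follows from the Euler characteristic:
\[
\chi(Y)=\chi(D_0\cup D_1)+\chi(S_1)+\chi(S_2),
\]
since the smooth torus fibres contribute zero. Here $\chi(S_i)=0$ because $S_i$ is a free finite quotient of a torus. For the cusp fibre, each normalization $\dP$ has $\chi=6$, and I would subtract the glued rational curves and the intersection curves, reading the count off the periodic triangulation. The expected result is $\chi=4$, which forces $b_3=0$. This completes the computation of $H_*(Y;\ZZ)$.

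The divisor statement is immediate: $x$ generates $H^2(Y)=\ZZ x$, and Proposition~\ref{prop:degree-two-mv} gives $[D_0]=2x$ and $[D_1]=-2x$. This is consistent with $[D_0]+[D_1]=[f^{-1}(p_0)]=f^*[\mathrm{pt}]$, which is torsion-free and vanishes because $2x-2x=0$. That is compatible with fibre classes pulling back to zero in $H^2$.

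\textbf{Main obstacle.} The hardest step is the Euler characteristic of the non-normal, non-SNC cusp fibre. One has to account for the self-gluing of each $\dP$ along a rational curve and for the mutual double curves; I would organize this as a count over the cells of the periodic triangulation modulo $B_0^+\ZZ^2$. A close second is verifying surjectivity onto $H^1(M)$, since the explicit monodromy matrices must be used there.
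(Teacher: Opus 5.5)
Your treatment of $\pi_1$, of the vanishing of the connecting map $H^1(M)\to H^2(Y)$, and of $e(Y)=e(W)=4$ is in line with the paper. The step from $H^2(Y)\cong\ZZ$ to torsion-free homology, however, is a genuine gap. With $H_1(Y)=0$, the universal coefficient theorem gives only $\operatorname{Hom}(H_2(Y),\ZZ)\cong\ZZ$, that is, $H_2(Y)\cong\ZZ\oplus T$ with $T$ finite. Torsion of $H_2$ is invisible in $H^2$; it appears instead as $\operatorname{Ext}(H_2(Y),\ZZ)\subset H^3(Y)$. Your assertion that $H^3(Y)$ is free therefore presupposes what is to be proved. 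Poincar\'e duality together with $b_3=0$ leaves you with $H_2\cong\ZZ\oplus T$ and $H_3\cong T$, and nothing you have established rules out $T\neq0$. For instance, take $S^2\times S^4$ connected-summed with the boundary of a regular neighbourhood of a Moore space $S^2\cup_k e^3\subset\RR^7$. It is simply connected with $H^2\cong\ZZ$ and Euler characteristic $4$, yet $H_2\cong\ZZ\oplus\ZZ/k$. Since Wall's theorem needs torsion-free homology, this is not cosmetic. Note also that Proposition~\ref{prop:degree-two-mv} controls only the \emph{kernel} of the degree-two difference map. Torsion in $H^3(Y)$ is governed by that map's cokernel and by $H^3$ of the two pieces.

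The paper closes this gap with an integral argument in degree three. The boundary restrictions \eqref{eq:kappa-boundary} and \eqref{eq:B-boundary-restrictions}, together with the matrix \eqref{eq:boundary-free-matrix}, show that $H^2(A)\oplus H^2(Y_{\mathrm{fin}})\to H^2(M)$ is surjective. It reaches the torsion class $\vartheta$, the class $[w^\vee]$, and the class $[\Delta]$ as $4\widetilde Q|_M-Q_{\mathrm{fin}}|_M$. Hence Mayer--Vietoris gives the injection \eqref{eq:H3-Y-injection} of $H^3(Y)$ into $H^3(A)\oplus H^3(Y_{\mathrm{fin}})$. The first summand is free by \eqref{eq:cusp-homology}. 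The second is free by Lemma~\ref{lem:fourfold-pushforward}, whose essential input is that $\pi_{2*}(u\wedge d)$ hits the $\ZZ/2$ in $H_2(S_2)$; without this, the multiplicity-four fibre could leave torsion in $H_2(Y_{\mathrm{fin}})$. Only then does $b_3=0$ give $H^3(Y)=0$, and hence $\operatorname{Ext}(H_2(Y),\ZZ)=0$.

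A smaller point concerns your $\pi_1$ sketch. The identity $K+(1-g_1)\Lambda^++(1-g_2)\Lambda^+=\Lambda^+$ is false. The character $\gamma$ vanishes on all three summands, since it is $A_j$-invariant and kills $K$, so the $\widehat\gamma$-direction survives. Also, the translation vectors are not relations to impose; they equal $r_1^3$ and $r_2^4$. The $\widehat\gamma$-direction dies only because $r_1^3=c$, $r_2^4=c^{-1}$ and $r_1r_2=1$ (from $a_0=1$) together give $c=r_1^3=r_1^4$.
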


It remains to compute the invariants used in Wall's theorem.  Choose the sign
of $x$ so that $[D_0]=2x$.  The normalization calculation gives
$D_0^3=0$ and $D_0\cdot C_{\mathrm{dbl}}=-2$.  Thus $x^3=0$ and
$x(C_{\mathrm{dbl}})=-1$, so the double curve generates
$H_2(Y;\mathbb Z)$.  Since
$\deg N_{C_{\mathrm{dbl}}/Y}=-2$, adjunction gives $c_1(Y)=0$.
Finally, the normalization sequence gives $\chi(\mathcal O_{D_0})=0$, and
the Hirzebruch--Riemann--Roch theorem gives $D_0\cdot c_2(Y)=0$.  Since
$[D_0]=2x$ and $H^4(Y;\mathbb Z)\cong\mathbb Z$, this forces
$c_2(Y)=0$.  Section~\ref{sec:characteristic-classes} supplies these
normalization, adjunction, and Riemann--Roch calculations and proves the
following proposition.

\begin{proposition}[Characteristic classes]
\label{prop:characteristic-classes}
For the primitive generator $x$ fixed above,
\[
 x^3=0,\qquad c_1(Y)=0,\qquad c_2(Y)=0,
 \qquad p_1(Y)=0.
\]
\end{proposition}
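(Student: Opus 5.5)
The plan follows the outline sketched just before the statement. Everything is reduced to intersection numbers on the normalization $\nu:\dP\to D_0$ and to the topological facts in Propositions~\ref{prop:analytic-model}, \ref{prop:degree-two-mv} and~\ref{prop:integral-topology}.

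\textbf{Step 1: normal bundle of $D_0$ and the cubic form.} Since $D_0+D_1=f^*(p_0)$ is a fibre, $\mathcal O_Y(D_0+D_1)|_{D_0}$ is trivial. Hence $\nu^*\mathcal O_Y(D_0)\cong\nu^*\mathcal O_Y(-D_1)$. On $\dP=\operatorname{Bl}_3\PP^2$, the pullback of $D_1$ is the preimage of $D_0\cap D_1$. In the toric picture this is a union of torus-invariant curves: the hexagon edges whose neighbouring vertices lie in the other orbit. The preimage of the self-double curve $C_{\mathrm{dbl}}$ (the curve along which the two branches of $D_0$ meet) consists of two further hexagon edges, identified by $\nu$.

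I would read off which edges go where from the periodic triangulation with two vertex orbits. Then compute
\[
D_0^3=(\nu^*D_0)^2_{\dP}=(\nu^*D_1)^2 .
\]
The edges of $\dP$ are $(-1)$-curves in a cyclic hexagon, each meeting its two neighbours once. I expect the configuration to be two opposite pairs or an alternating set, and that this self-intersection vanishes. Given $[D_0]=2x$ from Proposition~\ref{prop:integral-topology}, this yields $8x^3=0$, so $x^3=0$.

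\textbf{Step 2: $c_1$ and $x(C_{\mathrm{dbl}})$.} The degree $D_0\cdot C_{\mathrm{dbl}}$ is computed as $\deg \nu^*\mathcal O(D_0)=-\deg\nu^*\mathcal O(D_1)$ on a preimage edge of $C_{\mathrm{dbl}}$, counted with the appropriate branch contributions. I expect the result $-2$, hence $x(C_{\mathrm{dbl}})=-1$, so $C_{\mathrm{dbl}}$ generates $H_2(Y;\ZZ)\cong\ZZ$.

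Next, $C_{\mathrm{dbl}}\cong\PP^1$, and near the cusp $N_{C/Y}$ splits as the normal bundles in the two local branches. Each branch is a $(-1)$-curve in a copy of $\dP$, so $\deg N_{C/Y}=-2$. Adjunction then gives
\[
c_1(Y)\cdot C_{\mathrm{dbl}}=2+\deg N=0 .
\]
Since $C_{\mathrm{dbl}}$ generates $H_2$ and $H^2$ is torsion free, $c_1(Y)=0$.

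\textbf{Step 3: $c_2$.} Compute $\chi(\mathcal O_{D_0})$ from the normalization sequence
\[
0\to\mathcal O_{D_0}\to\nu_*\mathcal O_{\dP}\to\mathcal O_{C_{\mathrm{dbl}}}\to0 ,
\]
which gives $\chi(\mathcal O_{D_0})=1-1=0$.

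Apply Hirzebruch--Riemann--Roch to $\chi(\mathcal O_Y)-\chi(\mathcal O_Y(-D_0))$. Using $c_1=0$, this equals
\[
\tfrac{1}{12}D_0\cdot c_2(Y)+\tfrac16 D_0^3 .
\]
Combined with $D_0^3=0$, this gives $D_0\cdot c_2=0$, i.e.\ $2x\cdot c_2=0$. Because $H^4(Y;\ZZ)\cong\ZZ$ is dual to $H^2$ via cup product (Poincaré duality, torsion-free homology), it follows that $c_2(Y)=0$. Then $p_1=c_1^2-2c_2=0$.

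\textbf{Main obstacle.} The hardest step is Step 1. It requires identifying exactly which edges of the hexagon map to $D_0\cap D_1$ and which to $C_{\mathrm{dbl}}$, and handling the non-simple normal crossing correctly when computing $\nu^*\mathcal O(D_0)$. I would do this by working in the explicit toric chart of the Mumford model, where the vertices alternate between the two orbits. Then $\nu^*\mathcal O(D_0)$ is the toric line bundle given by a piecewise linear function on the fan. The degrees on edges, and the self-intersection, are then direct computations.
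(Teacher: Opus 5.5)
Your proposal is correct and is essentially the paper's proof. The paper isolates your Steps~1--2 as Lemma~\ref{lem:cusp-double-curve}, where $\nu^*(\mathcal O_Y(D)|_D)=\mathcal O_S(-C)$ with $C=C_2+C_3+C_5+C_6$, the complement of the opposite pair $C_1,C_4$ in the $\pm\widehat w$ directions (your ``two opposite pairs'' case), so that $C^2=-4+2\cdot 2=0$ and $D\cdot C_{\mathrm{dbl}}=-C\cdot C_1=-2$, computed on a single branch; it then runs the same adjunction, normalization-sequence and Hirzebruch--Riemann--Roch argument, and the only variation is that you get $\deg N_{C_{\mathrm{dbl}}/Y}=-2$ as the sum of the two $(-1)$ branch normal degrees, whereas the paper identifies $\det N_{C_{\mathrm{dbl}}/Y}$ with $\mathcal O_Y(D)|_{C_{\mathrm{dbl}}}$, and both are valid.
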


\begin{proof}[Proof of Theorem~\ref{thm:main}]
Proposition~\ref{prop:analytic-model} supplies the compact complex
threefold.  By Proposition~\ref{prop:integral-topology}, it is simply
connected with torsion-free homology, $H^2(Y;\ZZ)=\ZZ x$, and
$H^3(Y;\ZZ)=0$.  Proposition~\ref{prop:characteristic-classes} gives
$x^3=0$, $w_2(Y)=0$, and $p_1(Y)=0$.  Thus the cubic and Pontryagin
functionals vanish.  Wall's classification, in the form used here, says that
an oriented simply connected spin six-manifold with torsion-free homology is
determined by $H^2$, $H^3$, the integral cubic form on $H^2$, and the
$p_1$-functional, subject to
$\mu(a,a,b)\equiv\mu(a,b,b)\pmod2$ and
$p_1(a)\equiv4\mu(a,a,a)\pmod{24}$
\citep[Theorem~5]{Wall1966}.  These congruences are automatic here, and the
displayed data are the oriented Wall invariants of $S^2\times S^4$.
The classification therefore gives an
orientation-preserving diffeomorphism
$Y\cong_{\mathrm{diff}}S^2\times S^4$, proving the theorem.
Wall's corrigendum \citep{Wall1967} concerns Theorem~14 and does not affect
this application of Theorem~5.\qedhere
\end{proof}

\input{sections/analytic}
\input{sections/topology}

\appendix
\input{sections/period-torsor-appendix}

\bibliographystyle{plainnat}
\bibliography{reference}

\end{document}

%% file: sections/analytic.tex
\section{The analytic construction}\label{sec:analytic-construction}

We prove Proposition~\ref{prop:analytic-model} by compactifying the torus
family determined by $\Lambda^+$ over the three marked points of $B$.
The rational monodromy operators and the holomorphic period functions from
\citep[Sections~2--3]{Alpoge2026} are retained; what changes is the integral
marking.  The proof is therefore organized around the three places where
that marking enters the construction.  We first check that the finite
monodromies preserve $\Lambda^+$ and admit free affine lifts.  We then
rewrite the period matrix in the new marking.  Finally, we compactify its
logarithmic term at the cusp, where the deck-translation lattice has index
two in the toric lattice.  Once these compatibility statements are in
place, it remains to identify the local models over the punctured coordinate
discs and glue them.  The period-function lemma used below is proved in
Appendix~\ref{app:period-torsor}.

\subsection{The enlarged lattice and the finite monodromies}

Let $B=\PP^1$, with $p_1=0$, $p_2=1$, and $p_0=\infty$, and let
$t$ be the affine coordinate on $B\setminus\{p_0\}$.  Following
\citep[Section~2.1]{Alpoge2026}, let
$\Lambda=\ZZ\langle\widehat\gamma,\widehat u,\widehat w,
\widehat\delta\rangle$ and
$\mathcal B=(\widehat\gamma,\widehat u,\widehat w,\widehat\delta)$.
Let $A_1,A_2\in\operatorname{Aut}(\Lambda)$ be the monodromy operators
about $p_1,p_2$.  With the column-vector convention, their matrices in
the ordered basis $\mathcal B$ are
\begin{equation}\label{eq:an-original-monodromy-matrices}
 [A_1]_{\mathcal B}=
 \begin{pmatrix}
 1&0&0&0\\
 6&0&1&0\\
 -6&-1&-1&0\\
 -2&1&0&1
 \end{pmatrix},
 \qquad
 [A_2]_{\mathcal B}=
 \begin{pmatrix}
 1&0&0&0\\
 0&0&-1&0\\
 -6&1&0&0\\
 3&0&1&1
 \end{pmatrix}.
\end{equation}
These monodromy operators and the translation vectors used below are taken
from \citep[Lemmas~2.4 and~2.6]{Alpoge2026}.  Our task is to find an
overlattice that they preserve and that has the required cusp behaviour.

To make that requirement precise, let $K$ denote the cocharacter lattice
of the torus used in the cusp compactification and let $L\subset K$ be the
lattice of deck translations.  The irreducible components of the central
fibre are indexed by $K/L$.  For $\Lambda$, the lattice $L$ is
saturated in $K$, and the central fibre has one component.  To obtain the
divisor data needed for nonzero second cohomology, we require two components,
hence $[K:L]=2$.  We therefore seek the smallest monodromy-invariant
enlargement of $\Lambda$ for which the logarithmic period matrix has this
index.  Every index-two overlattice of
$\Lambda$ lies in $\tfrac12\Lambda$ and has the form
$\Lambda+\mathbb Z(v/2)$ for a nonzero class
$\bar v\in\Lambda/2\Lambda$.  Reducing
\eqref{eq:an-original-monodromy-matrices} modulo two gives
\[
 \begin{aligned}
 \ker(\bar A_1-I)
  &=\mathbb F_2\langle\overline{\widehat\gamma},
       \overline{\widehat\delta}\rangle,\\
 \ker(\bar A_2-I)
  &=\mathbb F_2\langle
       \overline{\widehat\gamma+\widehat u+\widehat w},
       \overline{\widehat\delta}\rangle,
 \end{aligned}
 \qquad
 \ker(\bar A_1-I)\cap\ker(\bar A_2-I)
 =\mathbb F_2\,\overline{\widehat\delta}
 \subset\Lambda/2\Lambda.
\]
Thus the only possible new half-period is represented by
$\widehat\delta/2$.  Put
$d=\widehat\delta/2\in\Lambda\otimes\mathbb Q$ and
$\Lambda^+=\Lambda+\ZZ d
=\ZZ\langle\widehat\gamma,\widehat u,\widehat w,d\rangle$.
Then $[\Lambda^+:\Lambda]=2$, and $\Lambda^+$ is the unique index-two
overlattice of $\Lambda$ preserved by both monodromy operators.

Write $\mathcal B^+=(\widehat\gamma,\widehat u,\widehat w,d)$ and
$D=\operatorname{diag}(1,1,1,\tfrac12)$.
The columns of $D$ are the vectors of $\mathcal B^+$, expressed in
the basis $\mathcal B$.  The change-of-basis formula therefore gives
\begin{equation}\label{eq:an-monodromy-matrices}
 \begin{aligned}
 [A_1]_{\mathcal B^+}
 &
 =\begin{pmatrix}
  1&0&0&0\\
  6&0&1&0\\
  -6&-1&-1&0\\
  -4&2&0&1
 \end{pmatrix},
 [A_2]_{\mathcal B^+}
 &
 =\begin{pmatrix}
  1&0&0&0\\
  0&0&-1&0\\
  -6&1&0&0\\
  6&0&2&1
 \end{pmatrix}.
 \end{aligned}
\end{equation}
In particular, these matrices are integral, which verifies directly that
$A_1$ and $A_2$ preserve $\Lambda^+$.  The operators on
$\Lambda\otimes\mathbb Q=\Lambda^+\otimes\mathbb Q$ have not changed;
only their coordinate matrices have changed.  All subsequent coordinate
formulas for the monodromy use the basis $\mathcal B^+$.
The affine translation vectors are
$v_1=\widehat\gamma+2\widehat u-4\widehat w$ and
$v_2=-\widehat\gamma-3\widehat u+3\widehat w$.
Let $m_1=3$, $m_2=4$, and let
$\gamma\in\operatorname{Hom}(\Lambda^+,\ZZ)$ be dual to
$\widehat\gamma$.  The local model over $p_j$ will be obtained from the
affine transformation $x\longmapsto A_jx+v_j/m_j$ on
$(\Lambda^+\otimes\mathbb R)/\Lambda^+$.
These data give a cyclic action of order $m_j$ provided
$A_j^{m_j}=I$ and $A_jv_j=v_j$; its $m_j$-th power is then translation
by the period $v_j$.  Freeness can be checked by finding an
$A_j$-invariant integral character $\gamma$ for which
$\gcd(\gamma(v_j),m_j)=1$.  Direct multiplication gives
\begin{equation}\label{eq:an-finite-orders}
 A_1^3=A_2^4=I,
 \qquad A_jv_j=v_j,
 \qquad \gamma\circ A_j=\gamma\quad (j=1,2),
 \qquad \gamma(v_1)=1,\quad\gamma(v_2)=-1.
\end{equation}

Indeed, if the $k$-th power, with $0<k<m_j$, had a fixed point, applying
$\gamma$ to the fixed-point equation would give
$k\gamma(v_j)/m_j\in\mathbb Z$, a contradiction.  Thus
\eqref{eq:an-finite-orders} records exactly the properties of the existing
monodromy data needed after the change of lattice.  We use this criterion
explicitly in the construction of the two multiple fibres.

\subsection{Period functions and equivariance}

The matrices $[A_j]_{\mathcal B^+}$ in
\eqref{eq:an-monodromy-matrices} prescribe the integral monodromy.  To
realize them by a holomorphic family of complex tori, we need a period matrix
$\Pi^+$ and
holomorphic changes of fibre coordinates $R_j(z)\in\operatorname{GL}_2(\CC)$
satisfying $R_j(z)\Pi^+(z)=\Pi^+(g_jz)A_j$.
The functions in the next lemma provide such a matrix.  Their cusp
normalization also separates its logarithmic term from the part that extends
holomorphically across $p_0$.

Let $\Gamma_{\mathrm{orb}}
=\langle g_1,g_2\mid g_1^3=g_2^4=1\rangle$, with
$g_0=(g_1g_2)^{-1}$, act on $\mathfrak H_z$, and let
$\pi:\mathfrak H_z\to B\setminus\{p_0\}$ be the resulting orbifold
uniformization.  If $z_j$ is fixed by
$g_j$, choose the generator orientations and linearizing coordinates so
that $s_j(g_jz)=e^{-2\pi i/m_j}s_j(z)$, where
$(m_1,m_2)=(3,4)$.
Let $U_0^\times$ be a punctured disc about $p_0$, and choose a connected
component $\widetilde U_0$ of $\pi^{-1}(U_0^\times)$ whose stabilizer is
$\langle g_0\rangle$.

\begin{lemma}[Period functions]\label{lem:an-period-functions}
There are holomorphic functions
$\tau:\mathfrak H_z\to\mathfrak H$ and
$\mu,\beta:\mathfrak H_z\to\CC$ satisfying
\begin{align}
 \tau(g_1z)&=\frac{\tau-1}{\tau},
 &\tau(g_2z)&=-\frac1\tau,\label{eq:an-tau-laws}\\
 \mu(g_1z)&=\frac{1-\mu}{\tau},
 &\mu(g_2z)&=1+(\mu/\tau),\label{eq:an-mu-laws}\\
 \beta(g_1z)&=\beta+2-\frac{6(1-\mu)^2}{\tau},
 &\beta(g_2z)&=\beta-3-\frac{6\mu^2}{\tau}.
 \label{eq:an-beta-laws}
\end{align}
Here and below, functions without a displayed argument are evaluated at
$z$.  These laws imply $\tau(g_0z)=\tau-1$,
$\mu(g_0z)=\mu$, and $\beta(g_0z)=\beta+1$.
If $t_c=t^{-1}$, then on $\widetilde U_0$ there are a
logarithmic coordinate $s$ and a function $h$, holomorphic in $t_c$
at zero, such that
\begin{equation}\label{eq:an-log-coordinate}
 e^{2\pi is}=t_c,
 \qquad s(g_0z)=s(z)-1,
 \qquad \tau=s+h(t_c),
\end{equation}
and $\mu$ and $b_c:=\beta+\tau$ are holomorphic in $t_c$ at zero.
The additive constant in $\beta$ can be chosen so that
\begin{equation}\label{eq:an-D-negative}
 \mathcal D:=\Im\beta-\frac{6(\Im\mu)^2}{\Im\tau}<0
 \quad\text{on }\mathfrak H_z.
\end{equation}
\end{lemma}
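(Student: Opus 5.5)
The plan is to take the existence of $\tau,\mu,\beta$ from \citet[Theorem~3.4]{Alpoge2026}, or to rebuild them by the same equivariant-section argument. The new content is the normalizations: the $g_0$-laws, the cusp expansion \eqref{eq:an-log-coordinate}, and the sign condition \eqref{eq:an-D-negative}. These are what the $\Lambda^+$-construction uses.

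\textbf{Step 1: existence.}
\begin{itemize}
\item[(a)] For $\tau$, I would use a holomorphic equivariant map $\mathfrak H_z\to\mathfrak H$ for the homomorphism $\Gamma_{\mathrm{orb}}\to\operatorname{PSL}_2(\ZZ)$ sending $g_1\mapsto\bigl(\begin{smallmatrix}1&-1\\1&0\end{smallmatrix}\bigr)$ and $g_2\mapsto\bigl(\begin{smallmatrix}0&-1\\1&0\end{smallmatrix}\bigr)$. Concretely, pick a rational function $J(t)$ with $J(0)=0$ unramified, $J(1)=1728$ with ramification index two, and a pole at $t=\infty$. Then lift $J\circ\pi$ through the $j$-function. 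The ramification conditions make the lift single-valued and holomorphic at the orbifold points: the order-four isotropy acts through an order-two element. The resulting lift has the stated equivariance after conjugating by an element of $\operatorname{PSL}_2(\ZZ)$.
\item[(b)] Given $\tau$, the laws \eqref{eq:an-mu-laws} say that $\mu$ is a holomorphic section of an affine bundle over the orbifold $\mathfrak H_z/\Gamma_{\mathrm{orb}}$. Its linear part is the weight $-1$ factor $1/(c\tau+d)$. The obstruction to a section is a class in $H^1$ of $\Gamma_{\mathrm{orb}}$ with coefficients in holomorphic functions on $\mathfrak H_z$. This group vanishes: $\mathfrak H_z$ is Stein, and the finite isotropy groups have trivial higher cohomology with characteristic-zero coefficients.
\item[(c)] The same argument applies to $\beta$. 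Its inhomogeneous terms $-6(1-\mu)^2/\tau$ and $-6\mu^2/\tau$ form a cocycle, and this cocycle condition is checked by expanding the relations $g_1^3=g_2^4=1$.
\end{itemize}

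\textbf{Step 2: the $g_0$-laws.} Composing the laws with $g_0=(g_1g_2)^{-1}$ is direct substitution. Since $g_1g_2$ acts on $\tau$ by $\tau\mapsto\tau+1$, $g_0$ acts by $\tau\mapsto\tau-1$. Substituting into \eqref{eq:an-mu-laws} gives $\mu(g_0z)=\mu$. In \eqref{eq:an-beta-laws} the quadratic terms cancel and the constants combine to $\beta(g_0z)=\beta+1$.

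\textbf{Step 3: the cusp expansion.}
\begin{itemize}
\item[(a)] On $\widetilde U_0$, the function $e^{-2\pi i\tau}$ is $g_0$-invariant, so it descends to $U_0^\times$. Because the cusp maps to the cusp with some width, it equals $t_c^{\,n}$ times a unit. The monodromy $\tau\mapsto\tau-1$, rather than $\tau\mapsto\tau-n$, forces $n=1$.
\item[(b)] Taking a logarithm of the unit, $\tau-s$ is a $g_0$-invariant function that is bounded near the cusp. By the removable-singularity theorem it is holomorphic in $t_c$ at zero, and I call it $h(t_c)$.
\item[(c)] Similarly, $\mu$ and $b_c=\beta+\tau$ are $g_0$-invariant. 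They descend to holomorphic functions on the punctured disc. Extending them across $t_c=0$ requires a growth bound, which I would get by choosing the sections in Step~1 with at most polynomial growth in $\Im\tau$. The holomorphic shift $\mu\mapsto\mu+\text{(periodic correction)}$ is available to enforce this. This growth control is the main obstacle: the cohomological existence argument gives no growth information by itself, so I would choose $\mu$ and $\beta$ locally near the cusp first and then patch with a Cousin-type argument.
\end{itemize}

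\textbf{Step 4: the sign of $\mathcal D$.}
\begin{itemize}
\item[(a)] Check that $\mathcal D$ is $\Gamma_{\mathrm{orb}}$-invariant. This uses $\Im(\tau(gz))=\Im\tau/|c\tau+d|^2$ together with the transformation of $\Im\mu$, and the cross terms cancel against $\Im$ of the quadratic terms in \eqref{eq:an-beta-laws}. Hence $\mathcal D$ descends to a continuous function on $B\setminus\{p_0\}$.
\item[(b)] Near $p_0$ we have $\Im\beta=-\Im s-\Im h+\Im b_c$. Here $\Im s\to+\infty$, and $6(\Im\mu)^2/\Im\tau\to0$. So $\mathcal D\to-\infty$ at the cusp, and $\mathcal D$ is bounded above on $B\setminus\{p_0\}$.
\item[(c)] Replacing $\beta$ by $\beta-iC$ for a large real constant $C$ preserves all the laws, since they contain only real additive constants. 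This shift makes $\mathcal D<0$ everywhere.
\end{itemize}
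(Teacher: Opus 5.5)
Your Steps~2 and~4 agree with the paper. There are, however, two real problems, and the first is a missing idea.

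\textbf{Cusp regularity of $\mu$ (Step~3(c)).} This is where the paper's proof does its real work. Step~1(b) produces only some equivariant $\mu$ on $\mathfrak H_z$, and it carries no information at $p_0$. Your Stein and finite-isotropy facts reduce the obstruction to $H^1(\CC,\mathcal L)$, which vanishes because $B\setminus\{p_0\}\cong\CC$ is non-compact. To repair the behaviour at the cusp you may only add sections over $B\setminus\{p_0\}$ of the line bundle $\mathcal L$ of solutions of $\nu(g_1z)=-\nu/\tau$, $\nu(g_2z)=\nu/\tau$. A correction that is merely $g_0$-periodic near the cusp destroys the $g_1,g_2$-laws elsewhere.

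So your proposed Cousin patching of the local solution $\mu=0$ on $\widetilde U_0$ against a global solution needs every section of $\mathcal L$ over $U_0^\times$ to split as a section over $B\setminus\{p_0\}$ plus one holomorphic in $t_c$. The obstruction group is $H^1(\PP^1,\bar{\mathcal L})$, where $\bar{\mathcal L}$ is $\mathcal L$ extended by the cusp regularity condition. This group is nonzero as soon as $\deg\bar{\mathcal L}\le-2$.

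The missing input is the identification $\bar{\mathcal L}\cong\cO_{\PP^1}(-p_0)$. The paper obtains it from the weight $-1$ meromorphic form $F_{\mathcal L}=E_4^2E_6^{1/2}/\Delta_{\mathrm{mod}}$. This form transforms by exactly these factors, has orders two and one at $z_1,z_2$, and is $t_c^{-1}$ times a unit at the cusp. Hence invariant quotients $\nu/F_{\mathcal L}$ descend holomorphically over $p_1,p_2$, and regularity at $p_0$ reads $\mathcal L_{p_0}=\mathfrak m_{p_0}F_{\mathcal L}$. Degree $-1$ is exactly the borderline case for $H^1=0$, so this computation cannot be skipped.

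Because the paper builds cusp regularity into the sheaf from the start, it needs no growth estimates. It glues the explicit local solutions $(2-\tau)/3$, $(1-\tau)/2$ and $0$ using $H^0=H^1=0$ for $\cO(-1)$. It then does the same for $\beta$, whose difference sheaf is $\cO_{\PP^1}$.

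\textbf{The choice of $J$ in Step~1(a).} Step~1(a) contains a concrete error. Suppose $J$ had ramification index two at $t=1$. Then $J\circ\pi-1728$ would vanish to order eight at $z_2$, so $\tau-\tau(z_2)$ would vanish to order four. Comparing leading terms under $s_2\mapsto -is_2$ then forces $\phi(g_2)'(\tau(z_2))=1$, so $g_2$ acts trivially rather than through an involution.

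Moreover, since the lift takes values in $\mathfrak H$, $J$ must be a polynomial, and ramification at $t=1$ forces $\deg J\ge2$. Then $q\sim t_c^{\deg J}$, and the cusp monodromy is translation by $\deg J$, not by one. The correct choice is the unramified $J(t)=1728t$. The orbifold orders of $\pi$ alone then give orders three and four over $0$ and $1728$, and the lift has orders one and two at $z_1,z_2$.

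\textbf{The conjugation step.} Your claim that the lift has the stated equivariance after conjugating in $\operatorname{PSL}_2(\ZZ)$ is not automatic. An order-three and an order-two element of $\operatorname{PSL}_2(\ZZ)$ need not have parabolic product. Also, $\tau\mapsto\tau+1$ and $\tau\mapsto\tau-1$ are not conjugate in $\operatorname{PSL}_2(\ZZ)$. The paper first shows that $\phi(g_0)$ is a nontrivial parabolic, using horoball containment and a winding-number argument. It then solves the trace condition $-k(p^2-pr+r^2)=1$, which determines $\phi$ up to conjugation by powers of $\sigma_1$.
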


\citet[Theorem~3.4]{Alpoge2026} establishes the transformation laws and cusp
behaviour recorded in Lemma~\ref{lem:an-period-functions}.
Appendix~\ref{app:period-torsor} gives a self-contained proof in the notation
of this paper, following the argument in
\citep[Sections~3.1--3.4]{Alpoge2026}.
The three parts of the lemma serve different purposes below.  The
transformation laws give the required period equivariance, the expressions
at $p_0$ determine the toric degeneration, and the inequality
$\mathcal D<0$ guarantees that the four period vectors span a real
four-dimensional lattice.

\subsection{The smooth torus family}

In the basis $\mathcal B$, the period matrix of
\citep[Definition~3.3]{Alpoge2026} is
\[
 \Pi(z)=
 \begin{pmatrix}
  6\mu(z)&\tau(z)&1&0\\
  \beta(z)&\mu(z)&0&1
 \end{pmatrix}.
\]
Changing the marking from $\mathcal B$ to $\mathcal B^+$ multiplies the
period matrix on the right by the same change-of-basis matrix $D$.
The right-hand $2$-by-$2$ block of $\Pi D$ is
$\operatorname{diag}(1,\tfrac12)$.  We restore the standard normalization
$[Z^+\mid I_2]$ by the fixed change of fibre coordinates
$S=\operatorname{diag}(1,2)$.  Accordingly, with respect to
$\mathcal B^+$, define
\begin{equation}\label{eq:an-period-matrix}
 \Pi^+(z)=
 \begin{pmatrix}
 6\mu(z)&\tau(z)&1&0\\
 2\beta(z)&2\mu(z)&0&1
 \end{pmatrix}.
\end{equation}
Equivalently, $\Pi^+=S\Pi D$.
The identity block is useful at the cusp: coordinatewise exponentiation then
quotients exactly by the period sublattice $K$, as used in
\eqref{eq:an-cusp-exponential-map} below.
Left multiplication by $S\in\operatorname{GL}_2(\CC)$ does not change the
isomorphism class of the torus.  In the basis of $\Lambda^+$, the
sublattice $\Lambda$ consists of the vectors whose $d$-coordinate is
even; hence $\Pi^+(\Lambda)=S\Pi(\Lambda)$.  It follows that the torus
defined by $\Pi$ and $\Lambda$ is isomorphic to
$\mathbb C^2/\Pi^+(\Lambda)$.  The inclusion
$\Pi^+(\Lambda)\subset\Pi^+(\Lambda^+)$ then induces the degree-two
isogeny
\[
 \mathbb C^2/\Pi^+(\Lambda)
 \longrightarrow \mathbb C^2/\Pi^+(\Lambda^+),
 \qquad
 \ker\cong\Lambda^+/\Lambda\cong\mathbb Z/2.
\]
Thus the holomorphic functions defining the periods are unchanged, up to a
fixed fibre coordinate transformation, while the integral torus is replaced
by its quotient under an isogeny of degree two.  This distinction is what
changes the integral cusp degeneration below.  Since
$\Pi^+=[Z^+\mid I_2]$,
\begin{equation*}
 \det_{\mathbb R}\Pi^+
 =-\det\Im Z^+
 =2\Im\tau
   \left(\Im\beta-\frac{6(\Im\mu)^2}{\Im\tau}\right)
 =2\Im\tau\,\mathcal D\ne0.
\end{equation*}
Thus $\Pi^+(z)\Lambda^+$ is a real lattice in $\CC^2$.

The automorphy factors required for descent are determined by the
transformation laws in Lemma~\ref{lem:an-period-functions}.  Explicitly, put
\begin{equation*}
 R_1(z)=
 \begin{pmatrix}
 -1/\tau&0\\2(1-\mu)/\tau&1
 \end{pmatrix},
 \qquad
 R_2(z)=
 \begin{pmatrix}
 1/\tau&0\\-2\mu/\tau&1
 \end{pmatrix}.
\end{equation*}
Multiplication of the displayed matrices, using
\eqref{eq:an-tau-laws}--\eqref{eq:an-beta-laws}, gives
\begin{equation}\label{eq:an-period-equivariance}
 R_j(z)\Pi^+(z)=\Pi^+(g_jz)A_j.
\end{equation}
This identity says that changing a lift in the orbifold cover carries each
marked period vector to the period vector prescribed by the integral
monodromy.  It is therefore the descent condition for the marked torus
family.
For a word $g$ in $g_1,g_2$, compose the corresponding $R_j$'s to
obtain $R_g$.  Iterating \eqref{eq:an-period-equivariance} around the
relations $g_1^3=g_2^4=1$, and using that $\Pi^+$ has rank two over
$\mathbb C$, shows that $R_g$ is well defined and
$R_{gh}(z)=R_g(hz)R_h(z)$.  The cusp laws also give $R_{g_0}=I$.

Let $\Lambda^+$ act on $\mathfrak H_z\times\CC^2$ by
$\lambda\cdot(z,\zeta)=(z,\zeta+\Pi^+(z)\lambda)$.
Equation \eqref{eq:an-period-equivariance} shows that the action
$g\cdot(z,[\zeta])=(gz,[R_g(z)\zeta])$
normalizes this lattice action.  After removing the two elliptic orbits, the
action on $\mathfrak H_z$ is free and properly discontinuous, and the
quotient base is $B^\circ$.  The resulting quotient is therefore a proper
holomorphic torus family
$J\to B^\circ:=B\setminus\{p_0,p_1,p_2\}$.
Indeed, over every compact subset of $B^\circ$ this is a smooth bundle
with compact real four-torus fibre, which is the properness assertion used
in the final gluing.

The period matrix already displays the cusp degeneration.  From
$\tau=s+h$ and $\beta=b_c-\tau$, one obtains
\begin{equation}\label{eq:an-cusp-period-splitting}
 Z^+(s)=sB_0^++C^+(t_c),
 \qquad
 B_0^+=
 \begin{pmatrix}0&1\\-2&0\end{pmatrix},
\end{equation}
where
\begin{equation}\label{eq:an-cusp-C}
 C^+(t_c)=
 \begin{pmatrix}
 6\mu&h\\2b_c-2h&2\mu
 \end{pmatrix}
\end{equation}
is holomorphic at $t_c=0$.  For
$K=\ZZ\widehat w\oplus\ZZ d$, the image of $B_0^+$ is
\begin{equation}\label{eq:an-index-two-image}
 B_0^+\ZZ^2
 =\ZZ\widehat w\oplus\ZZ(2d)\subset K,
\end{equation}
which has index two.  More explicitly, in the ordered bases
$(\overline{\widehat\gamma},\overline{\widehat u})$ of
$\Lambda^+/K$ and $(\widehat w,d)$ of $K$,
$B_0^+(a,b)=b\widehat w-2ad$.  This also fixes the marking used in the
topological calculation.

The mechanism behind this index is that the primitive vector
$\widehat\delta\in\Lambda$ becomes divisible in $\Lambda^+$, where
$\widehat\delta=2d$.  In the original lattice, the logarithmic coefficient
is
\[
 B_0=\begin{pmatrix}0&1\\-1&0\end{pmatrix},
 \qquad
 B_0\mathbb Z^2
 =K_\Lambda:=\mathbb Z\widehat w\oplus\mathbb Z\widehat\delta,
\]
so the cusp translation lattice is saturated.  Its action on the vertices
of the periodic triangulation has one orbit.  After adjoining $d$, the same
rational map sends $\overline{\widehat\gamma}$ to $-2d$ rather than to a
primitive vector.  The deck translations therefore form the proper
sublattice
$L=B_0^+\mathbb Z^2\subset K$, and
$K/L\cong\mathbb Z/2$.
The two cosets give two irreducible components of the central fibre.  Thus
the rational monodromy and period asymptotics are unchanged, but the quotient
indexing the components changes from the trivial group to $\mathbb Z/2$.
This is why the lattice used in the $S^6$ construction cannot give the
target topology here: the corresponding compactification has
vanishing integral second cohomology \citep{Alpoge2026}, whereas the two
components for $\Lambda^+$ supply the divisor data from which the generator of
$H^2(S^2\times S^4;\mathbb Z)$ is obtained.  Section~\ref{sec:topology}
makes this implication precise.

If $A_0=(A_1A_2)^{-1}$, direct multiplication gives
\begin{equation}\label{eq:an-cusp-monodromy-interface}
 \begin{aligned}
 A_0\widehat\gamma=\widehat\gamma-2d,
 A_0\widehat u=\widehat u+\widehat w,
 A_0\widehat w=\widehat w,
 A_0d=d.
 \end{aligned}
\end{equation}
In particular, $K$ is fixed by $A_0$, and $A_0-I$ factors through a
map $\Lambda^+/K\to K$.  In the bases used in
\eqref{eq:an-index-two-image}, the matrix of this map is precisely
$B_0^+$.  Thus $B_0^+$ is not an additional choice: it is the matrix of
the homomorphism induced by $A_0-I$, whose nonprimitive image must be
retained in the toric quotient.
In Section~\ref{sec:topology} we retain $\widehat\gamma$ and abbreviate
only $\widehat u,\widehat w$ to $u,w$.

\subsection{The toric model at the cusp}

The logarithmic expression \eqref{eq:an-cusp-period-splitting} places the
cusp in the setting of Mumford's toroidal construction
\citep[Sections~2--3]{Mumford1972}.  Here $K$ is the
cocharacter lattice of the dense torus, while
$L=B_0^+\mathbb Z^2$ is the lattice of deck translations.  A
$K$-periodic unimodular triangulation gives a smooth toric ambient space,
and placing all ray generators at height one makes the central divisor
reduced.  The quotient by the deck group identifies vertices precisely along
$L$-orbits, so its irreducible components are indexed by $K/L$.  Unlike
the saturated case, we therefore must keep $K$ and $L$ distinct
throughout the construction.

Choose a basis $e_1,e_2$ of a rank-two lattice $K$.  Let $\mathcal T$
be the periodic triangulation of $K_{\mathbb R}$ with triangles
\[
 \operatorname{conv}(v,v+e_1,v+e_2),
 \qquad
 \operatorname{conv}(v+e_1,v+e_2,v+e_1+e_2),\qquad v\in K.
\]

For the lattice $\Lambda^+$, we have
$e_1=\widehat w$, $e_2=d$, and
$L=B_0^+\mathbb Z^2=\mathbb Ze_1\oplus\mathbb Z(2e_2)$.
Thus $K/L\cong\mathbb Z/2$, and its two cosets are the two vertex orbits.
Write $v_{ij}=ie_1+je_2$ and $\widehat v=(v,1)$.  In a fundamental
strip for $L$, the four rays satisfy the primitive circuit
$\widehat v_{10}+\widehat v_{02}
=\widehat v_{01}+\widehat v_{11}$.
The triangulation $\mathcal T$ chooses the diagonal
$[v_{01},v_{11}]$.  Its horizontal translates identify an opposite pair
of boundary curves within each component, while the other four
boundary curves meet the opposite component, giving the configuration of
double curves used in Proposition~\ref{prop:characteristic-classes}.
Figure~\ref{fig:cusp-combinatorics} shows the fundamental strip, the quotient
dual graph, and the toric boundary of a normalized component.  Choose one
lift of each vertex of the
quotient dual graph.  For an oriented edge, its translation label is the
element of $L$ carrying the chosen lift of its terminal vertex to the
terminal vertex of the lifted edge.

\begin{figure}[tb]
\centering
\begin{tikzpicture}[
  every node/.style={font=\scriptsize},
  panel title/.style={font=\small},
  orbit zero vertex/.style={circle,draw,fill=black,inner sep=1.45pt},
  orbit one vertex/.style={circle,draw,fill=white,inner sep=1.45pt},
  within orbit edge/.style={black,line width=0.95pt},
  between orbit edge/.style={gray!70,line width=0.55pt},
  dual vertex/.style={circle,draw,fill=white,minimum size=5.7mm,inner sep=0pt},
  dual edge/.style={->,line width=0.55pt,shorten <=1pt,shorten >=1pt},
  graph label/.style={fill=white,inner sep=0.7pt},
  identification/.style={dashed,->,line width=0.55pt,shorten <=1.5pt,shorten >=1.5pt},
  >=stealth]
\path[use as bounding box] (-2.05,-1.52) rectangle (11.15,2.05);

\begin{scope}
  \node[panel title] at (0,1.80) {fundamental $L$-strip};
  \draw[dashed,gray!70] (-0.61,-0.49) rectangle (0.61,1.39);
  \draw[between orbit edge] (-0.48,-0.35)--(-0.48,1.25) (0.48,-0.35)--(0.48,1.25);
  \draw[between orbit edge] (0.48,-0.35)--(-0.48,0.45) (0.48,0.45)--(-0.48,1.25);
  \draw[within orbit edge] (-0.48,-0.35)--(0.48,-0.35) (-0.48,0.45)--(0.48,0.45) (-0.48,1.25)--(0.48,1.25);
  \node[orbit zero vertex] at (-0.48,-0.35) {};
  \node[orbit zero vertex] at (0.48,-0.35) {};
  \node[orbit one vertex] at (-0.48,0.45) {};
  \node[orbit one vertex] at (0.48,0.45) {};
  \node[orbit zero vertex] at (-0.48,1.25) {};
  \node[orbit zero vertex] at (0.48,1.25) {};
  \node[below right=1pt] at (0.48,-0.35) {$v_{10}$};
  \node[left=2pt] at (-0.48,0.45) {$v_{01}$};
  \node[right=2pt] at (0.48,0.45) {$v_{11}$};
  \node[left=2pt] at (-0.48,1.25) {$v_{02}$};
  \draw[->,line width=0.55pt] (-0.48,-0.78)--(0.48,-0.78);
  \node[right=2pt] at (0.48,-0.78) {$e_1$};
  \draw[->,line width=0.55pt] (-1.18,-0.35)--(-1.18,0.45) node[midway,left=2pt] {$e_2$};
  \node[align=center] at (0,-1.28) {filled/open: $j\equiv0/1\pmod2$\\black/gray: within/between vertex orbits};
\end{scope}

\begin{scope}[xshift=4.55cm]
  \node[panel title] at (0,1.80) {dual cell graph};
  \node[dual vertex] (L0) at (-1.14,0.05) {$\mathsf L_0$};
  \node[dual vertex] (U0) at (0,-0.82) {$\mathsf U_0$};
  \node[dual vertex] (L1) at (1.14,0.05) {$\mathsf L_1$};
  \node[dual vertex] (U1) at (0,0.92) {$\mathsf U_1$};
  \draw[dual edge] (L0) to[bend right=17] (U0);
  \draw[dual edge] (L0) to[bend left=17] (U0);
  \draw[dual edge] (L0) -- (U1);
  \draw[dual edge] (L1) to[bend left=17] (U1);
  \draw[dual edge] (L1) to[bend right=17] (U1);
  \draw[dual edge] (L1) -- (U0);
  \node[graph label] at (-0.83,-0.65) {$d_0$};
  \node[graph label] at (-0.50,-0.31) {$\upsilon_0$};
  \node[graph label] at (-0.70,0.66) {$h_0$};
  \node[graph label] at (0.83,0.75) {$d_1$};
  \node[graph label] at (0.50,0.41) {$\upsilon_1$};
  \node[graph label] at (0.70,-0.56) {$h_1$};
  \node at (0,-1.36) {faces $P_0,P_1$};
\end{scope}

\begin{scope}[xshift=9.10cm]
  \node[panel title] at (0,1.80) {boundary of $\dP$};
  \coordinate (c0) at (1.07,0); \coordinate (c1) at (0.535,0.927);
  \coordinate (c2) at (-0.535,0.927); \coordinate (c3) at (-1.07,0);
  \coordinate (c4) at (-0.535,-0.927);
  \coordinate (c5) at (0.535,-0.927);
  \draw[within orbit edge] (c0)--(c1);
  \draw[between orbit edge] (c1)--(c2)--(c3);
  \draw[within orbit edge] (c3)--(c4);
  \draw[between orbit edge] (c4)--(c5)--(c0);
  \node[right=2pt] at (0.80,0.53) {$C_1$};
  \node[above=2pt] at (0,0.927) {$C_2$};
  \node[left=2pt] at (-0.80,0.53) {$C_3$};
  \node[left=2pt] at (-0.80,-0.53) {$C_4$};
  \node[below=2pt] at (0,-0.927) {$C_5$};
  \node[right=2pt] at (0.80,-0.53) {$C_6$};
  \draw[identification] (0.80,0.50) .. controls (0.50,-0.15) and (-0.45,-0.18) .. node[pos=0.54,below=1pt] {$\phi$} (-0.80,-0.50);
\end{scope}
\end{tikzpicture}
\caption{The cusp for the index-two lattice.  In the left panel, the middle
horizontal edge is the diagonal $[v_{01},v_{11}]$ selected by
$\mathcal T$, and the dashed
rectangle is a fundamental region for
$L=\mathbb Ze_1\oplus\mathbb Z(2e_2)$.  The middle panel is the quotient
dual graph.  In the indicated edge order
$(d_0,\upsilon_0,h_0,d_1,\upsilon_1,h_1)$, its translation labels are
$(0,-e_1,-2e_2,0,-e_1,0)$.  In the right panel, the two heavy curves
$C_1,C_4$ are identified by $\phi$ to form a double curve of the
component, while $C_2+C_3+C_5+C_6$ maps to its intersection with the other
component.}
\label{fig:cusp-combinatorics}
\end{figure}
\FloatBarrier

The local fan construction is the same as in the saturated case.  What must
be checked here is that a proper finite-index sublattice still acts freely
and properly discontinuously and that the quotient remains proper over the
disc.  The following lemma records this form of the toric compactification;
compare \citep[Theorem~4.5]{Alpoge2026}.

\begin{lemma}[Toric compactification at the cusp]\label{lem:an-finite-index-cusp}
Let $\mathcal T$ be the periodic triangulation just defined, and let
$\Sigma$ be the fan in
$K_{\mathbb R}\oplus\mathbb R$ consisting of the cones over its faces
at height one.  Let $B_0:\ZZ^2\to K$ be injective with finite-index
image, and let $C(t)$ be a holomorphic $2$-by-$2$ matrix near zero.
The height character $t:=\chi^{(0,1)}$ extends to a holomorphic map
$t:X_\Sigma\to\CC$.  On the dense torus, write $x$ for the coordinate
in the two-dimensional torus with cocharacter lattice $K$, and define
\begin{equation}\label{eq:an-toric-action}
 \Psi_\lambda(x,t)
 =\bigl(e^{2\pi iC(t)\lambda}t^{B_0\lambda}x,t\bigr),
 \qquad \lambda\in\ZZ^2,
\end{equation}
where exponentials and powers are taken coordinatewise in the chosen basis
of $K$.  The maps extend to biholomorphisms.  For sufficiently
small $\epsilon>0$, their action on
$X_{\Sigma,\epsilon}:=\{|t|<\epsilon\}$ is free and properly
discontinuous, and
$X_{\Sigma,\epsilon}/\ZZ^2\to\{|t|<\epsilon\}$ is a proper map from a
smooth complex threefold.  Its central fibre is a reduced divisor with
normal crossings.
\end{lemma}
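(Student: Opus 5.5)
We follow Mumford's construction \citep[Sections~2--3]{Mumford1972}, keeping track of the distinct lattices $K$ and $L=B_0\ZZ^2$ throughout. The argument has five steps.

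\textbf{The toric variety and the height function.} The fan $\Sigma$ consists of the cones over the faces of $\mathcal T\times\{1\}$. Its support is the open upper half-space $K_{\RR}\times\RR_{>0}$ together with the origin. Each maximal cone is spanned by three vectors $(v,1),(v+e_1,1),(v+e_2,1)$, or their analogues, and these form a $\ZZ$-basis of $K\oplus\ZZ$. Hence $X_\Sigma$ is a smooth, locally finite-type toric threefold.

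The character $\chi^{(0,1)}$ pairs to $1$ with every ray generator $(v,1)$. It is therefore regular on every affine chart, giving a map $t:X_\Sigma\to\CC$. In a chart with coordinates $y_1,y_2,y_3$ it equals $y_1y_2y_3$. So $t^{-1}(0)$ is the reduced toric boundary and has normal crossings. Its components are the divisors $D_v$ for $v\in K$.

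\textbf{Extending the maps $\Psi_\lambda$.} First, $\Psi_\lambda$ is the composite of two maps. One is the torus translation by $e^{2\pi iC(t)\lambda}$, which depends holomorphically on $t$. The other is the automorphism of the dense torus induced by the lattice automorphism $(k,n)\mapsto(k+nB_0\lambda,n)$ of $K\oplus\ZZ$. This shear preserves $\Sigma$, because it maps $(v,1)$ to $(v+B_0\lambda,1)$ and $\mathcal T$ is $K$-periodic, and $B_0\lambda\in K$. Hence the shear extends to an automorphism of $X_\Sigma$ that permutes the charts.

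The torus translation extends chart by chart. In a chart it multiplies the coordinates by units, which are monomials in $e^{2\pi iC(t)\lambda}$ with $t$ itself regular. So it is an automorphism over each $t$-disc, provided the entries of $C$ are holomorphic on that disc. Restricting to $|t|<\epsilon$ handles this.

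The composition law $\Psi_\lambda\Psi_{\lambda'}=\Psi_{\lambda+\lambda'}$ holds on the dense torus because the action is coordinatewise and abelian. It then extends by density.

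\textbf{Freeness and proper discontinuity.} We introduce the tropicalization map
\[
\mathrm{val}(x,t)=-\log|x|/(-\log|t|)\in K_{\RR}
\]
on the locus $0<|t|<\epsilon$. Under $\Psi_\lambda$, this map shifts by $B_0\lambda$ up to an error of order $O(1/\log|t|^{-1})$, which is small when $\epsilon$ is small. Since $B_0$ is injective, nonzero $\lambda$ move $\mathrm{val}$ by at least a fixed distance. This gives proper discontinuity and freeness on the generic fibres.

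On the central fibre, $\Psi_\lambda$ sends each stratum $O(\sigma)$ to $O(\sigma+B_0\lambda)$. The translate $\sigma+B_0\lambda$ differs from $\sigma$ whenever $\lambda\neq0$, because $B_0\lambda$ is a nonzero lattice vector and the faces of $\mathcal T$ are bounded. So the action is free there as well.

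For proper discontinuity near the central fibre, we use the standard neighbourhoods $U_\sigma$ of the closed cells, indexed by cells $\sigma$ of $\mathcal T$. Their $\mathrm{val}$-images lie in small neighbourhoods of $\sigma$. Since only finitely many translates $\sigma+B_0\lambda$ meet a bounded set, only finitely many $\lambda$ satisfy $\Psi_\lambda(U_\sigma)\cap U_{\sigma'}\neq\emptyset$.

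\textbf{Properness of the quotient.} The group $L$ has finite index in $K$. Hence a compact fundamental domain $F\subset K_\RR$ for $L$ meets only finitely many cells of $\mathcal T$. The union of the corresponding closures $\overline{U_\sigma}\cap\{|t|\le\epsilon'\}$ is compact: each is compact in its toric chart, with $|x|$-bounds coming from the $\mathrm{val}$ constraints. This compact set surjects onto the quotient over $|t|\le\epsilon'$. Therefore the quotient map to the disc is proper.

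Freeness makes the quotient a smooth complex threefold. Normal crossings is a local property, so it descends to the quotient. The central fibre is then a reduced normal-crossings divisor whose components are indexed by $K/L$.

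\textbf{Main obstacle.} The main difficulty is making the uniform estimate near the central fibre precise. The $t$-dependent translation factor $e^{2\pi iC(t)\lambda}$ must not spoil the comparison between $\Psi_\lambda$ and the pure shear. We would handle this by choosing $\epsilon$ so that $|\log|e^{2\pi iC(t)\lambda}||\le c|\lambda|$ uniformly. That perturbation is then dominated by $|\log|t||\cdot|B_0\lambda|\ge c'|\lambda|\log\epsilon^{-1}$. This follows the argument of \citep[Theorem~4.5]{Alpoge2026}, with $K$ replacing the saturated lattice.
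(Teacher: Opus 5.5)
Your proposal is correct and follows essentially the same route as the paper. The extension comes from the shear $(k,n)\mapsto(k+nB_0\lambda,n)$ composed with the $t$-dependent torus translation. The normalized logarithmic coordinate moves by $B_t\lambda$, with the uniform bound $\|B_t\lambda\|\ge a\|\lambda\|$. Freeness on the central fibre comes from $O(P)\mapsto O(P+B_0\lambda)$. Proper discontinuity, properness, smoothness and the reduced normal crossings come from chart polydiscs, local finiteness, unimodularity and the height-one rays.

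One small fix is needed in the properness step. For $t\neq0$ the group acts on the logarithmic coordinate through the $t$-dependent lattice $B_t\ZZ^2$, not through $L=B_0\ZZ^2$. A fixed fundamental domain for $L$ therefore leaves an error $(B_0-B_t)\lambda$ that grows with $|\lambda|$. Instead, choose $\lambda$ with $B_t^{-1}y+\lambda\in[-\tfrac12,\tfrac12]^2$ and use a uniform upper bound on $\|B_t\|$, as the paper does.
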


\begin{proof}
The integral shear $(y,r)\mapsto(y+rB_0\lambda,r)$
preserves $\Sigma$: at height one it translates $\mathcal T$ by the
lattice element $B_0\lambda$.  The induced toric map, composed with the
torus translation in \eqref{eq:an-toric-action}, gives the desired
extension.  The shear is the identity on the fibre cocharacter lattice and
fixes $t$, so it commutes with the torus factor.  These extensions form a
$\ZZ^2$-action because $\lambda\mapsto C(t)\lambda$ is additive.

For $t\ne0$, put $y=\log|x|/\log|t|$, coordinatewise; this is the
normalized logarithmic coordinate.  Then
\begin{equation}\label{eq:an-log-coordinate-translation}
 y\longmapsto y+B_t\lambda,
 \qquad
 B_t=B_0-\frac{2\pi\Im C(t)}{\log|t|}.
\end{equation}
Since $B_0$ is invertible over $\mathbb R$, shrinking $\epsilon$
gives a constant $a>0$ such that
\begin{equation}\label{eq:an-singular-value-bound}
 \|B_t\lambda\|\geq a\|\lambda\|
 \qquad(0<|t|<\epsilon,\ \lambda\in\ZZ^2).
\end{equation}
Hence the action is free on the dense torus.  A boundary
orbit is indexed by a bounded face $P$ of $\mathcal T$, and
$\Psi_\lambda$ carries it to the orbit indexed by
$P+B_0\lambda$.  No nonzero translation stabilizes a bounded face, so
the action is free on the boundary as well.

For proper discontinuity, let
$\sigma=\{v_0,v_1,v_2\}$.  Unimodularity identifies its chart with
$\CC^3$, with coordinates $z_0,z_1,z_2$ satisfying
\begin{equation}\label{eq:an-local-t-equation}
 t=z_0z_1z_2.
\end{equation}
If $\theta_i^\sigma$ are the barycentric coordinates of $\sigma$, then
on the dense torus
\begin{equation}\label{eq:an-barycentric-chart}
 \log|z_i|=\log|t|\,\theta_i^\sigma(y).
\end{equation}
Write
$U_\sigma(S)=\{|z_i|<S\text{ for }i=0,1,2\}
\cap X_{\Sigma,\epsilon}$.
Every point of $X_{\Sigma,\epsilon}$ lies in the closed unit polydisc of
some chart.  For a torus point, choose a triangle containing $y$.  For a
point on the orbit of a face $P$, choose a triangle containing $P$ whose
quotient cone contains the negative logarithmic coordinate normal to that
orbit.  Equation \eqref{eq:an-barycentric-chart} gives the assertion in
both cases.

If
$\Psi_\lambda U_\sigma(2)\cap U_{\sigma'}(2)\ne\varnothing$, density
gives such an intersection with $t\ne0$.  Formula
\eqref{eq:an-barycentric-chart} places the two normalized logarithmic
coordinates in
fixed bounded enlargements of $\sigma$ and $\sigma'$; their difference
is $B_t\lambda$.  The lower bound
\eqref{eq:an-singular-value-bound} therefore bounds $\lambda$ for each
fixed pair $(\sigma,\sigma')$.  The sets $U_\sigma(2)$ form an open
cover.  Given two compact sets, choose
finite subcovers of each by these sets and apply the bound to the finitely
many resulting pairs of triangles.  Only finitely many group elements can
move one compact set to meet the other, which proves proper discontinuity.

For properness over the disc, first suppose $t\ne0$ and choose
$\lambda$ so that
$B_t^{-1}y+\lambda\in[-1/2,1/2]^2$.
Uniform upper bounds on $B_t$ then put the translated logarithmic coordinate
in a fixed ball.  At $t=0$, use the finite set
$K/B_0\ZZ^2$: translating a face makes one of its vertices belong to a
fixed set of coset representatives.  Local finiteness of $\mathcal T$
then leaves only finitely many adjacent triangles.  Consequently, for each
$0<\epsilon_1<\epsilon$, every orbit over
$|t|\leq\epsilon_1$ meets
\begin{equation*}
 \mathcal K_0=
 \bigcup_{\sigma\in\mathcal F}
 \{|z_0|,|z_1|,|z_2|\leq1,\ |t|\leq\epsilon_1\},
\end{equation*}
where $\mathcal F$ is a finite family of triangles.  This is compact in
$X_{\Sigma,\epsilon}$, and its image is the full inverse image of the
closed subdisc.  The quotient map to the disc is therefore proper.

Each of the two triangle types has determinant $1$ or $-1$, so
$\Sigma$ is unimodular and $X_\Sigma$ is smooth.  The free quotient is
smooth.  The fan is countable, so $X_\Sigma$ has a countable toric atlas;
the quotient map is open, and the quotient is therefore second countable.
Since every ray generator has height one,
\eqref{eq:an-local-t-equation} gives a reduced normal crossings central
fibre.
\end{proof}

In this application the lemma's local height coordinate $t$ is
$t_c$.  Apply Lemma~\ref{lem:an-finite-index-cusp} to
\eqref{eq:an-cusp-period-splitting}--\eqref{eq:an-cusp-C}.  Denote the
quotient by $N_0\to\mathbb D_{t_c}$.
By \eqref{eq:an-index-two-image}, the vertices have two orbits, so the
central fibre has two irreducible components.  The identity point
$(1,1)$ extends across the toric model.  Indeed, in the chart for the
triangle $\{0,e_1,e_2\}$, the coordinates are
$z_0=t_c/(x_1x_2)$, $z_1=x_1$, and $z_2=x_2$.
The curve $(x_1,x_2,t_c)=(1,1,t_c)$ tends to $(0,1,1)$.  Its image
is a holomorphic section over the cusp disc, which we use as the origin in
the gluing.

For a height-one vertex $v$, let $E_v$
be the corresponding toric divisor and let $D_{[v]}$ be its image in the
quotient.  The star of $v$ has six primitive rays and is the complete fan
of $\dP$, so $E_v\cong\dP$ is compact and normal.  The map
$E_v\to D_{[v]}$ is surjective, is proper with finite fibres by proper
discontinuity, and is one-to-one over the dense orbit because no nonzero
translation fixes $v$.  It is therefore finite and birational, hence is
the normalization of $D_{[v]}$.  The two edges in the
$\pm\widehat w$-directions join vertices whose second coordinates are
congruent modulo two.  Their opposite boundary curves are identified to give
a double curve of $D_{[v]}$.  The other four edge directions join vertices
whose second coordinates are not congruent modulo two and map to the
intersection with the other component.  In a
triangle chart $t_c=z_0z_1z_2$, two vertices agree modulo two in their
second coordinates and the third does not, so the corresponding divisors
have local equations
$z_0z_1=0$ and $z_2=0$.  Thus the nonnormal double curve of
$D_{[v]}$ is ordinary and unramified, including at the triple points.

\subsection{The two multiple fibres}

The change of lattice must also be compatible with the finite monodromies.
Over the orbifold coordinate $s_j$, the linear action fixes the origin of
the torus and would produce a singular quotient.  The translation by
$v_j/m_j$ is chosen to produce a free affine action.  We must verify that this
action has order $m_j$, that its quotient has central divisor $m_jS_j$
with $S_j$ smooth, and that away from the central fibre it agrees with the
family $J$.  These are the properties needed to retain the multiplicities
three and four after passing from $\Lambda$ to $\Lambda^+$.

We apply to $\Lambda^+$ the logarithmic-transform construction described in
\citep[Definition~5.3 and Theorem~5.4]{Alpoge2026}, and verify freeness and
the identifications over the punctured discs.

On a small disc about $z_j$, the torus family defined by
\eqref{eq:an-period-matrix} extends across $s_j=0$.  Define a holomorphic
action on it by
\begin{equation}\label{eq:an-holomorphic-affine-action}
 \widetilde g_j(z,\zeta)
 =\left(g_jz,
 R_j(z)\zeta+\Pi^+(g_jz)\frac{v_j}{m_j}\right).
\end{equation}
Equation \eqref{eq:an-period-equivariance} makes this action well defined
modulo the periods.  In flat coordinates it is
$\widetilde g_j(z,x)=(g_jz,A_jx+v_j/m_j)$.
By \eqref{eq:an-finite-orders}, its $m_j$-th power is translation by
$v_j\in\Lambda^+$, hence the identity on the torus.

By \eqref{eq:an-finite-orders}, $\gamma\circ A_j^k=\gamma$.  A fixed point
of the $k$-th power can occur
only over $s_j=0$, and it would imply
$(A_j^k-I)x+kv_j/m_j\in\Lambda^+$.
Applying $\gamma$ would give
$k\gamma(v_j)/m_j\in\ZZ$.  For $j=1$, the two values are
$1/3,2/3$; for $j=2$, the three values are
$-1/4,-1/2,-3/4$.  None is integral.  Thus both cyclic actions, including
all their nonidentity elements, are free.

Let $N_j$ be the quotient by this cyclic action.  It is smooth and proper
over the disc with coordinate $t_j=s_j^{m_j}$.
Indeed, before quotienting the family is, in real flat coordinates, a
bundle of compact real four-tori; over every compact subdisc its total
space is compact, and a finite quotient preserves properness.  Since the
quotient is \'etale and
$\operatorname{div}(s_j^{m_j})=m_j\{s_j=0\}$, its central divisor is
$m_jS_j$, with $S_j$ smooth and reduced.

To glue this model to $J$, we identify their restrictions over the
punctured disc.  On a simply connected sector of the punctured $s_j$-disc,
choose a branch of the logarithm and put
$\ell(z)=\log s_j(z)/(2\pi i)$.
On the $g_j$-translates of the chosen sector, choose compatible branches
satisfying $\ell(g_jz)=\ell(z)-1/m_j$.  Translation in the torus by
$\ell(z)\Pi^+(z)v_j$
conjugates \eqref{eq:an-holomorphic-affine-action} to the linear action.
Indeed, $A_jv_j=v_j$ and
\eqref{eq:an-period-equivariance} give
$\Pi^+(g_jz)v_j/m_j+\ell(g_jz)\Pi^+(g_jz)v_j
=R_j(z)\ell(z)\Pi^+(z)v_j$.
Changing the logarithm changes the translation
$\ell(z)\Pi^+(z)v_j$ by the period $\Pi^+(z)v_j$.  It therefore descends to
the torus and identifies $N_j\setminus S_j$ with $J$ over the punctured
disc.  The zero section of $J$ extends across the cusp as the holomorphic
section constructed above; we use this zero section in all three
identifications over the punctured discs.

\subsection{Identification over the punctured cusp and global gluing}

The torus family $J$ is written in additive period coordinates, whereas
the toric model $N_0$ is written in multiplicative coordinates with
cocharacter lattice $K$.  To glue them, we must show that exponentiation
first quotients by $K$ and the cusp monodromy and that the residual deck
action is exactly the action used in the toric construction.  This identifies
the two models over a punctured cusp disc.

On the chosen component $\widetilde U_0$, consider
\begin{equation}\label{eq:an-cusp-exponential-map}
 (s,\zeta)\longmapsto
 \bigl(t_c=e^{2\pi is},\ x=e^{2\pi i\zeta}\bigr),
\end{equation}
where the second exponential is coordinatewise.  It quotients the last two
period columns of \eqref{eq:an-period-matrix}.  It also quotients the base
deck transformation because $s(g_0z)=s(z)-1$ and
$R_{g_0}=I$.  Its kernel is the subgroup $H$ generated
by $g_0$ and the period lattice
$K=\mathbb Z\widehat w\oplus\mathbb Zd$.  Equations
\eqref{eq:an-cusp-monodromy-interface} give
$A_0|_K=I$ and $(A_0-I)\Lambda^+\subset K$, so $H$ is normal in the
deck group over the cusp,
$G_c=\Lambda^+\rtimes_{A_0}\langle g_0\rangle$.  The quotient
$G_c/H$ is $\Lambda^+/K\cong\mathbb Z^2$.  By
\eqref{eq:an-cusp-period-splitting}, it acts by
$x\longmapsto e^{2\pi iC^+(t_c)\lambda}
t_c^{B_0^+\lambda}x$, where $\lambda\in\ZZ^2$,
which is the toric action \eqref{eq:an-toric-action}.  Hence
\eqref{eq:an-cusp-exponential-map} induces a biholomorphism between the
restriction of $J$ to a punctured cusp disc and the punctured restriction
of $N_0$.

We use the gluing procedure of \citep[Construction~6.1]{Alpoge2026},
replacing the cusp model associated with $\Lambda$ by the one constructed
above for $\Lambda^+$.

\begin{proof}[Proof of Proposition~\ref{prop:analytic-model}]
Choose pairwise disjoint coordinate discs $U_j$ around the three marked
points.  Glue $J$, $N_0$, $N_1$, and $N_2$ by the three
biholomorphisms over $U_j\setminus\{p_j\}$, and call the result
$Y$.  The transition maps are biholomorphic, and distinct local models do
not overlap, so they give a smooth complex-manifold atlas.

The glued space is Hausdorff.  Two points with different images in $B$
are separated by inverse images
of disjoint base neighbourhoods.  Two points over the same base point lie
in a single Hausdorff model: over each $U_j$ the glued space is exactly
$N_j$, with no second copy of the central fibre, while away from the
marked points it is $J$.  It is second countable because only finitely many
second-countable spaces were glued along open subsets.

The map $f:Y\to B$ is locally one of the proper maps constructed above.
Properness is local on the target for maps between locally compact
Hausdorff spaces, so $f$ is proper.  Since $B$ is compact, $Y$ is
compact.  The space $J$ is connected, and every local model meets it over a
punctured disc, so $Y$ is connected.  The descriptions of the three
special fibres proved above give all the remaining assertions of the
proposition.
\end{proof}

%% file: sections/topology.tex
\section{Fundamental group and integral homology}\label{sec:topology}

This section proves Propositions~\ref{prop:degree-two-mv} and
\ref{prop:integral-topology}.
We adapt the deformation-retraction and Mayer--Vietoris calculations for
$\Lambda$ in
\citep[Sections~7.1--7.6 and Appendix~A]{Alpoge2026}.  For $\Lambda^+$,
the cusp fibre has two components and its boundary cohomology contains
additional classes of order two.  Integral coefficients are therefore
essential.
Throughout this section, all matrices act on column vectors.
On the homology lattice of a smooth fibre we continue to use the basis
$\Lambda^+=\mathbb Z\langle \widehat\gamma,u,w,d\rangle$,
and on its dual the basis
$V^+=\mathbb Z\langle\gamma,u^\vee,w^\vee,\Delta\rangle$, dual to
$(\widehat\gamma,u,w,d)$.  We use the contragredient convention
$T_j(\chi)=\chi\circ A_j^{-1}$, or equivalently
$T_j=(A_j^{-1})^{\mathsf T}$, for $\chi\in V^+$.
This is the action used in the cohomological Wang and Cartan--Leray spectral
sequences below.
For $j=0$, this is the cohomological monodromy associated with the clockwise
cusp meridian used below.  The cusp monodromy
$M_0:=A_0=(A_1A_2)^{-1}$ is
\begin{equation}\label{eq:cusp-monodromy-homology}
 M_0\widehat\gamma=\widehat\gamma-2d,\qquad
 M_0u=u+w,\qquad M_0w=w,\qquad M_0d=d.
\end{equation}
Under this convention, the action on the dual lattice is
\begin{equation}\label{eq:cusp-monodromy-cohomology}
 T_0\gamma=\gamma,\qquad T_0u^\vee=u^\vee,\qquad
 T_0w^\vee=w^\vee-u^\vee,\qquad T_0\Delta=\Delta+2\gamma.
\end{equation}

Choose pairwise disjoint closed discs about the three exceptional points of
the base.  On the cusp disc write $t=t_c$ for the local coordinate from
Section~\ref{sec:analytic-construction}, so that $p_0$ is given by $t=0$.
For a sufficiently small $\eta>0$, let
$A=f^{-1}(\{|t|\leq\eta\})$ be the closed cusp neighbourhood, let
$Y_{\mathrm{fin}}$ be the inverse
image of the complementary closed disc, and put
$M=A\cap Y_{\mathrm{fin}}=\partial A=\partial Y_{\mathrm{fin}}$.

\subsection{The cusp neighbourhood and its boundary}

At the cusp let
$K=\mathbb Z\langle e_1,e_2\rangle
=\mathbb Z\langle w,d\rangle$ and
$\Gamma=\mathbb Z\langle \widehat\gamma,u\rangle$,
and put $\Gamma'=2\Gamma$ and
$G=\Gamma/\Gamma'\cong(\mathbb Z/2)^2$.  This is the deck group of the
intermediate $\Gamma'$-quotient used below.  The cocharacter-translation
map is the injection
\begin{equation}\label{eq:cusp-B-marking}
 B_0^+:\Gamma\longrightarrow K,
 \qquad B_0^+(\widehat\gamma)=-2d,\qquad B_0^+(u)=w.
\end{equation}
Its image is
$L=\mathbb Zw\oplus\mathbb Z(2d)$.  Write
$T_B=K_{\mathbb R}/L$ and $T_f=K_{\mathbb R}/K$.
The real-linear extension of $B_0^+$ is an isomorphism
$\Gamma_{\mathbb R}\to K_{\mathbb R}$ carrying $\Gamma$ onto $L$, and
hence identifies $\Gamma_{\mathbb R}/\Gamma$ with $T_B$.
The chosen homology marking identifies a nearby real four-torus with
$F=T_B\times T_f$.  The
clockwise cusp meridian is denoted by $a_0$; this convention agrees with
$g_0=(g_1g_2)^{-1}$ and with the logarithmic coordinate
$t=e^{2\pi i s}$, for which $s(g_0z)=s(z)-1$.
For a monodromy-invariant fibre one-cycle $c$, write
$\operatorname{sw}_c(a_0)$ for the suspension torus obtained by
transporting $c$ once around this meridian, oriented by
$a_0\wedge c$.

\begin{lemma}[Comparison of the cusp models]
\label{lem:cusp-model-comparison}
Put $C_{\mathrm{lim}}=C^+(0)$.  After shrinking the cusp disc, the families
$C_\rho(t)=(1-\rho)C^+(t)+\rho C_{\mathrm{lim}}$, for
$0\leq\rho\leq1$, and, in real flat coordinates
$a,b\in\mathbb R^2$, the maps
$\zeta=(sB_0^++C_{\mathrm{lim}})a+b\longmapsto
\zeta-\sigma C_{\mathrm{lim}}a$, for $0\leq\sigma\leq1$,
define homotopies of the quotient torus family over the punctured disc.
They conjugate the corresponding deck actions and extend continuously
through the toric central fibre.  Each map preserves every toric face and
sends cosets of the corresponding torus isotropy subgroup to cosets of that
subgroup.  The extensions are $G$-equivariant.  The extension obtained from
$C_\rho(t)$ is the identity on the central fibre.  Consequently, the
restrictions of the
deformation retractions
supplied by
Lemma~\ref{lem:cusp-retraction} are homotopic on a nearby fibre for the actual
period block and for the model $sB_0^+$.
\end{lemma}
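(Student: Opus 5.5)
The plan is to prove each assertion of Lemma~\ref{lem:cusp-model-comparison} at the level of the toric action \eqref{eq:an-toric-action}, using the fact that both homotopies only modify the holomorphic part $C$ of the period block. The logarithmic part $sB_0^+$ and the combinatorics of $\Sigma$ stay fixed throughout.

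\textbf{Step 1 (the family $C_\rho$).} For each $\rho$, $C_\rho(t)$ is holomorphic in $t$ and satisfies $C_\rho(0)=C_{\mathrm{lim}}$. Hence $\sup_{\rho,|t|<\epsilon}\|\Im C_\rho(t)\|$ is finite, and the matrices $B_t$ in \eqref{eq:an-log-coordinate-translation} obey the lower bound \eqref{eq:an-singular-value-bound} uniformly in $\rho$ after one shrinking of the disc. The proof of Lemma~\ref{lem:an-finite-index-cusp} therefore runs uniformly in $\rho$ and gives a continuous family of free, properly discontinuous $\mathbb Z^2$-actions on a fixed $X_{\Sigma,\epsilon}$. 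To get a conjugating homotopy I would use the torus-valued map $x\mapsto e^{2\pi i(C_\rho(t)-C^+(t))a}x$, with $a=a(x,t)$ the real flat $\Gamma$-coordinate, namely $a=B_t^{-1}y$ in normalized logarithmic coordinates. This is well defined and conjugates the $C^+$ action to the $C_\rho$ action, because shifting $a$ by $\lambda$ multiplies the factor by exactly the difference of the two actions. The factor equals $\exp\bigl(2\pi i\rho(C_{\mathrm{lim}}-C^+(t))a\bigr)$, and $C_{\mathrm{lim}}-C^+(t)=O(|t|)$. On a fundamental domain $a$ is bounded, and in general $|a|=O(|\log|x||)$. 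So on each polydisc chart $U_\sigma(2)$, after using the deck action to bring $a$ into a bounded set, the factor tends to $1$ as $t\to0$. The map therefore extends continuously by the identity on the central fibre. It is a torus multiplication in each chart, so it preserves every orbit closure (toric face), and it multiplies by elements of the dense torus, which send cosets of each isotropy subgroup to cosets. $G=\Gamma/2\Gamma$-equivariance holds because the formula commutes with the $\Gamma$-action, hence with the $\Gamma'$-action.

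\textbf{Step 2 (the shear $\sigma$).} Once $C=C_{\mathrm{lim}}$ is constant, the map $\zeta\mapsto\zeta-\sigma C_{\mathrm{lim}}a$ becomes the torus multiplication $x\mapsto e^{-2\pi i\sigma C_{\mathrm{lim}}a}x$ in exponential coordinates. It intertwines the action with constant $C_{\mathrm{lim}}$ and the action with constant $(1-\sigma)C_{\mathrm{lim}}$, again by additivity in $a$. At $\sigma=1$ this gives the model $sB_0^+$. Unlike in Step 1, the factor does not tend to $1$ at $t=0$. For the extension I would note that on a face orbit indexed by $P$, the coordinate $a$ modulo $\Gamma$ has a limit determined by the position in the relative interior of $P$ (via $B_0^+a\in P$). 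The factor therefore converges to a well-defined element of the dense torus. On the orbit $O_P$ this acts through the quotient torus, whose kernel is the isotropy subgroup of $P$. This yields a continuous extension preserving faces and isotropy cosets.

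\textbf{Step 3 (conclusion).} The homotopies of actions, together with their continuous, face-preserving, $G$-equivariant extensions, compose to a homotopy between the quotient families for $C^+$ and for the constant-free model. The retractions of Lemma~\ref{lem:cusp-retraction} are defined face-by-face via Usui's shrinking map. I would then argue that they are natural with respect to maps that preserve faces and isotropy cosets, so their restrictions to a nearby fibre are homotopic.

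The main obstacle is continuity at the central fibre in Step 2. Near lower-dimensional strata the limit of $a$ is only well defined modulo the isotropy directions. I would verify this chart by chart using \eqref{eq:an-barycentric-chart}, showing that the ambiguity is exactly absorbed by the isotropy subgroup.
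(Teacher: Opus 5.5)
Your Step~1 is the paper's own argument. The multiplier $\exp(2\pi i\rho(C_{\mathrm{lim}}-C^+(t))a)$ tends uniformly to $1$ because $a$ stays bounded on each chart, so the map extends by the identity.

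There is a genuine gap in Step~2, at the point you flag as the main obstacle, and the proposed chart-by-chart check would fail. A point of the orbit $O_e$ of an edge $e=[v_0,v_0+n_e]$ does not determine where $y$, and hence $a$, converges inside $e$. In the chart of $[0,e_1,e_2]$, take $t\downarrow 0$ along the path $z_1=t^{s}$, $z_2=c$, $z_0=t^{1-s}/c$. For every $s\in(0,1)$ it converges to $(0,0,c)\in O_{[0,e_1]}$, while $y\to se_1$ and $a\to s(B_0^+)^{-1}e_1$.

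The limits of $e^{-2\pi i\sigma C_{\mathrm{lim}}a}x$ along these paths differ by $\exp(-2\pi i\sigma s\,C_{\mathrm{lim}}(B_0^+)^{-1}n_e)$. This lies in the complexified isotropy torus of $O_e$, whose Lie algebra is $\mathbb C n_e$, for all $s$ only if $n_e$ is an eigenvector of $C_{\mathrm{lim}}(B_0^+)^{-1}$. From the formula for $C^+$, at $t_c=0$ one has $C_{\mathrm{lim}}(B_0^+)^{-1}=\begin{pmatrix}h&-3\mu\\2\mu&h-b_c\end{pmatrix}$. Requiring $e_1$, $e_2$ and $e_2-e_1$ all to be eigenvectors forces $\mu(0)=b_c(0)=0$. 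Nothing in the construction arranges this; in particular the additive constant chosen for $\beta$ enters $b_c(0)$.

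So the ambiguity is not absorbed by the isotropy subgroup. In the analytic topology of the toric model, your map has no continuous extension along double curves, so no refinement of the chart computation can close the gap. Your sentence about ``the position in the relative interior of $P$'' also mixes up two parameters: radially, a point of $O_P$ corresponds to a point of the \emph{dual} cell of $P$, not to a point of $P$.

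The paper never takes pointwise limits of $a$ on the toric central fibre. It works in the polar-coordinate model of Nakayama--Ogus, which underlies Usui's retraction and the collapse description $W=(T_B\times T_f)/{\sim}$, and in which the dual-cell coordinate $y$ is retained over the central fibre. There $m_\sigma(y)=\exp(-2\pi i\sigma C_{\mathrm{lim}}(B_0^+)^{-1}y)$ acts in two ways:
\begin{itemize}
\item through its modulus, whose logarithmic displacement is $o(|\log|t||)$, so the face is unchanged;
\item through its phase, a translation of $T_f$ that descends to $T_f/T_P$.
\end{itemize}
Continuity and boundedness of $m_\sigma$ on closed dual cells give agreement on common faces. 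The identity $m_\sigma(y+B_0^+\lambda)e^{2\pi iC_{\mathrm{lim}}\lambda}=e^{2\pi i(1-\sigma)C_{\mathrm{lim}}\lambda}m_\sigma(y)$ gives the conjugation of deck actions and $G$-equivariance.

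To repair Step~2, move to that model and evaluate the twist at the dual-cell coordinate. Step~3 then reduces, as in the paper, to taking $\rho=\sigma=1$.
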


\begin{proof}
For the first family put
$B_{t,\rho}=B_0^+-2\pi\Im C_\rho(t)/\log|t|$.
Uniform invertibility of $B_{t,\rho}$, for $0<|t|<\epsilon$ and
$0\leq\rho\leq1$, follows after shrinking $\epsilon$ from the
invertibility of $B_0^+$.  On a nearby fibre every point has real flat
coordinates
$
 \zeta=(sB_0^++C^+(t))a+b
$
with $(a,b)\in\mathbb R^2\oplus\mathbb R^2$, unique modulo
$\mathbb Z^2\oplus\mathbb Z^2$ in the marked real torus.  Replacing
$(a,b)$ by another integral representative changes the image by the
corresponding period for $C_\rho(t)$.  Hence
\begin{equation}\label{eq:first-cusp-comparison}
 (sB_0^++C^+(t))a+b\longmapsto
 (sB_0^++C_\rho(t))a+b
\end{equation}
is independent of the representative and conjugates the full lattice and
deck actions.

Choose $a$ in a fixed compact fundamental parallelogram.  Since
$C_\rho(t)-C^+(t)=O(t)$, the torus multiplier in
\eqref{eq:first-cusp-comparison} tends uniformly to one as $t\to0$.  Its
logarithmic modulus divided by $\log|t|$ tends to
zero, so it changes neither the limiting fan face nor the corresponding
torus isotropy subgroup.  In every unimodular toric chart it therefore extends continuously
as the identity on each central orbit.  The extensions agree on chart
overlaps because \eqref{eq:first-cusp-comparison} is already a single map on the
dense quotient and the toric space is Hausdorff.

For the constant twist, put
\[
 m_\sigma(y)=\exp\!\left(
 -2\pi i\sigma C_{\mathrm{lim}}
 \bigl((B_0^+\otimes\mathbb R)^{-1}y\bigr)\right)
 \quad(y\in K_{\mathbb R}).
\]
The same deformation argument applies after retaining the finite-index
sublattice $L$; compare \citep[Lemmas~7.5--7.10]{Alpoge2026}.  On the
$K_{\mathbb R}$-cover of $T_B$, where $y=B_0^+a$, one has
\begin{equation}\label{eq:constant-comparison-equivariance}
 m_\sigma(y+B_0^+\lambda)e^{2\pi iC_{\mathrm{lim}}\lambda}
 =e^{2\pi i(1-\sigma)C_{\mathrm{lim}}\lambda}m_\sigma(y),
 \qquad \lambda\in\Gamma.
\end{equation}
Thus multiplication by $m_\sigma(y)$ conjugates the
$C_{\mathrm{lim}}$-twisted deck action to the action with twist
$(1-\sigma)C_{\mathrm{lim}}$, and commutes with the $K$-periods.

In the polar model of a toric chart, the positive part of $m_\sigma(y)$
stays in the same face and its compact part translates the phase.  If
$T_P\subset T_f$ is the isotropy subgroup of a face $P$, this
translation induces a translation of $T_f/T_P$; see
\citep[Proposition~3.1]{NakayamaOgus2010}.  Since $m_\sigma$ is continuous
and bounded on every closed dual cell, these maps extend over the toric
boundary and agree on common faces.  Their logarithmic displacement is
$o(|\log|t||)$, so the extension has the same limiting face as the
map on the punctured family.  Equation
\eqref{eq:constant-comparison-equivariance} also gives $G$-equivariance.
Taking $\rho=\sigma=1$ reduces the family to the
model with period block $sB_0^+$.
\end{proof}

\begin{lemma}[Deformation retraction of the cusp neighbourhood]\label{lem:cusp-retraction}
Let $W=f^{-1}(p_0)\subset A$ be the two-component central fibre determined
by the periodic triangulation $\mathcal T$.  For
a sufficiently small closed cusp disc, the following assertions hold.
\begin{enumerate}
\item The inclusion $W\hookrightarrow A$ is a strong deformation
retract.
\item The restriction of the deformation retraction to a nearby fibre is
homotopic to the collapse map
$c:T_B\times T_f\to W$.
Over the interior of a dual two-cell it leaves $T_f$ unchanged; over the
dual edge corresponding to an edge of $\mathcal T$ with primitive
direction $n$, it quotients by the circle $S_n\subset T_f$ generated by
$n$; over a dual vertex it collapses all of $T_f$.
\item The inclusion of the boundary induces
\begin{equation}\label{eq:cusp-pi1-map}
 \pi_1(M)\longrightarrow\pi_1(A)=\mathbb Z\langle \widehat\gamma,u\rangle,
 \qquad
 \widehat\gamma\mapsto\widehat\gamma,\quad
 u\mapsto u,\quad w,d,a_0\mapsto1.
\end{equation}
\item The suspension tori $\xi_w=\operatorname{sw}_{w}(a_0)$ and
$\xi_d=\operatorname{sw}_{d}(a_0)$
bound embedded solid tori in $A$.
\end{enumerate}
\end{lemma}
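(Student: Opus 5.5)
Lemma~\ref{lem:cusp-model-comparison} reduces everything to the model period block $sB_0^+$. For that block the cusp neighbourhood is a $\Gamma$-quotient of a toric space, and I will work with the intermediate $\Gamma'$-quotient, whose deck group is $G=(\mathbb Z/2)^2$. The plan has four steps.

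\textbf{Step 1: the retraction (item~1).} Following \citep[Theorem~5.2]{Usui2001} and the polar-coordinate model \citep[Proposition~3.1]{NakayamaOgus2010}, the Kato--Nakayama space of the log structure on $X_{\Sigma,\epsilon}$ has a moment-type map to the height-truncated cone over $\mathcal T$, and Usui's monoid action by $\mathbb R_{\ge0}$ gives a shrinking flow that pushes $|t|$ to zero. This flow commutes with the shear $\Psi_\lambda$ for the model block, because the model twist is trivial and the map is coordinatewise monomial. The flow is therefore $\Gamma$-equivariant and descends to $A$. The properness established in Lemma~\ref{lem:an-finite-index-cusp} (a compact fundamental domain $\mathcal K_0$) makes the descended flow continuous up to time $\infty$. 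Its endpoint lies in $W$ and it fixes $W$ pointwise, so it is a strong deformation retraction. Since the $G$-action on the $\Gamma'$-quotient is free, I can equally carry out the construction there and pass to the quotient by $G$.

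\textbf{Step 2: the collapse map (item~2).} I restrict the limit of the flow to a fibre $|t|=\eta$. There the fibre is $T_B\times T_f$, with base coordinate $y=\log|x|/\log|t|\in K_{\mathbb R}/L$ and phase $\arg x\in T_f$. The limit map sends $(y,\theta)$ to the point of the torus orbit attached to the open face of $\mathcal T$ containing $y$, using the dual-cell decomposition, and takes the phase modulo the isotropy torus of that face. Interiors of dual two-cells correspond to vertices of $\mathcal T$, so the $T_f$ factor survives intact. Dual edges correspond to edges with direction $n$, and the phase is divided by $S_n$. Dual vertices correspond to triangles, and all of $T_f$ collapses. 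This is exactly the real rounding of a toric variety \citep[Theorems~3.7 and~5.1]{NakayamaOgus2010}. Lemma~\ref{lem:cusp-model-comparison} then transfers the homotopy from the model block to the actual period block.

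\textbf{Step 3: fundamental groups (item~3).} The space $W$ is a union of toric surfaces glued along the dual complex. Each $\dP$ is simply connected, and the dual complex of the central fibre is the torus $K_{\mathbb R}/L$. Hence $\pi_1(W)\cong L\cong\Gamma$ via $B_0^+$, generated by $\widehat\gamma,u$. Under $c$, the $T_f$ directions $w,d$ die because they collapse at dual vertices. The meridian $a_0$ dies because it bounds a disc transverse to a smooth point of $W$.

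\textbf{Step 4: solid tori (item~4).} For $c\in\{w,d\}\subset K$, I take a small disc $\{|t|\le\eta\}$ through a point of the dense orbit of a component $E_v$, together with the $S_c$-orbit of that point. The $S_c$-orbit closes up because $A_0$ fixes $K$, so the product of the disc with this orbit is an embedded $D^2\times S^1$. Its boundary is $a_0\times c$, which is $\operatorname{sw}_c(a_0)$. Embeddedness follows from the freeness of the $\Gamma$-action on the boundary orbits.

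I expect the main obstacle to be Step~1 in the non-saturated case: checking that Usui's shrinking map is equivariant for $\Gamma$ acting through the finite-index image $L\subsetneq K$, and that it extends continuously across the two vertex orbits. I would first verify this on the $\Gamma'$-quotient, where the verification mirrors \citep[Lemmas~7.5--7.10]{Alpoge2026}, and then descend by the free $G$-action.
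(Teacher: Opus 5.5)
\paragraph{The gap: Step~1 (item~1), the equivariance of the retraction.} Your reason for $\Gamma$-equivariance is that, with trivial twist, the shear is coordinatewise monomial, so the shrinking flow commutes with it. This does not apply to Usui's map. Usui's Theorem~5.2 concerns a proper semistable degeneration with simple normal crossings central fibre. The radial action it produces is built from auxiliary choices: standard-coordinate charts and branches, normal projections $\pi_I$, and cutoff families $\{p_U\}$ in the descending inductions of his Propositions~3.2 and~4.3. Nothing makes these commute with a deck group unless you arrange it.

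Moreover, neither $X_{\Sigma,\epsilon}$ nor $A$ satisfies his hypotheses. $X_{\Sigma,\epsilon}$ is not proper over the disc, since its general fibre is $(\CC^*)^2$. The central fibre of $A$ is not SNC, because each component meets itself along $C_{\mathrm{dbl}}$.

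The one flow that genuinely is coordinatewise monomial, $|z_i|\mapsto|z_i|^{\lambda}$ with phases fixed, does commute with the untwisted shear. But it moves points of $W$: take $z_0=0$, $|z_1|=\tfrac12$, $z_2=1$ in a triangle chart. Its limit as $\lambda\to\infty$ also jumps across $|z_i|=1$, so it is not a strong deformation retraction. Compactness of $\mathcal K_0$ does not give continuity at the endpoint either; that comes from Usui's (5.2.5.2) and (5.2.5.4). You flag this step as the main obstacle, but the proposal contains no argument that resolves it.

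\paragraph{The two missing ideas.} First, the $\Gamma'=2\Gamma$ cover is not an optional convenience; it is where Usui applies. Every difference of two vertices of a simplex of $\mathcal T$ is primitive in $K$, whereas $L'=2L\subset 2K$, so no simplex has two vertices identified modulo $L'$. The $\Gamma'$-quotient is therefore semistable, with distinct local branches in $t=z_1\cdots z_r$, and $G$ acts freely on every stratum because $\Gamma$ does.

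Second, equivariance must be built into Usui's data. Take a finite $G$-stable chart system with trivial stabilizers, transport coordinates and branches around $G$-orbits, and run the inductions on $G$-orbits of subsets of components. Replace every cutoff family by its average
$p_U^G(x)=|G|^{-1}\sum_{h\in G}p_{h^{-1}U}(h^{-1}x)$.
Usui's conditions are affine and permuted by $G$, so they survive averaging. Since $G$ permutes coordinates and $R$ is invariant under coordinate permutations, the homotopy $((1-\rho),1)\cdot x$ is strictly $G$-equivariant. It descends to $A=A'/G$ because $q\times\mathrm{id}_{[0,1]}$ is a quotient map.

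This works directly for the actual period block. Lemma~\ref{lem:cusp-model-comparison} is therefore not needed for item~1; the paper uses it only for item~2.

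\paragraph{Items 2--4.} Steps~2--4 follow the paper's route in outline; your $S_c$-orbit of the disc $E_0$ is exactly the paper's $V_w$ and $V_d$. Two corrections are needed.
\begin{enumerate}
\item In Step~2, the relevant face is the one whose open dual cell contains the base point, not the open face containing $y$; the latter would collapse $T_f$ over an open dense set. You also still need that the endpoint induces a map $h\colon T_B\to T_B$ preserving closed dual cells. Such an $h$ is homotopic to the identity by straight-line interpolation in lifted convex cells.
\item In Step~4, embeddedness requires injectivity, not freeness. Points of your set over $t\neq0$ all have $y=0$, so an identifying deck element satisfies $B_t\lambda=0$, hence $\lambda=0$. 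On the central stratum, $O(v)$ is carried to $O(v+B_0^+\lambda)$, which again forces $\lambda=0$.
\end{enumerate}
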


\begin{proof}
\smallskip
\noindent\emph{A semistable finite cover and equivariant shrinking.}
The finite cover allows the retraction to be constructed on the closed
neighbourhood while retaining the map induced by its boundary inclusion.
In the fan of cones over $\mathcal T$, pass
from $\Gamma$ to $\Gamma'=2\Gamma$.  Its translation image is
$L'=2L$, and the quotient deck group is $G$.  Every difference of two
vertices
of a simplex of $\mathcal T$ is primitive in $K$, whereas $L'\subset2K$.
Thus no edge or triangle identifies two of its vertices modulo $L'$.
The central fibre of the $\Gamma'$-quotient is consequently a reduced
simple normal crossings divisor, with local equation
$t=z_1\cdots z_r$, where $1\le r\le3$.
The total space is smooth, the map is proper and flat (locally, $t$ is a
non-zero-divisor in a regular local ring), and it is smooth over the
punctured disc.  The $G$-action is free even on the double and
triple strata: if $\bar\lambda\in G$ fixed $[x]$, then
$\Psi_\lambda x=\Psi_\mu x$ for some $\mu\in\Gamma'$; freeness of the
original $\Gamma$-action gives $\lambda=\mu$.

The finite cover is therefore a semistable degeneration of the kind used by
Usui: its total space is smooth, the map to the disc is proper and flat, the
central fibre is reduced with simple normal crossings, and the punctured
fibres are smooth.  We take the inverse image of a closed subdisc preserved
by the radial action.

We apply Usui's monoid action and shrinking map
\citep[Theorem~5.2 and equations (5.2.5.2), (5.2.5.4), and (5.2.12)]{Usui2001}.
We make the auxiliary choices in Usui's construction $G$-equivariant as
follows.  Index the
irreducible components upstairs by a finite set $J$, which $G$ permutes.
Choose a finite $G$-stable system of standard-coordinate charts, each with
trivial stabilizer, and transport coordinates and branches around every
$G$-orbit.  Run the descending inductions of Usui's
Propositions~3.2 and~4.3 simultaneously on the $G$-orbits of subsets of
$J$.  Whenever an indexed partition or cutoff family
$\{p_U\}$ occurs, replace it by
$p_U^G(x)=|G|^{-1}\sum_{h\in G}p_{h^{-1}U}(h^{-1}x)$.
Then $p_{gU}^G(gx)=p_U^G(x)$; subordination, closed-support conditions,
and the fibre-constancy conditions (3.2.22), (3.2.29), and (4.3.4) are
preserved because they are affine and are permuted by $G$.  Induction in
the defining formulas therefore gives $\pi_{gI}\circ g=g\circ\pi_I$ and
$z_{gi}(gy)=z_i(y)$
as systems of branches.  Thus the normal projections and global equations
are $G$-equivariant.

All central multiplicities are one.  Coordinate permutations preserve the
cube, its face projections, and hence Usui's action $R$ on hyperbolic
polar coordinates.  Write $z_i=r_iu_i$, and let $\xi$ be a point of the
logarithmic fibre over $y=\tau_X(\xi)$.  For the radial submonoid,
equation (5.2.12) reads
\begin{equation}\label{eq:usui-radial-phases}
 r_i((s,1)\cdot\xi)=R(s,y)_i,
 \qquad u_i((s,1)\cdot\xi)=u_i(\xi).
\end{equation}
For $(s,1)$, the phase coordinates in
\eqref{eq:usui-radial-phases} are unchanged.  Since $G$ permutes the chosen
coordinates, the radial action is strictly $G$-equivariant.  The homotopy
$H'(x,\rho)=((1-\rho),1)\cdot x$ stays in this closed neighbourhood by (5.2.5.2),
lands in the central fibre at $\rho=1$, and fixes that fibre pointwise by
(5.2.5.4) and \eqref{eq:usui-radial-phases}.  Let $q:A'\to A=A'/G$ be the
finite quotient of this neighbourhood.  Since $q\times\mathrm{id}_{[0,1]}$ is
a quotient map, there is a unique continuous homotopy
$H:A\times[0,1]\to A$ satisfying
$H(q(x),\rho)=q\bigl(H'(x,\rho)\bigr)$.
It fixes $W$ pointwise and has image in $W$ at $\rho=1$, so it is a
strong deformation retraction, proving the first assertion.

\smallskip
\noindent\emph{The collapse map on a nearby fibre.}
The polar-coordinate description gives the following model for the descended
central fibre.  For $b\in T_B$, let $P_b$ be the face of the
triangulation $\mathcal T$ whose open dual cell contains $b$.  If
$P_b$ has vertices $v_0,\ldots,v_r$, let $T_b\subset T_f$ be the
subtorus whose cocharacter lattice is generated by
$v_1-v_0,\ldots,v_r-v_0$.  Define
$(b,x)\sim(b',x')$ precisely when $b=b'$ and $x'-x\in T_b$.  Then
\begin{equation}\label{eq:cusp-collapse-quotient}
 W=(T_B\times T_f)/{\sim}.
\end{equation}
Thus $T_b$ is trivial when $P_b$ is a vertex, is $S_n$ when $P_b$
is an edge of primitive direction $n$, and is all of $T_f$ when
$P_b$ is a unimodular triangle.  These are the three cases in
Lemma~\ref{lem:cusp-retraction}.  The same polar description is the local
model of \citep[Proposition~3.1 and Theorems~3.7, 5.1]{NakayamaOgus2010}.

To identify this map, work first over a sufficiently small positive real
value of $t$.  In the chart belonging to
$P=[v_0,\ldots,v_r]$, write $t=z_0\cdots z_r$ and
$z_i=r_iu_i$.  Equation~\eqref{eq:usui-radial-phases} says that the
radial action changes only the radii.  At its endpoint, projection from the
logarithmic chart to the ordinary toric chart forgets precisely the phases
of the vanishing coordinates.  On the fixed-$t$ fibre their product phase
is fixed, so the endpoint fibre is
\[
 \ker\bigl((S^1)^{r+1}\longrightarrow S^1\bigr)\cong(S^1)^r,
 \qquad
 \mathbb Z\langle(v_i,1):0\leq i\leq r\rangle\cap(K\oplus0)
 =\mathbb Z\langle(v_i-v_0,0):1\leq i\leq r\rangle.
\]
The nonvanishing-coordinate phases are retained.  The description is
invariant under monomial coordinate changes; the local descriptions agree on
overlaps by \citep[p.~30, Claim~2 following (5.2.12)]{Usui2001}.  The radial endpoint
induces a map
$h:T_B\to T_B$ preserving every closed dual cell.  Indeed, a closed dual
cell is characterized in a toric chart by the set of coordinates allowed to
vanish.  Usui's radial formula changes only the corresponding radii and its
face projections commute on inclusions; it neither introduces a coordinate
 outside that face nor changes the formula on a common face.  Hence
$h(\overline P)\subset\overline P$ for every dual cell $P$, including
its lower-dimensional boundary.  In particular, $h$ fixes every dual
vertex.  Let $p:K_{\mathbb R}\to T_B=K_{\mathbb R}/L$ be the universal
covering.  Fix a lift $\widetilde v$ of a dual vertex, and let $\widetilde h$
be the lift of $h\circ p$ fixing it.

On the one-skeleton, $h$
maps each edge into itself relative to its endpoints, so $h_*$ is the
identity on $\pi_1(T_B)$.  It follows that
$\widetilde h(y+\ell)=\widetilde h(y)+\ell$ for
$y\in K_{\mathbb R}$ and $\ell\in L$.
Lifting successively across adjacent cells now shows that
$\widetilde h$ maps each closed lifted cell into itself.
For $0\leq u\leq1$, the explicit interpolation
$\widetilde h_u(y)=(1-u)\widetilde h(y)+uy$
is still $L$-equivariant and stays in the same lifted cell, since that cell
is convex.  This translation-equivariant interpolation is compatible on common faces;
because the isotropy subgroups above are constant on open faces and only
increase on their boundaries, it lifts to a homotopy of the quotient maps.
Thus the restriction of the shrinking map for the model with period block
$sB_0^+$ is homotopic to $c$.

Lemma~\ref{lem:cusp-model-comparison} now removes the holomorphic and constant
deck twists without changing the torus isotropy subgroups.  Hence the
restriction of the actual deformation retraction to $F$ is homotopic to
the collapse map $c$.

\smallskip
\noindent\emph{The fundamental group and the boundary inclusion.}
Projection of \eqref{eq:cusp-collapse-quotient} to $T_B$ has the section
$b\mapsto[b,1]$.  Every loop in $T_f$ can be moved to a dual vertex,
where all of $T_f$ is collapsed.  Cellular van Kampen therefore gives
$\pi_1(A)=\pi_1(T_B)=L$.
Under the identification in \eqref{eq:cusp-B-marking}, its generators
correspond to $\widehat\gamma,u$; the loops $w,d$ in $T_f$ are
null-homotopic.

For the meridian and the suspension tori, use the chart of the
triangle $[0,e_1,e_2]$, use $t=z_0z_1z_2$, $x_1=z_1$, and
$x_2=z_2$.
Here the coordinate meridian is counterclockwise, whereas $a_0$ is
clockwise.  The three coordinate discs are
\[
 \begin{split}
 E_0&=\{z_1=z_2=1,\ |z_0|\le\eta\},\\
 E_w&=\{z_0=\eta,\ z_2=1,\ |z_1|\le1\},\\
 E_d&=\{z_0=\eta,\ z_1=1,\ |z_2|\le1\}.
 \end{split}
\]
Their counterclockwise boundary classes in $H_1(M;\mathbb Z)$ are
\begin{center}
\begin{tabular}{c@{\qquad}c}
\toprule
disc & class in $H_1(M;\mathbb Z)$\\
\midrule
$E_0$ & $-a_0$\\
$E_w$ & $-a_0+w$\\
$E_d$ & $-a_0+d$\\
\bottomrule
\end{tabular}
\end{center}
These discs prove that $a_0,w,d$ die, and hence establish
\eqref{eq:cusp-pi1-map}.

\smallskip
\noindent\emph{The suspension tori.}
Set $V_w=\{z_2=1,\ |z_1|=1,\ |z_0|\le\eta\}$ and
$V_d=\{z_1=1,\ |z_2|=1,\ |z_0|\le\eta\}$.
Each is an embedded solid torus whose boundary lies in $M$.
For $V_w$, if $\phi=\arg z_0$ and $\psi=\arg z_1$, then on the
boundary $(\arg t,\arg x_1,\arg x_2)=(\phi+\psi,\psi,0)$.
The determinant-one change $(\alpha,\beta)=(\phi+\psi,\psi)$ gives
$[\partial V_w]=(-a_0)\wedge(-a_0+w)
=-a_0\wedge w=\pm[\xi_w]$.
The same calculation gives
$[\partial V_d]=-a_0\wedge d=\pm[\xi_d]$.
All displayed coordinate sets project injectively.  Two points of
$E_0,V_w,V_d$ that differ by a deck transformation have the same normalized
logarithmic coordinate $y$, so \eqref{eq:an-log-coordinate-translation} gives
$B_t\lambda=0$.  The matrix $B_t$ tends to $B_0^+$ and is invertible on the
chosen cusp disc, and hence
$\lambda=0$.  On $E_w$ and $E_d$, respectively, the identities
$t=\eta z_1$ and $t=\eta z_2$ first force equality of the two points;
freeness then forces the deck element to be trivial.  On the central
strata, $D_v^\circ$ is sent to $D_{v+B_0^+\lambda}^\circ$, and
injectivity of $B_0^+$ again gives $\lambda=0$.  Since the quotient is
locally biholomorphic, the compact injective images are embedded discs and
solid tori downstairs.
\end{proof}

The cokernel term in the cohomological Wang sequence is already determined by
\eqref{eq:cusp-monodromy-cohomology}:
$C_0:=\operatorname{coker}(T_0-I:V^+\to V^+)
=\mathbb Z[w^\vee]\oplus\mathbb Z[\Delta]
\oplus(\mathbb Z/2)[\gamma]$.
The solid-torus bounds have a cohomological consequence.  Pairing with
$\xi_w$ and $\xi_d$ defines two homomorphisms
$H^2(M;\mathbb Z)\to\mathbb Z$.  On the injected copy of $C_0$ in the
Wang sequence, these homomorphisms extract, with the chosen orientations,
the $[w^\vee]$- and $[\Delta]$-coefficients, respectively; they vanish on
every monodromy-invariant two-cocycle on the fibre.  Since both suspension
tori bound in $A$, the two evaluations vanish on every class restricted from
$H^2(A;\mathbb Z)$.  This formulation does not require a splitting of the
Wang sequence.

\subsection{Homology of the cusp fibre}

The calculation below follows the cellular calculation in
\citep[Appendix~A]{Alpoge2026}, using the quotient dual graph for the
index-two lattice.  The middle panel of
Figure~\ref{fig:cusp-combinatorics} records the dual graph and its translation
labels.
Modulo horizontal translation, the lower and upper triangles at height
$j\in\mathbb Z/2$ give four dual zero-cells
$\mathsf L_j,\mathsf U_j$.  Orient the six dual edges by
$d_j,\upsilon_j:\mathsf L_j\to\mathsf U_j$ and
$h_j:\mathsf L_j\to\mathsf U_{j-1}$.
In the edge order $(d_0,\upsilon_0,h_0,d_1,\upsilon_1,h_1)$, their
translation labels are
$(0,-e_1,-2e_2,0,-e_1,0)$.  There are two dual two-cells
$P_0,P_1$, with $\partial P_0=-d_0+\upsilon_0+d_1-\upsilon_1$ and
$\partial P_1=-\partial P_0$.
Thus the ordinary cellular differentials of the dual torus are
\begin{equation}\label{eq:dual-torus-differentials}
 \partial_1=
 \begin{pmatrix}
 -1&-1&-1& 0& 0& 0\\
  1& 1& 0& 0& 0& 1\\
  0& 0& 0&-1&-1&-1\\
  0& 0& 1& 1& 1& 0
 \end{pmatrix},
 \qquad
 \partial_2=
 \begin{pmatrix}
 -1& 1\\ 1&-1\\ 0&0\\ 1&-1\\-1&1\\0&0
 \end{pmatrix}.
\end{equation}
The nonzero Smith invariants of $\partial_1$ are $1,1,1$, and that of
$\partial_2$ is $1$.  Moreover $\operatorname{im}\partial_2$ is the
primitive rank-one subgroup generated by
$-d_0+\upsilon_0+d_1-\upsilon_1$ inside $\ker\partial_1$.  Thus the
bottom row has homology $(\mathbb Z,\mathbb Z^2,\mathbb Z)$; these Smith
invariants also prevent torsion from entering this row of the filtration
spectral sequence.

For a dual cell $\sigma$, write $T_\sigma=T_b$ for $b$ in its interior,
and filter $W$ by the inverse images, under the projection $W\to T_B$, of
the skeleta of the dual complex.  Over $\sigma$, this projection has fibre
$T_f/T_\sigma$, and the filtration spectral sequence has
$E^1_{p,q}=\bigoplus_{\sigma^p}H_q(T_f/T_\sigma;\mathbb Z)$.
\begin{samepage}
Its nonzero terms are
\begin{center}
\begin{tabular}{c@{\quad}ccc}
\toprule
 & $p=0$ & $p=1$ & $p=2$\\
\midrule
$q=2$ & 0 & 0 & $\mathbb Z^2$\\
$q=1$ & 0 & $\mathbb Z^6$ & $K\oplus K$\\
$q=0$ & $\mathbb Z^4$ & $\mathbb Z^6$ & $\mathbb Z^2$\\
\bottomrule
\end{tabular}
\end{center}
\end{samepage}
The bottom differential is \eqref{eq:dual-torus-differentials}.  Put
$n_d=e_2-e_1$, $n_v=e_2$, and $\nu_n(k)=\det(n,k)$, and write
$\nu_d=\nu_{n_d}$ and $\nu_v=\nu_{n_v}$.
The only differential in the $q=1$ row is
\begin{equation*}
 \begin{split}
 \delta:K\oplus K&\longrightarrow\mathbb Z^6,\\
 (k,k')&\longmapsto
 \bigl(-\nu_d(k-k'),\nu_v(k-k'),0,
       \nu_d(k-k'),-\nu_v(k-k'),0\bigr).
 \end{split}
\end{equation*}
The zero horizontal entries result from the two opposite incidences of the
same horizontal edge orbit.  The map
$K\to\mathbb Z^2$, $k\mapsto(\nu_d(k),\nu_v(k))$, is unimodular.
Therefore $\ker\delta=\{(k,k):k\in K\}$, and its cokernel is the free
abelian group
$\mathbb Z\langle\bar h_0,\bar h_1,\bar d,\bar\upsilon\rangle$, where
$\bar d=[d_0]=[d_1]$ and
$\bar\upsilon=[\upsilon_0]=[\upsilon_1]$.
Equivalently, the two nonzero Smith invariants of $\delta$ are both one.
Every possible higher differential has zero target.  Consequently there
are no torsion or additive extension ambiguities, and
\begin{equation}\label{eq:cusp-homology}
 H_k(A;\mathbb Z)=H_k(W;\mathbb Z)=
 (\mathbb Z,\mathbb Z^2,\mathbb Z^5,
   \mathbb Z^2,\mathbb Z^2),\qquad 0\le k\le4.
\end{equation}

In degree two, the two
base cycles are $\beta_1=d_0-\upsilon_0$ and
$\beta_2=d_0+d_1-h_0-h_1$; their translation labels are
$e_1$ and $2e_2$, respectively.  If
$\beta=\sum m_e e$ and $v\in K$, the collapse over an edge gives
$c_*(\beta\otimes v)=\sum_e m_e\nu_{n_e}(v)[C_e]$.
Here $C_e$ is the oriented surviving circle in $T_f/S_{n_e}$ over the
dual edge $e$.
With compatible orientations, the four mixed generators have images
\begin{equation}\label{eq:specialization-table}
\begin{array}{c|cc}
 &e_1&e_2\\ \hline
\beta_1&-\bar d+\bar\upsilon&-\bar d\\
\beta_2&-2\bar d&-2\bar d-\bar h_0-\bar h_1.
\end{array}
\end{equation}
The base orientation class maps to a primitive class $\omega_B$.  The phase
orientation class maps to zero because it may be represented over a dual
vertex, where all of $T_f$ is collapsed.  It follows directly from
\eqref{eq:specialization-table} that
\begin{equation}\label{eq:specialization-image}
 c_*H_2(F;\mathbb Z)
 =\mathbb Z\langle\omega_B,\bar d,\bar\upsilon,
                  \bar h_0+\bar h_1\rangle.
\end{equation}
This subgroup is saturated.  Its cokernel is infinite cyclic, primitively
generated by $\bar h_0$, with $\bar h_1=-\bar h_0$; in particular,
$\bar h_0-\bar h_1$ is twice, rather than once, a primitive generator.
With target basis
$(\omega_B,\bar h_0,\bar h_1,\bar d,\bar\upsilon)$ and source basis
$([T_B],[T_f],\beta_1e_1,\beta_1e_2,\beta_2e_1,\beta_2e_2)$, the
specialization matrix is
\begin{equation*}
 \begin{pmatrix}
 1&0& 0& 0& 0& 0\\
 0&0& 0& 0& 0&-1\\
 0&0& 0& 0& 0&-1\\
 0&0&-1&-1&-2&-2\\
 0&0& 1& 0& 0& 0
 \end{pmatrix}.
\end{equation*}
Its four nonzero Smith invariants are all one, so the image in
\eqref{eq:specialization-image} is saturated.

On cohomology put
\begin{equation*}
 I_0=(\textstyle\bigwedge^2V^+)^{T_0}
 =\mathbb Z\langle
 \gamma u^\vee,\gamma\Delta,u^\vee w^\vee,
 u^\vee\Delta-2\gamma w^\vee\rangle.
\end{equation*}
Via $A\simeq W$, restriction to a nearby fibre is the pullback $c^*$.
Since the relevant homology groups are free, its matrix is the transpose of
the specialization matrix above.  Its image is therefore saturated of rank
four, and its kernel is primitive of rank one.  The image lies in $I_0$ by
monodromy invariance; since both are saturated rank-four sublattices of
$\bigwedge^2V^+$ with the same rational span, they are equal.  Thus
\begin{equation}\label{eq:cusp-H2-splitting}
 0\longrightarrow\mathbb Z
 \longrightarrow H^2(A;\mathbb Z)
 \xrightarrow{\ \operatorname{res}_F\ } I_0\longrightarrow0.
\end{equation}
The sequence splits as groups, but no splitting will be used yet.  Meanwhile
the Wang sequence and \eqref{eq:cusp-monodromy-cohomology} give
\begin{equation}\label{eq:cusp-Wang}
 0\longrightarrow C_0\longrightarrow H^2(M;\mathbb Z)
 \longrightarrow I_0\longrightarrow0,
 \qquad
 C_0=\mathbb Z[w^\vee]\oplus\mathbb Z[\Delta]
       \oplus(\mathbb Z/2)[\gamma].
\end{equation}
Write $\vartheta=[\gamma]$ for the unique nonzero torsion element of $C_0$.

\begin{lemma}[The cusp double curve]
\label{lem:cusp-double-curve}
Let $D$ be either component of the cusp fibre, let
$\nu:S\to D$ be its normalization, and let
$C_{\mathrm{dbl}}\cong\mathbb P^1$ be its double curve.  Number the boundary
curves of $S\cong\dP$ as in Figure~\ref{fig:cusp-combinatorics}, so that
$C_1,C_4$ are the two conductor branches, and put
$C=C_2+C_3+C_5+C_6$.  Then
\begin{equation}\label{eq:normalization-normal-bundle}
 \nu^*(\mathcal O_Y(D)|_D)=\mathcal O_S(-C).
\end{equation}
Moreover, $D^3=0$, $D\cdot C_{\mathrm{dbl}}=-2$, and
$\deg N_{C_{\mathrm{dbl}}/Y}=-2$.
\end{lemma}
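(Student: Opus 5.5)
The plan is to work entirely on the toric model over the cusp disc. Since the cusp fibre is a reduced principal divisor there, $D_0+D_1=\operatorname{div}(t_c)$ near the cusp, so $\mathcal O_Y(D)|_D\cong\mathcal O_Y(-D')|_D$, where $D'$ is the other component.

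\textbf{Step 1: the pullback formula.} Pull back along $\nu$ using the toric description. Lift $D$ to $E_v\cong\dP$ in $X_\Sigma$. Upstairs the central fibre is $\sum_{v'}E_{v'}$, and
\[
 \mathcal O(E_v)|_{E_v}\cong\mathcal O\Bigl(-\sum_{v'\ne v}E_{v'}\Bigr)\Big|_{E_v}
 =\mathcal O_{E_v}\Bigl(-\sum_{i=1}^6 C_i\Bigr),
\]
since each neighbour $v'$ of $v$ meets $E_v$ in exactly one boundary curve. This is not yet the claimed formula, because upstairs the neighbours in the $\pm e_1$ directions are distinct divisors. Downstairs those two neighbours are identified with $D$ itself, so they contribute to $\mathcal O(D)$ rather than to $\mathcal O(-D')$. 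The correct computation is therefore $\nu^*(\mathcal O_Y(D)|_D)=\nu^*\mathcal O_Y(-D')|_D=\mathcal O_S(-C)$, where $C=C_2+C_3+C_5+C_6$ is the preimage of $D\cap D'$ (Figure~\ref{fig:cusp-combinatorics}). I need to check that the pullback of $D'$ to $S$ is reduced along each $C_i$. This follows from the local equations $z_0z_1=0$, $z_2=0$ established at the end of the toric section, since the components meet transversally.

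\textbf{Step 2: $D^3$.} Compute $D^3=(\nu^*D|_D)^2=C^2$ on $\dP$. On the hexagon of $(-1)$-curves, $C$ is a chain of two pairs of adjacent curves, $C_2+C_3$ and $C_5+C_6$, with the two pairs disjoint. Hence
\[
 C^2=\sum_i C_i^2+2\,(C_2C_3+C_5C_6)=-4+4=0.
\]

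\textbf{Step 3: $D\cdot C_{\mathrm{dbl}}$ and the normal bundle.} The map $\nu$ restricted to $C_1$ is an isomorphism onto $C_{\mathrm{dbl}}$, so
\[
 D\cdot C_{\mathrm{dbl}}=\deg\mathcal O_S(-C)|_{C_1}=-C\cdot C_1.
\]
$C_1$ meets $C_2$ and $C_6$ once each, giving $-2$. For the normal bundle, $C_{\mathrm{dbl}}$ is locally the transverse intersection of the two branches of $D$, as in the chart with equation $z_0z_1=0$. Hence $N_{C_{\mathrm{dbl}}/Y}\cong N_{C_1/S}\oplus N_{C_4/S}$, with each summand the normal bundle of the corresponding conductor branch (transported via $\phi$). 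Each summand has degree $C_i^2=-1$, so the total degree is $-2$.

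\textbf{Main obstacle.} The hard part is the bookkeeping of Step 1 and the normal bundle: confirming which hexagon curves are the conductor branches (the $\pm e_1$ edges), and that the branch normal directions are as described. I would verify this in the chart of triangle $[0,e_1,e_2]$.
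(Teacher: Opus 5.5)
Your proposal is correct. For the pullback formula, $D^3=C^2=0$, and $D\cdot C_{\mathrm{dbl}}=-C\cdot C_1=-2$ it is the paper's argument. Your upstairs detour is consistent with it: $\nu^*\mathcal O_Y(D)$ also sees the translates $E_{v\pm e_1}$, which add $C_1+C_4$ back to $-\sum_i C_i$.

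The only genuine difference is the normal-bundle degree. The paper does not split $N_{C_{\mathrm{dbl}}/Y}$. It notes that, with the branches given locally by $z_1z_2=0$, the product of their normal lines is both $\det N_{C_{\mathrm{dbl}}/Y}$ and $\mathcal O_Y(D)|_{C_{\mathrm{dbl}}}$. So $\deg N_{C_{\mathrm{dbl}}/Y}=D\cdot C_{\mathrm{dbl}}=-2$ is inherited from the previous step at no cost. Because the product is symmetric in the branches, that argument also does not depend on how they are labelled globally.

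Your splitting $N_{C_{\mathrm{dbl}}/Y}\cong N_{C_1/S}\oplus\phi^*N_{C_4/S}$ gives the finer conclusion $N_{C_{\mathrm{dbl}}/Y}\cong\mathcal O(-1)\oplus\mathcal O(-1)$. It does, however, rely on two facts you should state explicitly:
\begin{enumerate}
\item The two branches are globally distinguished along $C_{\mathrm{dbl}}$. This holds because $\nu^{-1}(C_{\mathrm{dbl}})=C_1\sqcup C_4$, and the $\pm e_1$ curves are opposite, hence disjoint, sides of the hexagon.
\item $\nu$ maps a small neighbourhood of each $C_i$ isomorphically onto the corresponding branch germ $D^{(i)}$. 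Hence $N_{C_{\mathrm{dbl}}/D^{(i)}}\cong N_{C_i/S}$ has degree $C_i^2=-1$.
\end{enumerate}
The two routes agree, since transversality gives $\mathcal O_Y(D)|_{C_{\mathrm{dbl}}}\cong N_{C_{\mathrm{dbl}}/D^{(1)}}\otimes N_{C_{\mathrm{dbl}}/D^{(4)}}$.
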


\begin{proof}
Let $D'$ be the other cusp component.  Since $Y$ is smooth, both
components are Cartier.  On a holomorphic neighbourhood of the cusp fibre,
$D+D'=\operatorname{div}(t)$.  The pullback of $D'|_D$ to the
normalization is the reduced divisor $C$, which proves
\eqref{eq:normalization-normal-bundle}.  Each $C_i$ is a $(-1)$-curve,
and the only adjacent pairs among the four components of $C$ are
$(C_2,C_3)$ and $(C_5,C_6)$.  Hence
$C^2=-4+2(1+1)=0$.  The projection formula for the finite normalization
therefore gives
$D^3=\int_S c_1(\mathcal O_S(-C))^2=C^2=0$.
Either conductor branch, say $C_1$, meets $C$ in two reduced points and
maps isomorphically to $C_{\mathrm{dbl}}$.  Restricting
\eqref{eq:normalization-normal-bundle} to $C_1$ gives
$D\cdot C_{\mathrm{dbl}}=-2$.  Locally along the double curve the two
branches of $D$ have product equation $z_1z_2=0$.  The product of their
normal lines is both $\det N_{C_{\mathrm{dbl}}/Y}$ and
$\mathcal O_Y(D)|_{C_{\mathrm{dbl}}}$, proving the last equality.
\end{proof}

\begin{proposition}[The class of a cusp component]
\label{prop:cusp-component-class}
There is a topological line bundle $\mathcal E$ on the closed cusp neighbourhood
whose first Chern class $\kappa=c_1(\mathcal E)$ generates the kernel in
\eqref{eq:cusp-H2-splitting} and satisfies
\begin{equation}\label{eq:cusp-component-class}
 [D_0]=2\kappa,\qquad [D_1]=-2\kappa.
\end{equation}
Its restriction to a smooth fibre has zero Chern class, while on the boundary
mapping torus
\begin{equation}\label{eq:kappa-boundary}
 \kappa|_M=\vartheta=[\gamma].
\end{equation}
Thus $\mathcal E$ is topologically trivial on every smooth fibre, although
its restriction to $M$ is not.
\end{proposition}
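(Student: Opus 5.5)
The approach is to build $\kappa$ as the class of a ``half'' of $D_0$ on the cusp neighbourhood, and then read off its boundary restriction by evaluating on explicit cycles in $M$. Since $A\simeq W$ by Lemma~\ref{lem:cusp-retraction}, we have $H^2(A;\ZZ)\cong H^2(W;\ZZ)$, and by \eqref{eq:cusp-homology} this group is free of rank five. The classes $[D_0],[D_1]$ restrict to zero on a smooth fibre, because a nearby fibre is disjoint from the cusp fibre. Hence both lie in the rank-one kernel of $\operatorname{res}_F$ in \eqref{eq:cusp-H2-splitting}. Moreover $D_0+D_1=\operatorname{div}(t)$ is principal on $A$, so $[D_1]=-[D_0]$.

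It remains to show that $[D_0]$ is exactly twice a generator of that kernel. I would detect this by pairing with $H_2(A;\ZZ)\cong H_2(W;\ZZ)$. By \eqref{eq:specialization-image} and the discussion after it, the cokernel of $c_*$ is $\ZZ$, generated by $\bar h_0$ with $\bar h_1=-\bar h_0$. A class in the kernel of $c^*=\operatorname{res}_F$ is determined by its value on $\bar h_0$, and the kernel generator pairs to $\pm1$ with it.

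I then compute $[D_0]\cdot\bar h_0$. The cell $\bar h_0$ is the circle over the horizontal dual edge $h_0$ joining $\mathsf L_0$ to $\mathsf U_1$, which collapses to a $\PP^1$ in the double locus $D_0\cap D_1$. This $\PP^1$ is one of the curves $C_2,C_3,C_5,C_6$ on the normalization of either component. By \eqref{eq:normalization-normal-bundle} applied to $D_0$, the degree of $\mathcal O_Y(D_0)$ on it is $-C\cdot C_i$. Each $C_i$ is a $(-1)$-curve meeting exactly one other summand of $C$, so $C\cdot C_i=-1+1=0$. That gives zero, which would be wrong for a nonzero class, so the pairing needs more care. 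I would instead identify the relevant cycle directly: $\bar h_0$ should correspond to the sphere formed from the circle over $h_0$, and the edge joins opposite orbits, so $D_0\cdot\bar h_0$ should come from one-sided intersections. I would fix this by computing $D_0$ against the two spheres $\bar h_0$, $\bar h_1$ via the local chart $t=z_0z_1z_2$ and the translation labels $(0,-2e_2)$. The aim is to get $\pm2$ with opposite signs, consistent with $\bar h_1=-\bar h_0$.

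Granting the value $\pm2$, I define $\kappa$ as the generator with $[D_0]=2\kappa$. The bundle $\mathcal E$ exists because $H^2(A;\ZZ)$ classifies topological line bundles. Restriction to a smooth fibre vanishes by construction.

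For \eqref{eq:kappa-boundary}, first note that $2\kappa|_M=[D_0]|_M=0$, since $D_0$ misses $M$. So $\kappa|_M$ is $2$-torsion in $H^2(M;\ZZ)$. By \eqref{eq:cusp-Wang}, the torsion of $H^2(M)$ is $\{0,\vartheta\}$: the quotient $I_0$ is free, and $C_0$ has torsion exactly $\ZZ/2$.

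It therefore remains to show $\kappa|_M\ne0$, and this is the main obstacle. Suppose $\kappa|_M=0$. Then $\mathcal E$ is trivial on $M$, and I would derive a contradiction with the long exact sequence of $(A,M)$ via Lefschetz duality: $H^2(A,M)\cong H_4(A)\cong\ZZ^2$, and the map $H^2(A,M)\to H^2(A)$ has image generated by the Poincaré duals of $D_0,D_1$. Its image meets the kernel line only in multiples of $[D_0]=2\kappa$. Exactness at $H^2(A)$ then forces $\kappa|_M\neq0$.

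Alternatively, one can verify $\kappa|_M=\vartheta$ concretely. On a loop $a_0\times\widehat\gamma$-type torus in $M$, the transition $T_0\Delta=\Delta+2\gamma$ produces a square root of the trivialization given by $t$ on $D_0$. This exhibits $\mathcal E|_M$ as the flat bundle with holonomy $-1$ detecting $[\gamma]\in C_0$.
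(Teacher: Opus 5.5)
\textbf{Gap.} The step that everything else rests on, namely that $[D_0]$ is exactly $\pm2$ times the generator of $\ker(\operatorname{res}_F)$, is not proved. You misidentify the cycle $\bar h_0$, obtain $0$, and then simply grant $\pm2$.

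\emph{The misidentification.} The dual edge $h_0\colon\mathsf L_0\to\mathsf U_1$ crosses a \emph{horizontal} edge of $\mathcal T$, of direction $e_1=\widehat w$. The subscripts of $\mathsf L_0,\mathsf U_1$ are heights of triangles, not vertex orbits. The edge these two triangles share has both endpoints at even height. So the sphere over $h_0$ is the toric curve $E_v\cap E_{v+e_1}$. That curve is the double curve $C_{\mathrm{dbl}}$ of $D_0$, the common image of the conductor branches $C_1,C_4$. It is not one of $C_2,C_3,C_5,C_6$.

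\emph{Why you got zero.} The four curves $C_2,C_3,C_5,C_6$ lie over the diagonal and vertical dual edges. Their classes $\bar d,\bar\upsilon$ lie in $c_*H_2(F;\ZZ)$ by \eqref{eq:specialization-image}. That is why $[D_0]$ vanishes on them, and why they cannot detect the kernel at all.

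\emph{The fix.} With the correct identification, Lemma~\ref{lem:cusp-double-curve} gives directly
$\langle[D_0],\bar h_0\rangle=\pm D_0\cdot C_{\mathrm{dbl}}=\mp2$: restrict $\mathcal O_S(-C)$ to $C_1$, which meets $C_2$ and $C_6$. Over $h_1$ one gets $D_0\cdot C_{\mathrm{dbl}}(D_1)=-D_1\cdot C_{\mathrm{dbl}}(D_1)=2$, which is the opposite sign you anticipated. Since the kernel generator pairs to $\pm1$ with $\bar h_0$, this yields $[D_0]=\pm2\kappa$. This is the same numerical input the paper uses for primitivity, $\kappa(C_{\mathrm{dbl}})=-1$. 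Until it is supplied, your boundary argument also has no footing, because it needs $\ker(H^2(A)\to H^2(M))=\ZZ\cdot2\kappa$.

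\textbf{Comparison once repaired.} Your argument is then a genuinely different and shorter route.

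The paper constructs $\mathcal E$ explicitly as the descent of $\mathcal O(E)$, with $E=\sum-\lceil j/2\rceil D_{(a,j)}$ and multipliers $t^{r_\lambda}$. It gets $[D_0]=2\kappa$ from $2E=D_{\mathrm{ev}}-\operatorname{div}(\chi^m)$. It reads off $\kappa|_M=[\gamma]$ from the boundary multiplier $e^{2\pi i\theta\gamma(\lambda)}$ in the Wang sequence. This produces an actual holomorphic bundle and a cocycle-level identification of the boundary class.

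You instead take $\kappa$ abstractly as the kernel generator; classification of line bundles suffices for the topological statement. You then get the boundary class from the pair $(A,M)$:
\begin{itemize}
\item $2\kappa|_M=[D_0]|_M=0$;
\item $\operatorname{Tor}H^2(M;\ZZ)=\{0,\vartheta\}$, because $I_0$ is free;
\item $\ker(H^2(A)\to H^2(M))$ is the image of $H^2(A,M)\cong H_4(A)\cong H_4(W)$.
\end{itemize}
That argument is correct. You should state explicitly the two facts it uses. First, $H_4(W;\ZZ)$ is free on the fundamental classes of the irreducible components $D_0,D_1$. Second, the duality map sends the fundamental class of a divisor to $c_1$ of its line bundle, so the image is $\ZZ[D_0]$.

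Drop the ``concrete'' alternative. It is not an argument as written: $\widehat\gamma$ is not $A_0$-invariant ($A_0\widehat\gamma=\widehat\gamma-2d$), so there is no suspension torus $a_0\times\widehat\gamma$. Moreover, a class of order two is invisible to integral evaluation on any torus.
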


\begin{proof}
Let $D_{(a,j)}$ denote the height-one ray divisor on the infinite toric
cover and set
\[
 E=\sum_{(a,j)\in\mathbb Z^2}
   -\left\lceil\frac j2\right\rceil D_{(a,j)}.
\]
The fan is locally finite, and only the finitely many ray divisors belonging
to a fixed cone meet its affine chart.  Hence this sum is a locally finite
torus-invariant Weil divisor.  The toric cover is smooth, so its local rings
are factorial and $E$ is Cartier on every chart.  The ratios of the local
equations on overlaps are units and therefore define $\mathcal O(E)$.

Let $D_{\mathrm{ev}}$ be the sum over even $j$, and denote by $D_0$
and $D_1$ the components obtained from the divisors with even and odd
values of $j$, respectively.  For the character
$m=(0,1,1)$, evaluation on every height-one ray gives
\begin{equation}\label{eq:half-divisor-identity}
 2E=D_{\mathrm{ev}}-\operatorname{div}(\chi^m).
\end{equation}
Now write $\lambda=r_\lambda\widehat\gamma+s_\lambda u$.  Its height-one translation
is $(s_\lambda,-2r_\lambda)$, and the torus-translation factor fixes every
invariant ray divisor.  Coefficientwise comparison therefore gives
$\Psi_\lambda^*E=E+r_\lambda\operatorname{div}(t)$.
Multiplication by $t^{r_\lambda}$ gives the regular isomorphism
$\Psi_\lambda^*\mathcal O(E)
=\mathcal O(E+r_\lambda\operatorname{div}(t))
\longrightarrow\mathcal O(E)$.
This remains regular when $r_\lambda<0$: the pole of the rational
multiplier is exactly cancelled by the change of the divisor sheaf in the
source.  Since $r_{\lambda+\mu}=r_\lambda+r_\mu$ and
$\Psi_\lambda^*t=t$, these~isomorphisms satisfy the strict cocycle
condition.  They therefore descend $\mathcal O(E)$ to a line bundle
$\mathcal E$ on the cusp neighbourhood.

Pulling the character $\chi^m$ through the holomorphic torus translation
introduces a nowhere-vanishing holomorphic unit $u_\lambda(t)$.  The deck
law makes $\{u_\lambda\}$ a multiplicative cocycle.  On a sufficiently
small disc, choose holomorphic logarithms on the two generators of
$\Gamma$ and extend them additively.  The cocycles
$\exp(\sigma\log u_\lambda)$, $0\leq\sigma\leq1$, give a homotopy from
this unit cocycle to the trivial one.  Thus it contributes no integral first
Chern class, and the descended form of
\eqref{eq:half-divisor-identity} yields $[D_0]=2\kappa$.  Since
$D_0+D_1=\operatorname{div}(t)$ on the cusp neighbourhood, the second equality in
\eqref{eq:cusp-component-class} follows.  Restriction to a nearby fibre
leaves only constant character multipliers, so $c_1(\mathcal E|_F)=0$.

On $t=\eta e^{2\pi i\theta}$, the positive real factor
$\eta^{r_\lambda}$ is homotopically trivial and the descent multiplier is
$e^{2\pi i\theta r_\lambda}=e^{2\pi i\theta\gamma(\lambda)}$.
In the Wang sequence \eqref{eq:cusp-Wang}, this cocycle represents
$[\gamma]\in C_0$, and hence proves
\eqref{eq:kappa-boundary}.

Finally, let $C_{\mathrm{dbl}}$ be the double curve obtained by identifying
the two horizontal boundary curves in the normalization of $D_0$.
Lemma~\ref{lem:cusp-double-curve} gives
$D_0\cdot C_{\mathrm{dbl}}=-2$.  Therefore
\eqref{eq:cusp-component-class} gives
$\kappa(C_{\mathrm{dbl}})=-1$.  Thus $\kappa$ is primitive.  Since its
restriction to $F$ vanishes, it generates the kernel in
\eqref{eq:cusp-H2-splitting}.
\end{proof}

\subsection{The two multiple fibres}

The image of $Y_{\mathrm{fin}}$ in the base is a closed disc containing
the orbifold points $p_1$ and $p_2$.  Split it into two closed discs
$U_1,U_2$ meeting in a band, with $p_j$ in
$U_j$, and put $Y_j=f^{-1}(U_j)$.  On the orbifold cover of
$U_j$, the homotopy
$R_\rho(s_j,x)=((1-\rho)s_j,x)$, for $0\leq\rho\leq1$,
commutes with
$(s_j,x)\mapsto(\zeta_js_j,A_jx+v_j/m_j)$, where
$\zeta_j=e^{-2\pi i/m_j}$.  It therefore descends to a
strong deformation retraction of $Y_j$ onto its reduced central fibre
$S_j$.  Hence
$Y_j\simeq S_j$.  After trivializing the torus bundle over the overlap
band, $Y_1\cap Y_2\simeq F\times[0,1]\searrow F$.
This retraction is compatible with the two retractions $Y_j\searrow S_j$,
and the induced attaching maps are the coverings
$\pi_j:F\to S_j$.  Thus $Y_{\mathrm{fin}}$ has the homotopy-pushout
model $S_1\cup_F S_2$.  Define
$\eta_1=4u^\vee+2w^\vee+3\Delta$,
$\eta_2=u^\vee+w^\vee+\Delta$, and
$Q=u^\vee w^\vee+3\gamma\Delta$.  Here $T_j$ denotes the
contragredient action fixed at the beginning of this section.
The affine Bieberbach group of the reduced fibre is
\[
 \mathcal G_j=
 \left\langle\Lambda^+,r_j\ \middle|\
 r_j\lambda r_j^{-1}=A_j\lambda,\quad r_j^{m_j}=v_j
 \right\rangle,
 \qquad (m_1,m_2)=(3,4),
\]
and fits into
$1\to\Lambda^+\to\mathcal G_j\to C_{m_j}\to1$.
The spectral-sequence calculation below follows the method of
\citep[Appendix~A, Lemma~A.4]{Alpoge2026}, with $\Lambda^+$ in place of
$\Lambda$; in particular, the extension associated with $S_2$ changes.
For the free affine action on $F$, the Cartan--Leray spectral sequence,
equivalently the Lyndon--Hochschild--Serre sequence of this extension, is
\begin{equation*}
 E_2^{p,q}=H^p\!\left(C_{m_j};\textstyle\bigwedge^qV^+\right)
 \Longrightarrow H^{p+q}(S_j;\mathbb Z).
\end{equation*}
The edge map is the covering pullback $\pi_j^*$.  In degree one, an
invariant character $\chi$ descends exactly when
$\chi(v_j)\equiv0\pmod{m_j}$.  Hence
$\pi_1^*H^1(S_1;\mathbb Z)=
\mathbb Z\langle3\gamma,\eta_1\rangle$ and
$\pi_2^*H^1(S_2;\mathbb Z)=
\mathbb Z\langle4\gamma,\eta_2\rangle$.

We determine the images of $\pi_j^*$ on $H^2$ without using a
transgression formula.  Transfer identifies $H^2(S_j;\mathbb Q)$ with
$(\bigwedge^2V^+_{\mathbb Q})^{T_j}$.  Direct calculation from the matrices
$A_j$ gives the integral invariant lattices
$I_1=\mathbb Z\langle \gamma\eta_1,Q\rangle$ and
$I_2=\mathbb Z\langle \gamma\eta_2,Q\rangle$.
With the orientation
$\gamma u^\vee w^\vee\Delta$, their Gram matrices in the displayed bases
are
\[
 \begin{pmatrix}0&3\\3&6\end{pmatrix},
 \qquad
 \begin{pmatrix}0&1\\1&6\end{pmatrix},
\]
of determinants $-9$ and $-1$, respectively.  Put
$L_j=\pi_j^*H^2(S_j;\mathbb Z)_{\mathrm{free}}\subset I_j$.
The projection formula gives
$\langle\pi_j^*x\smile\pi_j^*y,[F]\rangle
=m_j\langle x\smile y,[S_j]\rangle$.
The intersection form on $H^2(S_j;\mathbb Z)_{\mathrm{free}}$ is
unimodular, so $|\det L_j|=m_j^2$.  It follows that
$[I_1:L_1]=1$ and $[I_2:L_2]=4$, and in particular
$L_1=\mathbb Z\langle \gamma\eta_1,Q\rangle$.

To identify the index-four sublattice $L_2$, put
$k_1=u-d$ and $k_2=w-d$.  At $p_2$, the monodromy and translation
vector satisfy $A_2k_1=k_2$, $A_2k_2=-k_1$, $A_2d=d$, and
$v_2=-\widehat\gamma-3k_1+3k_2$.
Eliminating $\widehat\gamma$ with $r_2^4=v_2$ gives
\begin{equation}\label{eq:order-four-product-model}
 \pi_1(S_2)=
 \bigl(\mathbb Z^2\langle k_1,k_2\rangle
       \rtimes_R\mathbb Z\langle r_2\rangle\bigr)
 \times\mathbb Z\langle d\rangle,
 \qquad R(k_1)=k_2,\quad R(k_2)=-k_1.
\end{equation}
Both $S_2$ and $N_R\times S^1_d$ are aspherical: the former is a free
finite affine quotient of $F$, while the latter has universal cover
$\mathbb R^4$.  The displayed fundamental-group identification therefore
yields $S_2\simeq N_R\times S^1_d$, where $N_R$ is the mapping torus of
the quarter-turn $R$ on the two-torus.  We write an element of the displayed
semidirect product in the normal form $(v,n;m)$, meaning
$v r_2^n d^m$, with $v\in\mathbb Z^2\langle k_1,k_2\rangle$.
In these coordinates
$\widehat\gamma=(-3k_1+3k_2,-4;0)$.
The Wang and K\"unneth sequences give two generators for the free part of
$H^2(N_R\times S^1_d;\mathbb Z)$: the fibre-orientation class on $N_R$
and the product of the mapping-torus base class with the $d$-circle class.
The first pulls back to
$(u^\vee-3\gamma)(w^\vee+3\gamma)=Q-3\gamma\eta_2$.  For the second, the
normal-form expression for $\widehat\gamma$ shows that the base class pulls
back to $-4\gamma$, while the $d$-circle class pulls back to $\eta_2$.
Thus, up to signs, the two pullback classes are
$Q-3\gamma\eta_2$ and $4\gamma\eta_2$.
Consequently
$L_2=\mathbb Z\langle4\gamma\eta_2,Q-3\gamma\eta_2\rangle
=\mathbb Z\langle4\gamma\eta_2,\gamma\eta_2+Q\rangle$, and the pullback lattices are
\begin{equation*}
\begin{array}{c|c|c}
 &\pi_j^*H^1(S_j;\mathbb Z)
 &\pi_j^*H^2(S_j;\mathbb Z)_{\mathrm{free}}\\ \hline
j=1&\mathbb Z\langle3\gamma,\eta_1\rangle
   &L_1=\mathbb Z\langle \gamma\eta_1,Q\rangle\\
j=2&\mathbb Z\langle4\gamma,\eta_2\rangle
   &L_2=\mathbb Z\langle4\gamma\eta_2,\gamma\eta_2+Q\rangle.
\end{array}
\end{equation*}
Comparing coefficients gives
\begin{equation}\label{eq:finite-fibre-lattice-arithmetic}
 L_1\cap L_2=4\mathbb ZQ,
 \qquad
 L_1+L_2=\mathbb Z\langle\gamma\eta_1,\gamma\eta_2,Q\rangle.
\end{equation}
The latter sum is saturated: on the coordinates
$(\gamma w^\vee,\gamma\Delta,u^\vee w^\vee)$, the three displayed generators have a
minor of determinant $-1$.

We first construct the torsion class on $S_2$.  The homological
coinvariants are
$\Lambda^+/(A_2-I)\Lambda^+=\mathbb Z[\widehat\gamma]\oplus\mathbb Z[u]
\oplus(\mathbb Z/2)[d-u]$.  Since the affine relation in these coinvariants
is $4r_2=v_2=-\widehat\gamma$, eliminating $\widehat\gamma$ gives
$H_1(S_2;\mathbb Z)=\mathbb Z[r_2]\oplus\mathbb Z[u]
\oplus(\mathbb Z/2)[d-u]$.  Thus the universal coefficient theorem gives a
unique nonzero torsion class
$\alpha_2\in H^2(S_2;\mathbb Z)$, with
$\operatorname{Tor}H^2(S_2;\mathbb Z)\cong\mathbb Z/2$.

To determine its boundary image, consider the normal line of $S_2$.  The
affine generator $r_2$ acts on this line by the unitary
character $r_2\mapsto e^{-2\pi i/4}$, which is trivial on $\Lambda^+$.
This character has the real lift
$\widetilde\chi(\lambda)=\gamma(\lambda)$ and
$\widetilde\chi(r_2)=-1/4$,
because $\gamma\circ A_2=\gamma$ and
$4\widetilde\chi(r_2)=-1=\gamma(v_2)$.  Thus the normal line is
topologically trivial.  In the exponential sequence
$0\to\mathbb Z\to\mathbb R\to U(1)\to0$, the real character
$\widetilde\chi$ lifts the unitary normal character, so its Bockstein,
which is the first Chern class of the normal line, vanishes.  Let
$p:M_2=\partial Y_2\to S_2$ be its unit-circle bundle.  Its Euler class is
therefore zero, and the Gysin sequence gives
$\ker(p^*:H^q(S_2;\mathbb Z)\to H^q(M_2;\mathbb Z))
=\operatorname{im}({-}\smile c_1(p))=0$.
Thus $p^*$ is injective in every degree.

The relations in the cokernel term of the cohomological Wang sequence are
$6\gamma-u^\vee+w^\vee=0$, $-u^\vee-w^\vee=0$, and
$-6\gamma+2u^\vee=0$.  Consequently,
$\operatorname{coker}(T_2-I|V^+)
=\mathbb Z[\gamma]\oplus\mathbb Z[\Delta]
\oplus(\mathbb Z/2)[u^\vee-3\gamma]$.
In this local cokernel $[\gamma]$ has infinite order, whereas the class
denoted $[\gamma]$ in $C_0$ has order two.  Since $p^*$ is injective,
$p^*\alpha_2$ is represented here by $[u^\vee-3\gamma]$.  This determines
the restriction to $M_2$; the restriction to the common boundary $M$ is
determined below, after gluing, by naturality of the universal-coefficient
$\operatorname{Ext}$-term.

Combining the presentations of the two affine fundamental groups in the
homotopy pushout, the $A_1$-coinvariant relations reduce to
$u=2w$ and $2d=3w$, whereas the $A_2$-coinvariant relations reduce to
$u=w$ and $2(d-u)=0$.  Together they give $u=w=0$ and $2d=0$.
The affine relations then become
$3r_1=\widehat\gamma$ and $4r_2=-\widehat\gamma$.  Hence
\begin{equation}\label{eq:H1-B-presentation}
 \begin{split}
 H_1(Y_{\mathrm{fin}};\mathbb Z)
 &=\frac{\mathbb Z\langle\widehat\gamma,d,r_1,r_2\rangle}
 {\mathbb Z\langle2d,\,3r_1-\widehat\gamma,\,
 4r_2+\widehat\gamma\rangle}\\
 &\cong\mathbb Z\langle\mathfrak s\rangle
   \oplus(\mathbb Z/2)\langle d\rangle,
 \end{split}
\end{equation}
where $\widehat\gamma=12\mathfrak s$, $r_1=4\mathfrak s$, and
$r_2=-3\mathfrak s$.  The cohomology
Mayer--Vietoris segment is
\[
 \begin{split}
 H^1(S_1)\oplus H^1(S_2)&\longrightarrow H^1(F)
 \longrightarrow H^2(Y_{\mathrm{fin}})\\
 &\longrightarrow H^2(S_1)\oplus H^2(S_2)
 \longrightarrow H^2(F).
 \end{split}
\]
The cokernel of its first arrow is free cyclic.  Indeed,
$3\gamma$ and $4\gamma$ generate $\gamma$, while
imposing the relations $\eta_1-3\eta_2=u^\vee-w^\vee$ and $\eta_2=0$ gives
$u^\vee=w^\vee$, $\Delta=-2w^\vee$.  Thus this cokernel is generated primitively by
$[w^\vee]$.  By \eqref{eq:finite-fibre-lattice-arithmetic}, the kernel of
the last arrow on the free summands is $\mathbb Z(4Q)$.  In addition, the
class $\alpha_2$ of order two from $S_2$ lies in the full kernel and
restricts trivially
to the torsion-free group $H^2(F)$.  The universal coefficient theorem and
\eqref{eq:H1-B-presentation} give an injection
$\operatorname{Ext}(H_1(Y_{\mathrm{fin}}),\mathbb Z)\cong\mathbb Z/2$
into $H^2(Y_{\mathrm{fin}};\mathbb Z)$; naturality for
$S_2\hookrightarrow Y_{\mathrm{fin}}$ is expressed by
\[
 \begin{array}{ccc}
 \operatorname{Ext}(H_1(Y_{\mathrm{fin}}),\mathbb Z)
   &\lhook\joinrel\longrightarrow&H^2(Y_{\mathrm{fin}};\mathbb Z)\\
 \Big\downarrow&&\Big\downarrow\\
 \operatorname{Ext}(H_1(S_2),\mathbb Z)
   &\lhook\joinrel\longrightarrow&H^2(S_2;\mathbb Z).
 \end{array}
\]
The left arrow is an isomorphism because
$[d-u]\mapsto[d]$ is an isomorphism on torsion.  Hence the nonzero torsion
class restricts to $\alpha_2$.  In particular,
$H^2(Y_{\mathrm{fin}};\mathbb Z)$ contains an element of order two mapping to
$\alpha_2$.  Hence the extension of the $\mathbb Z/2$-summand by the
connecting $\mathbb Z$-summand splits, and
\begin{equation}\label{eq:H2-B}
 H^2(Y_{\mathrm{fin}};\mathbb Z)=
 \mathbb Z b\oplus\mathbb Z Q_{\mathrm{fin}}
 \oplus(\mathbb Z/2)\alpha.
\end{equation}
Here $b$ is the connecting class represented by $w^\vee$ in
$V^+/(\pi_1^*H^1(S_1)+\pi_2^*H^1(S_2))\cong\mathbb Z[w^\vee]$, the class
$Q_{\mathrm{fin}}$ restricts to $4Q$ on $F$, and $\alpha$ is the
unique nonzero torsion class.  Its existence and uniqueness also follow directly
from \eqref{eq:H1-B-presentation} and universal coefficients.

The summand of order two in \eqref{eq:H2-B} is the
$\operatorname{Ext}$-term in the universal coefficient theorem arising
from $\operatorname{Tor}H_1(Y_{\mathrm{fin}};\mathbb Z)$.
The next lemma concerns the homology group $H_2(Y_{\mathrm{fin}};\mathbb Z)$.

\begin{lemma}[Pushforward under the fourfold cover]
\label{lem:fourfold-pushforward}
For the fourfold covering $\pi_2:F\to S_2$, one has
$H_2(S_2;\mathbb Z)=\mathbb Z^2\oplus
(\mathbb Z/2)\langle z\rangle$ and $\pi_{2*}(u\wedge d)=z$.
Consequently $H_2(Y_{\mathrm{fin}};\mathbb Z)$, and hence
$H^3(Y_{\mathrm{fin}};\mathbb Z)$, is torsion free.
\end{lemma}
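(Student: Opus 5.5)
The plan is to compute $H_2(S_2;\mathbb Z)$ from the product model \eqref{eq:order-four-product-model}, then identify $\pi_{2*}(u\wedge d)$ through group homology of aspherical spaces, and finally feed both into the homology Mayer--Vietoris sequence of the homotopy pushout $Y_{\mathrm{fin}}\simeq S_1\cup_F S_2$.

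\emph{Computing $H_2(S_2)$.} Since $S_2\simeq N_R\times S^1_d$, I would compute $H_*(N_R)$ from the homological Wang sequence of the quarter-turn $R$ on $T^2=\mathbb R^2/\mathbb Z\langle k_1,k_2\rangle$.
\begin{itemize}
\item $R$ acts trivially on $H_2(T^2)$, and $R-I$ on $H_1(T^2)$ has matrix $\begin{pmatrix}-1&-1\\1&-1\end{pmatrix}$, of determinant $2$.
\item Hence $\ker(R-I)=0$ on $H_1(T^2)$ and $\operatorname{coker}(R-I)\cong\mathbb Z/2$, generated by $[k_1]=[k_2]$.
\item This gives $H_1(N_R)=\mathbb Z[r_2]\oplus(\mathbb Z/2)[k_1]$ and $H_2(N_R)=\mathbb Z[T^2]$.
\end{itemize}
By K\"unneth,
\[
H_2(S_2)=H_2(N_R)\oplus H_1(N_R)\otimes H_1(S^1_d)=\mathbb Z^2\oplus(\mathbb Z/2)\langle z\rangle,
\qquad z=[k_1]\times[d].
\]
There is no $\operatorname{Tor}$ correction, because $H_*(S^1)$ is free.

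\emph{Identifying $\pi_{2*}(u\wedge d)$.} In $\Lambda^+$ we have $u=k_1+d$, so $u\wedge d=k_1\wedge d$ in $H_2(F)=\bigwedge^2\Lambda^+$. The class $k_1\wedge d$ is the fundamental class of the subtorus of $F$ spanned by the commuting loops $k_1$ and $d$. Under $\pi_2$ this subtorus maps to $S_2$ by the map of aspherical spaces induced by the inclusion $\mathbb Z\langle k_1\rangle\times\mathbb Z\langle d\rangle\to\pi_1(N_R)\times\mathbb Z\langle d\rangle$. Such a map is homotopic to the product of the loop $k_1$ in $N_R$ with the circle $S^1_d$. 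Its pushforward on the fundamental class is therefore the cross product $[k_1]\times[d]=z$.

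\emph{Torsion-freeness of $H_2(Y_{\mathrm{fin}})$.} I would use the segment
\[
H_2(F)\to H_2(S_1)\oplus H_2(S_2)\to H_2(Y_{\mathrm{fin}})\to H_1(F).
\]
The image in $H_1(F)$ is a subgroup of a free group, so any torsion of $H_2(Y_{\mathrm{fin}})$ comes from torsion in $\operatorname{coker}\bigl(H_2(F)\to H_2(S_1)\oplus H_2(S_2)\bigr)$. I would handle this cokernel in three steps.
\begin{enumerate}
\item Show $H_2(S_1)$ is torsion free. By universal coefficients, $\operatorname{Tor}H_2(S_1)\cong\operatorname{Tor}H^3(S_1)$. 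I would check this group vanishes using Poincar\'e duality, $\operatorname{Tor}H_2(S_1)\cong\operatorname{Tor}H^2(S_1)\cong\operatorname{Tor}H_1(S_1)$, together with the $A_1$-coinvariants. Since $A_1$ has order $3$, the only possible torsion is $3$-torsion, and the relations $u=2w$, $2d=3w$ should show $H_1(S_1)$ is free.
\item Kill the torsion generator. The class $(\pi_{1*}(u\wedge d),z)$ lies in the image. Using its component in the free part together with the free-part images of other fibre classes, I would show that $(0,z)$ itself lies in the image, so the torsion summand dies in the cokernel.
\item Show the free-part image is saturated. This is the dual statement to saturation of $L_1+L_2$ in \eqref{eq:finite-fibre-lattice-arithmetic}, via the Kronecker pairing between $\pi_{j*}$ on $H_2/\mathrm{tors}$ and $\pi_j^*$ on $H^2_{\mathrm{free}}$.
\end{enumerate}
Finally, $\operatorname{Tor}H^3(Y_{\mathrm{fin}})\cong\operatorname{Tor}H_2(Y_{\mathrm{fin}})=0$ by universal coefficients, which gives the torsion-freeness of $H^3(Y_{\mathrm{fin}};\mathbb Z)$.

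\emph{Main obstacle.} I expect step 2 to be the hardest: showing that the torsion class and the free parts interact so that the cokernel is torsion free, rather than merely having saturated free part. The first thing I would try is to pick explicit fibre classes, namely $u\wedge d$ and $\gamma$-dual classes such as $\widehat\gamma\wedge d$. I would compute their images in $H_2(S_1)_{\mathrm{free}}\oplus H_2(S_2)_{\mathrm{free}}$ by pairing against the bases of $L_1,L_2$, then solve for an integral combination whose free part vanishes while its $z$-component is nonzero.
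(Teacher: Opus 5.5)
Your proposal is correct and follows the paper's proof essentially step for step. The shared steps are: Wang plus K\"unneth for $H_2(S_2)$; the identity $u\wedge d=k_1\wedge d\mapsto[\bar k_1]\times[d]$, which you justify by the product-map homotopy into the aspherical $N_R\times S^1_d$ where the paper uses an explicit cellular chain map; torsion-freeness of $H_2(S_1)$ via Poincar\'e duality and $H_1(S_1)\cong\mathbb Z^2$, which also needs the affine relation $3r_1=\widehat\gamma$ with $\widehat\gamma$ primitive in the coinvariants; and saturation of the free image via the transpose, whose image is $L_1+L_2$.

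The step you flag as the main obstacle is immediate, as in the paper. The class $u\wedge d$ pairs to zero with $\gamma\eta_1,\gamma\eta_2,Q$, so $\pi_{1*}(u\wedge d)=0$ because $H_2(S_1)$ is torsion free. Meanwhile $\pi_{2*}(u\wedge d)=z$ is already pure torsion. Hence $(\pi_{1*},-\pi_{2*})(u\wedge d)=(0,z)$, and no further combination of fibre classes is needed.
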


\begin{proof}
Retain the notation and product model of
\eqref{eq:order-four-product-model}.
Write $\bar k_1$ for the class of $k_1$ in the coinvariants
$\mathbb Z^2/(R-I)\mathbb Z^2$.  The Wang and K\"unneth sequences
therefore give
$H_2(S_2;\mathbb Z)=\mathbb Z^2\oplus
(\mathbb Z/2)\langle z\rangle$, where
$z=[\bar k_1]\times[d]$.
For comparison, the abelianization of $\pi_1(S_1)$ has
$u=2w$, $2d=3w$, and $\widehat\gamma=3r_1$.  Coprimality of $2$ and $3$ gives
$H_1(S_1;\mathbb Z)\cong\mathbb Z^2$, and Poincar\'e duality gives
torsion-free $H_2(S_1;\mathbb Z)$.
For the fourfold covering, the integral pushforward is
\begin{equation}\label{eq:order-two-pushforward}
 \pi_{2*}(u\wedge d)
 =\pi_{2*}(k_1\wedge d)=z.
\end{equation}
Here $\pi_{2*}$ is induced by the quotient covering, not the transfer, so
no factor of four occurs.  Indeed, under the product model above it sends the
fibre loop $k_1$ to its coinvariant $\bar k_1$ and leaves the $d$-circle
unchanged.  Choose the product cell structure in which $e_{k_1}$ and
$e_d$ are the corresponding oriented one-cells.  The $C_4$-orbit of
$e_{k_1}\times e_d$ consists of the four distinct cells with first
directions $k_1,k_2,-k_1,-k_2$, and hence has trivial stabilizer.  The
quotient characteristic map is
\[
 \begin{array}{ccc}
 D^1\times D^1&
 \xrightarrow{\ \chi_{k_1}\times\chi_d\ }&F\\
 \big\Vert&&\big\downarrow{\pi_2}\\
 D^1\times D^1&
 \xrightarrow{\ \chi_{\bar k_1}\times\chi_d\ }&S_2 .
 \end{array}
\]
On cell interiors $\pi_2$ has degree $+1$ with these orientations, and
the boundary gives the coinvariant relation.  Thus the cellular chain map
sends $k_1\times d$ to $[\bar k_1]\times[d]$, proving
\eqref{eq:order-two-pushforward}.
On the other hand, $u\wedge d$ pairs to zero with each of
$\gamma\eta_1,\gamma\eta_2,Q$, so it lies in the kernel of the free part of
$\Phi_2=(\pi_{1*},-\pi_{2*}):H_2(F)\to H_2(S_1)\oplus H_2(S_2)$
and maps onto all target torsion.  The transpose of the free map has image
$L_1+L_2$, which is saturated by
\eqref{eq:finite-fibre-lattice-arithmetic}; hence the free cokernel is
free.  Let $T=\operatorname{Tor}H_2(S_2)$, and let
$\Phi_{2,\mathrm{free}}$ be the composite of $\Phi_2$ with the
maximal free quotient of its target.  The restriction of $\Phi_2$ to
$\ker\Phi_{2,\mathrm{free}}$ lands in $T$; denote it by $t$.  The
exact sequence
\[
 0\longrightarrow T/t(\ker\Phi_{2,\mathrm{free}})
 \longrightarrow\operatorname{coker}\Phi_2
 \longrightarrow\operatorname{coker}\Phi_{2,\mathrm{free}}
 \longrightarrow0
\]
shows, using \eqref{eq:order-two-pushforward}, that
$\operatorname{coker}\Phi_2$ is free.  Homological
Mayer--Vietoris for $Y_1\cup_F Y_2$ then makes
$H_2(Y_{\mathrm{fin}};\mathbb Z)$ free.  The universal coefficient theorem gives
\begin{equation}\label{eq:H3-B-torsion-free}
 H^3(Y_{\mathrm{fin}};\mathbb Z)\ \text{is torsion free}.
\end{equation}
This proves the lemma.\qedhere
\end{proof}

\medskip
\begin{lemma}[Circle actions and the slant product]\label{lem:sweeping-slant}
Let a circle act continuously on a space $Z$, with action map
$m:S^1\times Z\to Z$.  For a loop $a:S^1\to Z$, denote by
$\operatorname{sw}(a)$ the homology class
$m_*([S^1]\times[a])$.  If $\omega\in H^2(Z;\mathbb Z)$, define
$\omega/[S^1]=m^*\omega/[S^1]\in H^1(Z;\mathbb Z)$.  Then
$\langle\omega,\operatorname{sw}(a)\rangle
=\langle\omega/[S^1],[a]\rangle$.
\end{lemma}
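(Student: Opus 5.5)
The identity is a formal consequence of naturality of the Kronecker pairing and the defining property of the slant product. We unwind both sides to pairings on $S^1\times S^1$ and compare.

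Let $\iota=m\circ(\mathrm{id}_{S^1}\times a):S^1\times S^1\to Z$, so that $\operatorname{sw}(a)=\iota_*([S^1]\times[S^1])$, where the second factor carries the fundamental class of the loop parameter.

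\emph{Left side.} Naturality of the pairing gives
\[
\langle\omega,\operatorname{sw}(a)\rangle=\langle\iota^*\omega,[S^1]\times[S^1]\rangle .
\]

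\emph{Right side.} We use the naturality of the slant product in the second variable. For $f:Z'\to Z$ and $c\in H^2(S^1\times Z)$,
\[
(\mathrm{id}\times f)^*c/[S^1]=f^*(c/[S^1]).
\]
Applying this with $f=a$ and $c=m^*\omega$ gives
\[
\langle\omega/[S^1],[a]\rangle=\langle a^*(m^*\omega/[S^1]),[S^1]\rangle=\langle\iota^*\omega/[S^1],[S^1]\rangle .
\]

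\emph{Comparison.} It remains to use the defining adjunction for the slant product with $\tau\in H^2(S^1\times S^1)$:
\[
\langle\tau/[S^1],y\rangle=\langle\tau,[S^1]\times y\rangle .
\]
This holds on the chain level via the Eilenberg--Zilber map. The only possible sign is $(-1)^{\deg}$, depending on which factor is slanted, and with the convention that the slant removes the first factor while $\operatorname{sw}$ is oriented with $[S^1]$ first, the sign is $+1$. Applying the adjunction with $y=[S^1]$, the loop class, yields the equality.

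\emph{Main obstacle.} The only genuine issue is the sign convention for the slant product relative to the cross product. To settle it, check on $S^1\times S^1$ with $\omega$ the generator $\theta_1\theta_2$: then $\omega/[S^1]=\theta_2$, and both sides equal $1$. Since both sides are linear in $\omega$ and $H^2(S^1\times S^1)\cong\mathbb Z$, this one check fixes the convention.
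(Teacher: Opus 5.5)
Your proposal is correct and is essentially the paper's argument, which also just combines naturality of the Kronecker pairing, $\langle\omega,m_*([S^1]\times[a])\rangle=\langle m^*\omega,[S^1]\times[a]\rangle$, with the defining adjunction $\langle c/[S^1],y\rangle=\langle c,[S^1]\times y\rangle$. The paper applies that adjunction directly on $S^1\times Z$ with $y=[a]$, so your pullback along $\mathrm{id}\times a$ to $S^1\times S^1$ and the extra naturality of the slant product are an unnecessary but harmless detour. Its only benefit is that the sign-convention check reduces to a single generator of $H^2(S^1\times S^1;\mathbb Z)$.
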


\begin{proof}
Naturality of the Kronecker pairing and the defining adjunction for slant
product give
\[
 \begin{aligned}
 \bigl\langle\omega,\,
   m_*([S^1]\times[a])\bigr\rangle
 &=\bigl\langle m^*\omega,\,[S^1]\times[a]\bigr\rangle\\
 &=\bigl\langle m^*\omega/[S^1],\,[a]\bigr\rangle .
 \end{aligned}\qedhere
\]
\end{proof}

To compute the restriction to the cusp boundary, put
$\mathbf i=(i_1,i_2,i_3,i_4)
=\bigl(\gamma u^\vee,\gamma\Delta,
u^\vee\Delta-2\gamma w^\vee,Q\bigr)$.
This is a basis of $I_0$, since replacing $u^\vee w^\vee$ by
$Q=u^\vee w^\vee+3\gamma\Delta$ is a unimodular change of basis.
Choose lifts
$\widetilde{\mathbf i}=(\widetilde i_1,\ldots,\widetilde i_4)$ under
\eqref{eq:cusp-H2-splitting}.  Their restrictions to $M$ project to
$i_1,\ldots,i_4$ in the Wang sequence \eqref{eq:cusp-Wang}.  The two
solid-torus bounds in Lemma~\ref{lem:cusp-retraction} show that these
restrictions have zero $[w^\vee]$- and $[\Delta]$-coordinates.  After
adding $\kappa$ when necessary, they also have zero
$\vartheta$-component.  Write
$\bar i_k=\widetilde i_k|_M$, set
$\widetilde Q=\widetilde i_4$.  Thus
$(\bar i_1,\ldots,\bar i_4,[w^\vee],[\Delta])$ is a basis of
$H^2(M;\mathbb Z)/\operatorname{Tor}$.

The class $b$ may be replaced by $\pm b+\epsilon\alpha$, and
$Q_{\mathrm{fin}}$ by $Q_{\mathrm{fin}}+n b+\epsilon'\alpha$, where
$n\in\mathbb Z$ and $\epsilon,\epsilon'\in\{0,1\}$; these changes do not
alter their images in $H^2(F;\mathbb Z)$.  We now verify that they can be
chosen with boundary restrictions
\begin{equation}\label{eq:B-boundary-restrictions}
 b|_M=[w^\vee],\qquad
 \alpha|_M=\vartheta,\qquad
 Q_{\mathrm{fin}}|_M=4\widetilde Q|_M-[\Delta].
\end{equation}
Since $S_1,S_2$, and $F$ are aspherical and the attaching maps are
coverings, the standard tree-of-spaces universal cover of the homotopy
pushout is contractible.  Thus $Y_{\mathrm{fin}}$ is a $K(\pi,1)$.  We
use the standard description of classes in
$H^2(Y_{\mathrm{fin}};\mathbb Z)$ by central $\mathbb Z$-extensions.  To
find the free $C_0$-coordinates of $b|_M$,
we evaluate the corresponding central commutator on the $A_0$-invariant
fibre loops $w$ and $d$.  Under this description, the connecting class
$b=\delta[w^\vee]$ is obtained by gluing the trivial central extensions of
the two affine groups.  Let $\mathbf z$
denote the central generator, and let $\widetilde\lambda_j$ and
$\widetilde r_j$ be the standard lifts in the $j$-th extension.  Choose
the connecting-map convention
$\widetilde\lambda_1
=\mathbf z^{\,w^\vee(\lambda)}\widetilde\lambda_2$.
The lifts satisfy
$\widetilde r_j\widetilde\lambda_j\widetilde r_j^{-1}
=\widetilde{A_j\lambda}_j$.  Set
$\widetilde a_0=\widetilde r_2^{-1}\widetilde r_1^{-1}$; this lifts
$a_0=r_2^{-1}r_1^{-1}$.  Direct substitution gives
\[
 \widetilde a_0\widetilde\lambda_1\widetilde a_0^{-1}
 =\mathbf z^{\,w^\vee(A_1^{-1}\lambda)-w^\vee(A_0\lambda)}
   \widetilde{A_0\lambda}_1.
\]
Indeed, after conjugation by $\widetilde r_1^{-1}$, changing from the first
extension to the second contributes $w^\vee(A_1^{-1}\lambda)$;
conjugation by $\widetilde r_2^{-1}$ changes the translation to
$A_0\lambda$, and changing back contributes
$-w^\vee(A_0\lambda)$.  For an $A_0$-invariant $\lambda$, the central
commutator is therefore
$w^\vee(A_1^{-1}\lambda)-w^\vee(\lambda)$; reversing the convention changes
only its overall sign.  Since $A_1^{-1}w=-u+2d$ and $A_1^{-1}d=d$, its
image in the free part of $C_0$ is $\pm[w^\vee]$, with no
$[\Delta]$-component.  Changing the sign of $b$ and adding $\alpha$
gives the first formula in \eqref{eq:B-boundary-restrictions}.  The torsion
generator $d$ maps
isomorphically from $H_1(M)$ to the torsion summand of
$H_1(Y_{\mathrm{fin}})$, so
naturality of the universal-coefficient $\operatorname{Ext}$-term gives
the second formula.

Because $A_1d=A_2d=d$, the affine twists commute with translation by the
$d$-circle, which therefore acts globally
on $Y_{\mathrm{fin}}$.  We have oriented
$\operatorname{sw}_d(a_0)$ by $a_0\wedge d$.  In applying
Lemma~\ref{lem:sweeping-slant}, orient the acting circle by $-d$, so that
$(-d)\wedge a_0=a_0\wedge d$, and write slant product as
${-}/[-d]$.  Contraction on a smooth fibre then gives
$(4Q)/[-d]=12\gamma$.  Choose the generator
$\Theta\in H^1(Y_{\mathrm{fin}};\mathbb Z)$ normalized by
$\Theta(\mathfrak s)=1$.  Then $\Theta|_F=12\gamma$; injectivity of
$H^1(Y_{\mathrm{fin}})\to H^1(F)$ therefore gives
$Q_{\mathrm{fin}}/[-d]=\Theta$.  With the chosen orientation, the suspension torus
is the image of the product of the acting circle with the loop $a_0$, so
Lemma~\ref{lem:sweeping-slant} gives
$\langle Q_{\mathrm{fin}}|_M,\operatorname{sw}_{d}(a_0)\rangle
=\langle Q_{\mathrm{fin}}/[-d],a_0\rangle=\Theta(a_0)$.
Now $a_0=r_2^{-1}r_1^{-1}=-\mathfrak s$ in
$H_1(Y_{\mathrm{fin}})$, so $\Theta(a_0)=-1$.  The corresponding
suspension torus bounds in $A$, and hence pairs trivially with
$4\widetilde Q|_M$.  Consequently,
$\langle Q_{\mathrm{fin}}|_M-4\widetilde Q|_M,
\operatorname{sw}_{d}(a_0)\rangle=-1$.
This is the coefficient of $[\Delta]$ in
$Q_{\mathrm{fin}}|_M-4\widetilde Q|_M$, with the stated sign.

For the Mayer--Vietoris difference map we use the convention
$\operatorname{res}_A-\operatorname{res}_{Y_{\mathrm{fin}}}$.
In the domain basis
$(\widetilde{\mathbf i},\kappa,b,Q_{\mathrm{fin}})$ and the target basis
$(\bar i_1,\bar i_2,\bar i_3,\bar i_4,[w^\vee],[\Delta])$ of the free
parts, its matrix is
\begin{equation}\label{eq:boundary-free-matrix}
 \begin{pmatrix}
 1&0&0&0&0& 0& 0\\
 0&1&0&0&0& 0& 0\\
 0&0&1&0&0& 0& 0\\
 0&0&0&1&0& 0&-4\\
 0&0&0&0&0&-1& 0\\
 0&0&0&0&0& 0& 1
 \end{pmatrix}.
\end{equation}
Its six nonzero Smith invariants are all one.  The remaining torsion row is
the map
\begin{equation}\label{eq:boundary-torsion-row}
 \mathbb Z\langle\kappa\rangle\oplus
 (\mathbb Z/2)\langle\alpha\rangle
 \longrightarrow(\mathbb Z/2)\langle\vartheta\rangle,
 \qquad (n,\epsilon)\longmapsto n-\epsilon\pmod2.
\end{equation}
Equations \eqref{eq:boundary-free-matrix} and
\eqref{eq:boundary-torsion-row} are the free and torsion parts of the same
integral map.

\subsection{The Mayer--Vietoris map in cohomological degree two}

The cusp neighbourhood and the neighbourhood of the multiplicity-four fibre
contribute classes whose restrictions to the common boundary have order two.
Equations~\eqref{eq:kappa-boundary}
and \eqref{eq:B-boundary-restrictions} show that their restrictions are equal.

\begin{proof}[Proof of Proposition~\ref{prop:degree-two-mv}]
The classes $\kappa$ and $\alpha$ have the same nonzero restriction,
$\kappa|_M=\vartheta=\alpha|_M$,
by \eqref{eq:kappa-boundary} and
\eqref{eq:B-boundary-restrictions}.  To determine the full kernel, use
\eqref{eq:cusp-H2-splitting}, \eqref{eq:cusp-Wang}, and
\eqref{eq:B-boundary-restrictions}.  The $I_0$-part first cancels the
$4Q$-part of a multiple of $Q_{\mathrm{fin}}$.  Its remaining coordinate in
$C_0$ is the same multiple of the primitive class $-[\Delta]$, so
that multiple is zero.  The primitive $[w^\vee]$-coordinate then excludes the
coefficient of the connecting generator $b$.  The remaining pairs are
$(n\kappa,\epsilon\alpha)$, where $n\in\mathbb Z$,
$\epsilon\in\{0,1\}$, and $n\equiv\epsilon\pmod2$.
They form the infinite cyclic group generated by
\eqref{eq:x-generator}.  Since $2\alpha=0$,
\eqref{eq:cusp-component-class} becomes
$[D_0]=(2\kappa,0)=2x$ and $[D_1]=-2x$.
\end{proof}

\subsection{Van Kampen and Mayer--Vietoris}

\begin{proof}[Proof of Proposition~\ref{prop:integral-topology}]
Let $r_1,r_2$ be the affine generators.  Lemma~\ref{lem:cusp-retraction}
and van Kampen give
\[
 \begin{split}
 \pi_1(Y)=\langle\Lambda^+,r_1,r_2\mid{}&
 r_j\lambda r_j^{-1}=A_j\lambda,
 \quad r_1^3=v_1,\quad r_2^4=v_2,\\
 &w=d=1,\quad r_1r_2=1\rangle.
 \end{split}
\]
The normal closure of $w,d$ also kills $u$, for
$A_2w=-u+2d$.  If $c$ is the image of $\widehat\gamma$, and
$x_1=r_1,x_2=r_2$, the remaining relations are
$x_1^3=c$, $x_2^4=c^{-1}$, and $x_1x_2=1$.
Thus $c=x_1^3=x_1^4$, and every generator is trivial.  Hence
$\pi_1(Y)=1$.

The map on first homology retains the integral torsion.  From
\eqref{eq:cusp-monodromy-homology},
$H_1(M)=\mathbb Z\langle\widehat\gamma,u,a_0\rangle
\oplus(\mathbb Z/2)\langle d\rangle$.
The map to the two pieces is
\begin{equation*}
 \begin{array}{c|cccc}
  &\widehat\gamma&u&a_0&d\\ \hline
 H_1(A)&\widehat\gamma&u&0&0\\
 H_1(Y_{\mathrm{fin}})&12\mathfrak s&0&-\mathfrak s&d.
 \end{array}
\end{equation*}
Its determinant on the free summands is $-1$, and it is the identity on
the summand of order two.  It is therefore an isomorphism.

Cohomological Mayer--Vietoris consequently identifies $H^2(Y;\mathbb Z)$
with the kernel in
Proposition~\ref{prop:degree-two-mv}.  Hence
$H^2(Y;\mathbb Z)=\mathbb Zx$ and
$[D_0]=2x$, $[D_1]=-2x$.

The same boundary formulas show that
$H^2(A)\oplus H^2(Y_{\mathrm{fin}})\to H^2(M)$ is surjective.  Its
composition with $H^2(M)\to I_0$ is surjective, and its image also contains
$\vartheta$ and $[w^\vee]$.  The difference of
$4\widetilde Q$ and $Q_{\mathrm{fin}}$ gives the primitive class
$[\Delta]$.  Mayer--Vietoris therefore injects
\begin{equation}\label{eq:H3-Y-injection}
 H^3(Y;\mathbb Z)\hookrightarrow
 H^3(A;\mathbb Z)\oplus H^3(Y_{\mathrm{fin}};\mathbb Z).
\end{equation}
The target is torsion free by \eqref{eq:cusp-homology} and
\eqref{eq:H3-B-torsion-free}.

For the Euler characteristic, use $e_c$, which is
additive for this locally closed stratification of the base and
multiplicative for the locally trivial part of the fibration.  Since $Y$
is compact, $e(Y)=e_c(Y)$.  Each reduced multiple fibre is a free finite
quotient of a real four-torus, so its Euler characteristic is zero; the
scheme-theoretic multiplicity does not change its underlying topological
space.  Over the remaining punctured-base stratum the fibre is a real
four-torus, and hence
$e_c(f^{-1}(B^\circ))=e_c(B^\circ)e(T^4)=0$.
The cusp fibre is the only contribution, and
\eqref{eq:cusp-homology} gives $e(W)=e(A)=4$.  Therefore $e(Y)=4$.
Simple connectivity, $H^2(Y)=\mathbb Z$, Poincar\'e duality, and this
Euler number give $b_3(Y)=0$.  The injection
\eqref{eq:H3-Y-injection} then gives $H^3(Y;\mathbb Z)=0$, including
torsion.  The injection
$\operatorname{Ext}(H_2(Y;\mathbb Z),\mathbb Z)\hookrightarrow
H^3(Y;\mathbb Z)=0$ shows that $H_2(Y;\mathbb Z)$ is torsion free,
and integral Poincar\'e duality supplies all remaining groups.
\end{proof}

\section{Characteristic classes}
\label{sec:characteristic-classes}

Let $D$ be either component of the cusp fibre, and choose the sign of
$x\in H^2(Y;\mathbb Z)$ so that $[D]=2x$.  The normalization of $D$
and its conductor determine the cubic form and the characteristic classes.

\begin{proof}[Proof of Proposition~\ref{prop:characteristic-classes}]
Let $\nu:S\to D$, $C_1,\ldots,C_6$, and
$C_{\mathrm{dbl}}$ be as in Lemma~\ref{lem:cusp-double-curve}.  That lemma
gives $D^3=0$, $D\cdot C_{\mathrm{dbl}}=-2$, and
$\deg N_{C_{\mathrm{dbl}}/Y}=-2$.
Thus $8x^3=0$; the group $H^6(Y;\mathbb Z)\cong\mathbb Z$ is
torsion free, so $x^3=0$.  Moreover,
$[D]=2x$ gives $x(C_{\mathrm{dbl}})=-1$, and hence
$C_{\mathrm{dbl}}$ is a primitive generator of
$H_2(Y;\mathbb Z)$.  Adjunction on this smooth rational curve gives
$\langle c_1(Y),C_{\mathrm{dbl}}\rangle
=\deg T\mathbb P^1+\deg N_{C_{\mathrm{dbl}}/Y}=2-2=0$.
It follows that
\begin{equation}\label{eq:c1-zero}
 c_1(Y)=0,\qquad w_2(Y)=0.
\end{equation}
Here $c_1(Y)=0$ is an equality of integral topological Chern classes; it
does not assert a holomorphic trivialization of $K_Y$.

Let $\phi:C_1\to C_4$ be the isomorphism that identifies the two branches.
The normalization sequence for $D$ is
\begin{equation}\label{eq:normalization-sequence}
 0\longrightarrow\mathcal O_D
 \longrightarrow\nu_*\mathcal O_S
 \xrightarrow{\operatorname{res}_{C_1}
  -\phi^*\operatorname{res}_{C_4}}
 \mathcal O_{C_{\mathrm{dbl}}}\longrightarrow0.
\end{equation}
At a triple point the local algebra of $D$ and its normalization is
\[
 \frac{\mathbb C\{z_1,z_2,z_3\}}{(z_1z_2)}
 \longrightarrow
 \mathbb C\{z_1,z_3\}\oplus\mathbb C\{z_2,z_3\}.
\]
The cokernel is $\mathbb C\{z_3\}$, via the difference of the two
restrictions at $z_1=z_2=0$.  Hence the difference map in
\eqref{eq:normalization-sequence} remains surjective at the triple point and
has no additional zero-dimensional cokernel.  Since $S$ is
rational and $C_{\mathrm{dbl}}\cong\mathbb P^1$, equation
\eqref{eq:normalization-sequence} gives $\chi(\mathcal O_D)=1-1=0$.
On the smooth threefold $Y$, apply the Hirzebruch--Riemann--Roch theorem
to the $K$-theory identity supplied by
$0\to\mathcal O_Y(-D)\to\mathcal O_Y\to\mathcal O_D\to0$.
Thus $[\mathcal O_D]=1-[\mathcal O_Y(-D)]$ and
$\operatorname{ch}(\mathcal O_D)=1-e^{-D}$.  Using
\eqref{eq:c1-zero}, the degree-six part of Hirzebruch--Riemann--Roch is
$\chi(\mathcal O_D)
=\int_Y(D^3/6+D\,c_2(Y)/12)$.
Since $D^3=\chi(\mathcal O_D)=0$, the Riemann--Roch formula gives
$D\,c_2(Y)=0$.  Choose $y\in H^4(Y;\mathbb Z)$ so that
$\langle x\smile y,[Y]\rangle=1$, and write
$c_2(Y)=ay$.  Since $D=2x$, we obtain $2a=0$, and hence
$c_2(Y)=0$ and $p_1(Y)=c_1(Y)^2-2c_2(Y)=0$.
\end{proof}

%% file: sections/period-torsor-appendix.tex
\section{A proof of the period-function lemma}
\label{app:period-torsor}

The existence of the functions $\tau,\mu,\beta$ with the required
transformation laws is proved in \citep[Theorem~3.4]{Alpoge2026}.
Lemma~\ref{lem:an-period-functions} records the form needed here.  We give a
condensed, self-contained proof in the notation of
Section~\ref{sec:analytic-construction}, following the four stages of the
argument in \citep[Sections~3.1--3.4]{Alpoge2026}.

\begin{proof}[Proof of Lemma~\ref{lem:an-period-functions}]
\smallskip
\noindent\emph{The modular lift and its cusp normalization.}
We use the standard facts about the modular group and the modular invariant
collected in
\citep[Chapter~VII, Sections~1.1--1.2 and~3.1--3.3]{Serre1973}.  Let $j$ have its usual
normalization $j(i)=1728$, and set $\mathcal J=1728t\circ\pi$ and
$Z^\circ=\mathfrak H_z\setminus\mathcal J^{-1}\{0,1728\}$.
After the elliptic orbits are removed, $j$ is an unramified modular
covering of $\CC\setminus\{0,1728\}$.  The fundamental group of
$Z^\circ$ is normally generated by small circles around the deleted
points.  Their $\mathcal J$-images are cubes of meridians about zero and fourth
powers of meridians about $1728$.  Since the corresponding modular deck
transformations have orders three and two, respectively, the
covering-space lifting criterion gives a lift
$\tau:Z^\circ\to\mathfrak H$ of $\mathcal J$.  In a bounded-disc model of
$\mathfrak H$, removability and the maximum principle extend the lift
across all deleted points.

For each $g\in\Gamma_{\mathrm{orb}}$, the maps $\tau\circ g$ and $\tau$ lift the
same function.  Thus $\tau(gz)=\phi(g)\tau(z)$ for a homomorphism
$\phi:\Gamma_{\mathrm{orb}}\to\operatorname{PSL}_2(\ZZ)$.  Normalize
$\tau(z_1)=\rho=e^{\pi i/3}$.  The map $\mathcal J$ has order three at
$z_1$, while $j$ has order three at $\rho$, so $\tau-\rho$ has
order one.  Comparing its rotation with
$s_1(g_1z)=e^{-2\pi i/3}s_1(z)$ forces
$\phi(g_1)=\sigma_1$, where
$\sigma_1(\tau)=(\tau-1)/\tau$.
At $z_2$, the function $\mathcal J-1728$ has order four whereas
$j-1728$ has order two at $i$.  Hence $\tau-\tau(z_2)$ has order
two.  In particular $\phi(g_2)\ne1$, for otherwise an invariant germ
would have order divisible by four.  Since the modular group has no
element of order four, $\phi(g_2)$ has order two.

To determine $P=\phi(g_0)$, take $R$ large.  Every component
of $\{|j|>R\}$ is contained in a horoball based at a rational cusp, and
distinct such horoballs are disjoint.  Indeed, this follows from the
standard modular fundamental domain together with
$\Im((a\tau+b)/(c\tau+d))=\Im\tau/|c\tau+d|^2$.
The chosen component $\widetilde U_0$ maps into one such horoball, so $P$
fixes the corresponding rational cusp.  It is not the identity.  Otherwise
$q=e^{2\pi i\tau}$ would descend to the punctured $t_c$-disc and
extend boundedly across zero.  The facts that $j\to\infty$ and
$|q(0)|\ne1$ would give $q(0)=0$, but the single-valued logarithm
$2\pi i\tau$ would force the winding number of $q$ to vanish.  Thus
$P$ is nontrivial parabolic.

Choose its trace-two lift in the form
\[
 P=I+k
 \begin{pmatrix}
 -pr&p^2\\-r^2&pr
 \end{pmatrix},
 \qquad \gcd(p,r)=1,\quad k\ne0.
\]
The element $\phi(g_2)=\sigma_1^{-1}P^{-1}$ has trace zero.  Substitution
gives $-k(p^2-pr+r^2)=1$.
Consequently $k=-1$ and
$(p,r)=\pm(1,0),\pm(0,1),\pm(1,1)$.  Conjugating by a power of $\sigma_1$,
which preserves the normalization at $z_1$, gives
$P(\tau)=\tau-1$ and $\phi(g_2)(\tau)=-1/\tau$.
Hence \eqref{eq:an-tau-laws} holds with the stated cusp normalization.

The Fourier expansion of $j$, together with
$j(\tau)=1728/t_c$, gives $q=e^{2\pi i\tau}=t_cu_1(t_c)$
for a holomorphic unit $u_1$.  After shrinking the cusp disc, choose a
holomorphic logarithm and set
$h(t_c)=(2\pi i)^{-1}\log u_1(t_c)$ and $s=\tau-h(t_c)$.
Then \eqref{eq:an-log-coordinate} holds.  After shrinking $U_0^\times$, the
maps from $\widetilde U_0$ and from a half-plane in the $s$-plane to
$U_0^\times$ are universal covers with the same deck generator; hence
$s$ is a logarithmic coordinate.  In particular,
$\Im\tau\to\infty$ at the cusp.

\smallskip
\noindent\emph{Construction of $\mu$.}
For $\mu$, differences of solutions
of \eqref{eq:an-mu-laws} obey $\nu(g_1z)=-\nu/\tau$ and
$\nu(g_2z)=\nu/\tau$.
The products of these factors around $g_1^3$ and $g_2^4$ are one, so
they extend to a factor of automorphy for $\Gamma_{\mathrm{orb}}$.  Let
$\mathcal L$ be the sheaf on $B$ whose local sections are holomorphic
functions satisfying the resulting equivariance law.  At the cusp, its
sections are required to be holomorphic in $t_c$.  We claim
$\mathcal L\simeq\cO_{\PP^1}(-p_0)$.
To see that $\mathcal L$ is locally free at $p_1,p_2$ and identify it,
one must account for the stabilizers.  On the simply connected cover the
meromorphic function
$F_{\mathcal L}=E_4(\tau)^2E_6(\tau)^{1/2}/
\Delta_{\mathrm{mod}}(\tau)$
is defined because the pullback $E_6(\tau(z))$ has zeros of even order
two over $p_2$.  The modular transformation laws give precisely the
preceding homogeneous factors, up to a possible sign character.  More
precisely, $F_{\mathcal L}(g_1z)=-F_{\mathcal L}(z)/\tau(z)$ and
$F_{\mathcal L}(g_2z)=F_{\mathcal L}(z)/\tau(z)$.
The relation $g_1^3=1$ fixes the sign in the first formula, while comparison
of the leading term in $s_2(g_2z)=-is_2(z)$ fixes the sign in the second.

Upstairs, $F_{\mathcal L}$ has orders two and one at $z_1,z_2$, and
at the cusp it is $t_c^{-1}$ times a unit.  If $\nu$ is a local
holomorphic section, the quotient $\nu/F_{\mathcal L}$ is invariant.  At
$z_1$ its order is at least $-2$ and divisible by three, while at
$z_2$ its order is at least $-1$ and divisible by four.  Thus
$\nu/F_{\mathcal L}$ descends to a function holomorphic at $p_1$ and
$p_2$.  At $p_0$, the
regularity condition is exactly
$\mathcal L_{p_0}=\mathfrak m_{p_0}F_{\mathcal L}$.  Thus
$\mathcal L\simeq\cO_{\PP^1}(-p_0)$.

The affine laws \eqref{eq:an-mu-laws} close after three and four
iterations:
\[
\begin{aligned}
(\tau,\mu)&\xmapsto{g_1}
 \left(\frac{\tau-1}{\tau},\frac{1-\mu}{\tau}\right)
 \xmapsto{g_1}
 \left(\frac{-1}{\tau-1},\frac{\tau-1+\mu}{\tau-1}\right)
 \xmapsto{g_1}(\tau,\mu),\\
(\tau,\mu)&\xmapsto{g_2}
 \left(\frac{-1}{\tau},1+\frac\mu\tau\right)
 \xmapsto{g_2}(\tau,1-\tau-\mu)
 \xmapsto{g_2}
 \left(\frac{-1}{\tau},\frac{1-\mu}{\tau}\right)
 \xmapsto{g_2}(\tau,\mu).
\end{aligned}
\]
Explicit local solutions are $\mu=(2-\tau)/3$ at $p_1$,
$\mu=(1-\tau)/2$ at $p_2$, and $\mu=0$ on $\widetilde U_0$.
At an ordinary point, choose a solution on one sheet and extend it
equivariantly.  The solution sheaf is a torsor under $\mathcal L$, and
$H^0(\PP^1,\cO(-1))=H^1(\PP^1,\cO(-1))=0$.
Thus the local solutions glue uniquely to $\mu$.

\smallskip
\noindent\emph{Construction of $\beta$.}
For $\beta$, put $\varphi_1=2-6(1-\mu)^2/\tau$ and
$\varphi_2=-3-6\mu^2/\tau$.
Substitution in \eqref{eq:an-tau-laws}--\eqref{eq:an-mu-laws} gives
\begin{equation}\label{eq:an-beta-zero-sums}
 \sum_{k=0}^{2}\varphi_1(g_1^kz)=0,
 \qquad
 \sum_{k=0}^{3}\varphi_2(g_2^kz)=0.
\end{equation}
Hence the affine laws \eqref{eq:an-beta-laws} close.  A local solution at
$p_j$ is $m_j^{-1}\sum_{k=0}^{m_j-1}k\varphi_j(g_j^kz)$,
because \eqref{eq:an-beta-zero-sums} makes its increment telescope to
$\varphi_j$.  At the cusp take $\beta=-\tau$, so that
$b_c=\beta+\tau$ is regular.  Differences of solutions form
$\cO_{\PP^1}$, and $H^1(\PP^1,\cO)=0$, so a global $\beta$ exists,
unique up to a constant.

\smallskip
\noindent\emph{Nondegeneracy.}
Finally, direct substitution shows that the function $\mathcal D$ in
\eqref{eq:an-D-negative} is
$\Gamma_{\mathrm{orb}}$-invariant.  It therefore descends to a continuous
function on $B\setminus\{p_0\}$, and at the cusp
$\mathcal D\leq \Im b_c-\Im\tau\to-\infty$.
It is bounded above off a small cusp disc.  Adding a sufficiently negative
imaginary constant to $\beta$ therefore gives
\eqref{eq:an-D-negative} everywhere.
\end{proof}